\newtheorem{theorem}{Theorem}[section]
\newtheorem{rem}[theorem]{Remark}
\newtheorem{prop}[theorem]{Proposition}
\newtheorem{lem}[theorem]{Lemma} 
\newcommand{\N}{\mathbb{N}}
\newcommand{\Z}{\mathbb{N}_0}
\newcommand{\R}{\mathbb{R}}
\newcommand{\Li}{L^\infty}
\newcommand{\Sc}{\mathcal{S}}         
\newcommand{\Scp}{\mathcal{S}^\prime} 
\newcommand{\tp}{t^\prime}
\newcommand{\Pa}{\partial}
\newcommand{\Pt}{\partial_t}
\newcommand{\bra}[1]{ \left<  #1 \right>  }
\newcommand{\braa}[1]{\left(  #1 \right)  }
\newcommand{\brab}[1]{\left\{ #1 \right\} }
\newcommand{\brac}[1]{\left[  #1 \right]  }
\newcommand{\abs}[1]{ \left|  #1 \right|  }
\newcommand{\nr}[1]{  \left\| #1 \right\| }
\newcommand{\n}{\mathcal{N}}
\newcommand{\s}{{\mbox{\boldmath $S$}}}
\newcommand{\aY}{\acute{Y}}
\newcommand{\wZ}{\widetilde{Z}}
\newcommand{\Dl}{D_\lambda}
\newcommand{\Gs}{\Gamma}
\newcommand{\mh}{\mathcal{H}}
\newcommand{\oi}{\omega^{-1}}
\newcommand{\vD}{\varDelta_z}
\newcommand{\li}{L^\infty H^1}
\newcommand{\wW}{\widetilde{W}}
\newcommand{\bbb}[1]{\bBigg@{#1}} 
\newcommand{\Miijj}[4]{
\begin{pmatrix}
   #1 & #2 \\
   #3 & #4
\end{pmatrix}}
\newcommand{\bib}{%

}
\begin{document}
\title[%
Inverse scattering problems 
for NLKG]{%
On inverse scattering  
for the two-dimensional nonlinear Klein-Gordon equation}

\author[H. Sasaki]{%
Hironobu Sasaki$^*$\\
Department of Mathematics and Informatics, 
Chiba University, 263--8522, Japan.}

\thanks{$^*$Supported by 
JSPS KAKENHI Grant Number 22K03367.}

\thanks{$^*$email:
\texttt{sasaki@math.s.chiba-u.ac.jp}} 
\subjclass[2000]{35P25, 35R30, 35G20}

\keywords{%
Nonlinear Klein-Gordon equation; 
Inverse scattering problem; 
G\^{a}teaux derivatives}

\maketitle

\begin{abstract}
The inverse scattering problem 
for the two-dimensional nonlinear Klein-Gordon equation 
$u_{tt}-\Delta u + u = \mathcal{N}(u)$
is studied. 
We assume that 
the unknown nonlinearity $\mathcal{N}$ 
of the equation 
satisfies 
$\mathcal{N}\in C^\infty(\mathbb{R};\mathbb{R})$, 
$\mathcal{N}^{(k)}(y)=O(|y|^{\max\{ 3-k,0 \}})$ 
($y \to 0$)  
and 
$\mathcal{N}^{(k)}(y)=O(e^{c y^2})$ 
($|y| \to \infty$)
for any $k=0,1,2,\cdots$.
Here,
$c$ is a positive constant.
We establish a reconstraction formula of 
$\mathcal{N}^{(k)}(0)$
($k=3,4,5,\cdots$) 
by the knowledge of the scattering operator 
for the equation.
As an application,
we also give an expression for 
higher order G\^{a}teaux differentials
of the scattering operator at 0.
\end{abstract}

\section{Introduction}
We consider 
the inverse scattering problem 
for the nonlinear Klein-Gordon equation 
\begin{align}\label{NLKG}
u_{tt}-\Delta u + u = \n(u)
\end{align} 
in the two space dimensions. 
Here, 
$u=u(t,x)$ is a $\R$-valued unknown function 
defined on $\R \times \R^2$,
$\Delta=\Pa_1^2+\Pa_2^2$ is the Laplacian in $x$,
$\Pa_j=\Pa/\Pa x_j$ 
($j=1,2$) 
and $\n(u)$ is the nonlinearity. 
More precisely,
throughout this paper we assume 
$\n\in C^\infty(\R;\R)$, 
\begin{align}\label{N1}
\n^{(k)}(y)
=
O\braa{
|y|^{\max\{ 3-k,0 \}}
}
\quad
(y\to 0),
\quad
k\in \Z
\end{align} 
and 
\begin{align}\label{N2}
\n^{(k)}(y)
=
O\braa{
e^{c y^2}
}
\quad
(|y|\to \infty),
\quad
k\in \Z,
\end{align} 
where
$\Z=\N \cup \{ 0 \}$
and 
$c$ is a positive constant.
The following functions 
$\n_P$ and $\n_{c_1,c_2}$ are 
examples of $\n$:
\begin{enumerate}[(Ex.1)]
  \item 
For polynomial $P(y)$, 
we put $\n_P(y)=P(y)y^3$.
  \item 
For constants $c_1,c_2 \in \R$,
we put $\n_{c_1,c_2}(y)=c_1(e^{c_2 y^2}-1)y$.
\end{enumerate}
Our aim of this paper 
is to identify the unknown nonlinearity $\n$ 
by the knowledge of the scattering operator 
$\s$ for (\ref{NLKG}).

\subsection{Direct scattering}
Before we consider inverse scattering problems,
we first mention direct problems.
As for the existence of $\s$, 
Nakamura and Ozawa 
\cite{NakamuraOzawa2001} 
first studied 
the Klein-Gordon equation 
with exponential-type nonlinearity, 
and suggests that
if $\n$ satisfies (\ref{N1}) and (\ref{N2}) 
then $\s$ is defined on a neighborhood 
in the energy space 
$\mh:=H^1(\R^2) \oplus L^2(\R^2)$ of 0. 
In \cite{NakamuraOzawa2001},
it was also indicated that 
$\mh$ seems to be the critical space for (\ref{NLKG}) 
if $\n(u)$ behaves like $e^{c |u|^2}$  
as $|u|\to\infty$.
If $\n=\n_{c_1,c_2}$ with $c_1>0$, 
then 
$\mh$ is actually the critical space 
(see, e.g., 
\cite{IbrahimMajdoubMasmoudi2006,
IbrahimMajdoubMasmoudiNakanishi2009,
IbrahimMasmoudiNakanishi2009}).
As for 
dispersive equation with exponential-type nonlinearity,
see also 
\cite{IkedaInuiOkamoto2020,
NakamuraOzawa1998,
NakamuraOzawa1999,
Sasaki2024}.

In order to state 
the existence theorem for $\s$ precisely,
we introduce some notation and proposition.
For $p\in [1,\infty]$ and $s\in \R$, 
we denote 
the Lebesgue space $L^p(\R^2)$,
its norm, 
the Sobolev space $H^s(\R^2)$ 
and 
the inhomogeneous Besov space $B^s_{p,2}(\R^2)$
by $L^p$, $\nr{\cdot}_p$, $H^s$ and $B^s_p$,
respectively. 
For the definition of $B^s_p$, 
see, e.g., \cite{BL}.
Put 
$\bra{x}=\sqrt{1+|x|^2}$ ($x\in \R^2$) 
and 
$\omega=\sqrt{1-\Delta}$.
For $a,b\in \R$,
we set 
$a\vee   b=\max\{ a,b \}$,
$a\wedge b=\min\{ a,b \}$ 
and $a_+=a \vee 0$.
For $s\in \R$,
the $H^s$-norm is given 
by $\nr{f}_{H^s}=\nr{\omega^s f}_2$. 
Let $\nr{\cdot}$ be 
the norm of $\mh$ defined by 
\begin{align*}
\nr{\phi}
=
\nr{f}_{H^1}\vee\nr{g}_2,
\quad
\phi=\binom{f}{g} \in \mh.
\end{align*} 
By $\Sc$ and $\Scp$, 
we denote the Schwartz space $\Sc(\R^2)$ 
and the set of all tempered distributions on $\R^2$, 
respectively. 
For a Banach space $\mathcal{X}$, 
$p\in [1,\infty]$ and $k\in \Z$, 
we define $L^p \mathcal{X} =L^p(\R;\mathcal{X})$, 
and $C^k \mathcal{X}$ denotes the set of all functions  
which are defined on $\R$ 
and $k$-th order continuously differentiable 
in the $\mathcal{X}$-sense. 
Moreover, 
we put the closed ball 
\begin{align*}
B_\eta \mathcal{X} =\brab{
f\in \mathcal{X}; \nr{f}_{\mathcal{X}} \le \eta
},
\quad
\eta>0
\end{align*}  
and the function space 
\begin{align*}
C_b^k \mathcal{X} = \brab{
f\in C^k \mathcal{X}; 
\frac{\Pa^\ell}{\Pa t^\ell} f \in \Li \mathcal{X}, 
\ \ell=0,\cdots,k
}.
\end{align*} 
By $\Pt$, 
we denote the time derivative operator in the $L^2$-sense.
For $v\in C^1 L^2$ and $t\in \R$,
we set $(1,\Pt)v(t)=(v(t),\Pt v(t))$.
Define 
$\aY= L^{4/3}B^{1/2}_{4/3}$ 
and 
$Z=C_b^0 H^1 \cap C_b^1 L^2 \cap L^4B^{1/2}_4$.
Furthermore, 
we give 
\begin{align*}
\nr{v}_Z
=
     \nr{    v}_{\li} 
\vee \nr{    v}_{L^4B^{1/2}_4} 
\vee \nr{\Pt v}_{\Li L^2},
\quad
v\in Z. 
\end{align*} 
Put 
\begin{align*}
w_\phi(t)
=\cos(t\omega) f +\frac{\sin(t\omega)}{\omega}g, 
\quad \phi=\binom{f}{g} 
\in \Scp\oplus\Scp, \ t\in \R.
\end{align*}
Then $w_\phi$ is the free solution satisfying 
$w_\phi(0)=f$ and $\Pt w_\phi(0)=g$. 
In other words, 
$w_\phi$ solves the free Klein-Gordon equation 
$w_{tt}-\Delta w + w = 0$ with initial conditions 
$w(0,x)=f(x)$ and $w_t(0,x)=g(x)$.
For $G\in \Scp(\R^{1+2}_{(t,x)})$,
we define the function $\Gs G$ by 
\begin{align*}
(\Gs G)(t,x)
=
\int_{-\infty}^t 
\frac{\sin(t-\tp)\omega}{\omega}
G(\tp,x)d\tp
\end{align*} 
if the right hand side makes sense.
Then we formally see that  
$v:=\Gs G$ solves the equation 
$v_{tt}-\Delta v + v = G$ 
with initial conditions 
$v(0,x)=v_t(0,x)=0$.
We remark that 
$\Gs$ becomes a bounded linear operator 
from $\aY$ to $Z$. 
For detail, 
see Proposition \ref{prop:St} below.
For $a,b \ge 0$ and 
for parameters 
$p_1,\cdots,p_m$,
we write $a\lesssim b$ 
(resp. $a\lesssim_{p_1,\cdots,p_m} b$)
if we have 
the inequality $a \le C b$ 
with a positive constant $C$ 
dependent at most on $\n$
(resp. $\n,p_1,\cdots,p_m$).

The existence of the scattering operator 
for (\ref{NLKG}) is given by 
\cite{NakamuraOzawa2001}. 
The detailed statement is as follows:

\begin{prop}\label{prop:S}
Some $\eta_0\in (0,1]$ satisfies that 
for any $\phi_-\in B_{\eta_0}\mh$, 
there exist a unique time-global solution $u \in Z$
to the integral equation of the form 
\begin{align*}
u(t)=w_{\phi_-}(t)+\Gs\n(u), 
\quad t\in \R ,
\end{align*}
and a unique function $\phi_+ \in \mh$, 
which satisfies 
\begin{align*}
\phi_+
=
\phi_-
+
\int_{\R}
\binom{
-\oi \sin(t\omega)
}{
 \cos(t\omega)    
}
\n(u(t))
dt,
\end{align*}
such that 
$\nr{\n(u)}_{\aY} \lesssim \nr{\phi_-}^3$,
\begin{align*}
\lim_{t\to -\infty}
 \nr{
 (1,\Pt) u(t)
 -
 (1,\Pt) w_{\phi_-}(t)
 }
=0
\end{align*} 
and 
\begin{align*}
\lim_{t\to +\infty}
 \nr{
 (1,\Pt) u(t)
 -
 (1,\Pt) w_{\phi_+}(t)
 } 
=0.
\end{align*} 
Hence the scattering operator 
\begin{align*}
\s: B_{\eta_0}\mh 
\ni \phi_-\mapsto \phi_+
\in \mh
\end{align*} 
for (\ref{NLKG}) is well-defined.
\end{prop}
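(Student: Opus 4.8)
The plan is to solve the integral equation by a contraction-mapping argument and then to read off the asymptotic states from the time-integrability that $\n(u)$ enjoys in $\aY$; the statement is in substance that of \cite{NakamuraOzawa2001}. The linear input is Proposition~\ref{prop:St}: the homogeneous bound $\nr{w_\phi}_Z\lesssim\nr{\phi}$, the retarded bound $\nr{\Gs G}_Z\lesssim\nr{G}_{\aY}$, and --- by duality with the homogeneous Strichartz estimate at the $H^1$-level (admissible pair $(4,4)$ with $1/2$ derivatives, dual exponents $(4/3,4/3)$) --- the boundedness of $G\mapsto\int_\R\binom{-\oi\sin(t\omega)}{\cos(t\omega)}G(t)\,dt$ from $\aY$ into $\mh$. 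I will also use the elementary uniform bound $\sup_{t\in\R}\nr{(1,\Pt)w_\phi(t)}\lesssim\nr{\phi}$, immediate from the definitions of $w_\phi$ and of $\nr{\cdot}$.

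The core of the argument is the nonlinear estimate
\begin{align*}
\nr{\n(u)}_{\aY}\lesssim\nr{u}_Z^3,\qquad
\nr{\n(u)-\n(v)}_{\aY}\lesssim\braa{\nr{u}_Z^2+\nr{v}_Z^2}\nr{u-v}_Z
\end{align*}
for $u,v$ in a sufficiently small ball of $Z$. Using Taylor's formula together with $(\ref{N1})$ I split $\n(y)=\tfrac{1}{6}\n'''(0)y^3+R(y)$, where $R^{(k)}(y)=O(|y|^{(4-k)_+})$ as $y\to 0$ and, by $(\ref{N2})$, $R^{(k)}(y)=O(e^{c'y^2})$ as $|y|\to\infty$ for some $c'>0$. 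The cubic term is handled by the fractional Leibniz rule in $B^{1/2}_{4/3}$ and H\"older in space and time: its $\aY$-norm is $\lesssim\nr{u}_{L^4B^{1/2}_4}^3\lesssim\nr{u}_Z^3$, using the embedding $B^{1/2}_4\hookrightarrow L^r$ for every finite $r$. For the remainder I use the fractional chain rule in the form $\nr{R(u(t))}_{B^{1/2}_{4/3}}\lesssim\nr{R'(u(t))}_{L^2}\nr{u(t)}_{B^{1/2}_4}$, bound $\nr{R'(u(t))}_{L^2}$ by $\nr{u(t)}_{L^6}^3$ plus an exponential correction, and control the latter --- once $\nr{u(t)}_{H^1}$ is below a fixed threshold --- by the two-dimensional Trudinger--Moser inequality (of the form $\int_{\R^2}(e^{\alpha h^2}-1)\,dx\lesssim\nr{h}_2^2$); this threshold is where the smallness constant $\eta_0$ originates. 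Distributing the resulting powers of $u(t)$ over the time-Lebesgue norms, interpolating between $L^4B^{1/2}_4$ and $L^\infty H^1$ (both controlled by $\nr{\cdot}_Z$) and using $H^1\hookrightarrow B^{1/2}_4\hookrightarrow L^r$, yields $\nr{R(u)}_{\aY}\lesssim\nr{u}_Z^3$ on the small ball. The Lipschitz estimate is obtained in the same way after writing $\n(u)-\n(v)=\braa{\int_0^1\n'(v+\theta(u-v))\,d\theta}(u-v)$ and noting that the bracket vanishes to second order in $(u,v)$ at the origin.

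Set $\Phi(u):=w_{\phi_-}+\Gs\n(u)$. The estimates above show that for $\nr{\phi_-}\le\eta_0$ with $\eta_0$ small and for a radius $\rho\simeq\eta_0$ lying below the Trudinger--Moser threshold, $\Phi$ maps $B_\rho Z$ into itself and is a contraction there; its unique fixed point is the solution $u\in Z$, and the fixed-point bound immediately gives $\nr{\n(u)}_{\aY}\lesssim\nr{\phi_-}^3$. For the asymptotic states, expanding $\sin((t-\tp)\omega)$ in the definition of $\Gs$ rewrites the integral equation as $(1,\Pt)u(t)=(1,\Pt)w_{\psi(t)}(t)$, where
\begin{align*}
\psi(t)=\phi_-+\int_{-\infty}^{t}\binom{-\oi\sin(\tp\omega)}{\cos(\tp\omega)}\n(u(\tp))\,d\tp .
\end{align*}
Since $\n(u)\in\aY$, applying the $\aY\to\mh$ bound of the first paragraph to the $\aY$-tails of this integral, together with the Cauchy criterion, shows that $\psi(t)$ converges in $\mh$ to $\phi_-$ as $t\to-\infty$ and to the $\phi_+$ of the statement as $t\to+\infty$; combined with $\nr{(1,\Pt)w_\phi(t)}\lesssim\nr{\phi}$ and the linearity of $\phi\mapsto(1,\Pt)w_\phi(t)$, this gives the two limits claimed, and uniqueness of $\phi_+$ follows from uniqueness of limits. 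Hence $\s$ is well-defined on $B_{\eta_0}\mh$.

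The delicate step is the nonlinear estimate: controlling the exponentially growing part of $\n(u)$ in the Besov norm of $\aY$ forces one to combine a fractional chain rule for smooth nonlinearities of exponential type with the sharp two-dimensional Trudinger--Moser inequality, and it is precisely the breakdown of this estimate in the absence of a smallness hypothesis that confines $\s$ to a neighbourhood of $0$ in the energy space $\mh$.
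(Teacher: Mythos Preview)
The paper does not prove Proposition~\ref{prop:S}: it is stated as a quotation of \cite{NakamuraOzawa2001} (see the sentence immediately preceding the proposition), and no argument is supplied. Your plan is therefore not being compared against a proof in the paper but against the literature result it invokes, and as a sketch of that result it is sound: Strichartz (Proposition~\ref{prop:St}), a cubic-type nonlinear estimate in $\aY$, contraction in a small ball of $Z$, and then reading off $\phi_\pm$ from the $\aY$-integrability of $\n(u)$ is exactly the standard route.

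One point worth noting is that your treatment of the nonlinear estimate differs from what the paper itself later develops. You split $\n=\text{cubic}+R$ and appeal to a fractional Leibniz/chain rule together with Trudinger--Moser. The paper, in Proposition~\ref{prop:N} and its proof, instead expands $|\n^{(k)}(y)|\le c_k|y|^{(3-k)_+}e^{c_ky^2}$ as a power series and works term by term through the difference characterisation of $B^{1/2}_{4/3}$ (Lemma~\ref{lem:prop:N}), using the growth $\nr{f}_r\lesssim\sqrt{r}\,\nr{f}_{B^{1/2}_4}$ to sum the series for $\nr{v}_{\li}$ small. Both methods deliver $\nr{\n(u)}_{\aY}\lesssim\nr{u}_Z^3$ on a small ball; the paper's version has the advantage of giving, in one stroke and with the same machinery, the higher-order estimates $\nr{\n^{(k)}(v)\prod\Gs G_\ell}_{\aY}$ that are needed in Section~3, whereas your fractional-chain-rule approach is more ad hoc but perfectly adequate for Proposition~\ref{prop:S} alone.
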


Functions $\phi_-$ and $\phi_+:=\s \phi_-$ 
in the above proposition 
are called input data and output data,
respectively.

\subsection{Inverse scattering and main results}
The inverse scattering problem (ISP)
for nonlinear dispersive equations is 
to identify unknown nonlinearities  
by the knowledge of scattering states 
(see, e.g., %
\cite{%
BarretoUhlmannWang2022,
ChenMurphy-pre,
ChenMurphy-pre-2,
HoganMurphyGrow2023,
HuKillipVisan-pre,
KillipMurphyVisan2023,
MSSSS2018,
MorawetzStrauss1973,
Sasaki2007,
Sasaki2008,
Sasaki2012,
Sasaki2024-2,
SasakiSuzuki2011,
SasakiWatanabe2005,
Strauss1974,
Watanabe2002,
Weder1997,
Weder2000,
Weder2000-2,
Weder2001,
Weder2001-2,
Weder2002,
Weder2005-1,
Weder2005-2}
and references therein).

In the present paper, 
we uniquely determine 
the exact value of $\n^{(N)}(0)$ 
($N=3,4,\cdots$).
Remark that 
by Assumption (\ref{N1}) 
we already obtain $\n^{(N)}(0)=0$ 
if $N=0,1,2$.
In the case of $N=3,4$, 
the following ``expansion" of $\s$ seems to be useful:
\begin{align}\label{SAL}
\bra{
\s(\lambda\phi) - \lambda\phi,
\Miijj{0}{-\omega^{-2}}{1}{0}\phi
}_{\mh}
=
\int_{\R^{1+2}}
\n(\lambda w_\phi)w_\phi
d(t,x)
+
O(\lambda^5)
\quad
(\lambda\to +0),
\end{align} 
where 
$\phi \in \mh$ and 
$\bra{\cdot,\cdot}_{\mh}$
is the inner product on $\mh$ 
given by 
\begin{align*}
\bra{\phi_1,\phi_2}_{\mh}
=
\bra{\omega f_1,\omega f_2}_{L^2}
+
\bra{g_1,g_2}_{L^2},
\quad
\phi_j=\binom{f_j}{g_j} \in \mh, \ j=1,2.
\end{align*} 
Indeed, 
we formally obtain 
\begin{align}\label{f:3-4}
\n^{(N)}(0)
=
\frac{
\displaystyle{
\lim_{\lambda\to +0}
\frac{d^N}{d \lambda^N}
\bra{
\s(\lambda\phi) - \lambda\phi,
\Miijj{0}{-\omega^{-2}}{1}{0}\phi
}_{\mh}
}
}{
\displaystyle{
\int_{\R^{1+2}}
w_\phi(t,x)^{N+1}
d(t,x)
}
}
\end{align}
if $N=3,4$ 
and 
the denominator is not zero.
Formula (\ref{SAL}) is 
a kind of 
the small amplitude limit.
In the study of 
inverse scattering problems 
for nonlinear dispersive equations,
the small amplitude limit is the most essential tool 
(see, e.g., 
\cite{%
MorawetzStrauss1973,
Sasaki2007,
Sasaki2012,
SasakiWatanabe2005,
Strauss1974,
Weder2000,
Weder2002}).

In the case of $N\ge 5$, 
unfortunately,
we can not expect that 
(\ref{f:3-4}) holds true 
because of the term $O(\lambda^5)$ in (\ref{SAL}).
In the present paper,
by applying the method of \cite{Sasaki2024-2}
we establish 
a higher-order expansion of $\s$
whose remainder term is $O(\lambda^{N+1})$, 
and show that 
the exact value of $\n^{(N)}(0)$ 
is uniquely determined. 
In \cite{Sasaki2024-2},  
we consider inverse scattering problems 
for the two-dimensional nonlinear Schr\"{o}dinger equation 
$iu_t+\Delta u= \mathcal{M}(u)$, 
where $\mathcal{M}$ is an unknown complex function 
satisfying some assumptions 
similar to (\ref{N1}) and (\ref{N2}), 
and we determine 
higher order Wirtinger derivatives 
of $\mathcal{M}$ at the origin.
\\

To state main results precisely,
we list some notation.
For a function $h:(0,\infty)\to \R$ 
and for $\lambda>0$
we put 
\begin{align*}
\varDelta_\lambda^n h(\lambda)
=
\sum_{m=0}^n \binom{n}{m} 
(-1)^{n-m} h((m+1)\lambda),
\quad
n\in \N.
\end{align*} 
Let $\eta_0$ and $\s$ be 
the positive number and the mapping
appearing in Proposition \ref{prop:S},
respectively.
For $\phi \in \mh$ and $\lambda>0$ 
with $\lambda \phi \in B_{\eta_0}\mh$,
we set 
\begin{align}\label{def:K}
K^{\phi,\lambda}
&=
\bra{
\s(\lambda\phi) - \lambda\phi,
\Miijj{0}{-\omega^{-2}}{1}{0}\phi
}_{\mh}.
\end{align} 
We remark that 
if $n\ge 2$ then 
\begin{align*}
\varDelta_\lambda^n
K^{\phi,\lambda}
&=
\varDelta_\lambda^n
\bra{
\s(\lambda\phi),
\Miijj{0}{-\omega^{-2}}{1}{0}\phi
}_{\mh}.
\end{align*} 
For $\phi\in \mh$ and $y_3,y_4 \in \R$,
we set functions 
$\wW^\phi_3[y_3]=y_3 w_\phi^3$ 
and
$\wW^\phi_4[y_3,y_4]=y_4 w_\phi^4$. 
We also define inductively 
for $\phi\in \mh$, 
$N=5,6,\cdots$,
and $y_3,y_4,\cdots, y_N \in \R$,
\begin{align}\label{def:W}
W^\phi_N[y_3,\cdots,y_{N-2}]
&=
\sum_{k=1}^{k_N}
\frac{N!}{k!}
\sum_{
 \substack{ 
 N_0+N_1+\cdots+N_k=N
 \\ 
 N_0\ge (3-k)_+
 \\
 N_1,\cdots,N_k\ge 3
 }
}
\frac{y_{N_0+k} w_\phi^{N_0} }{N_0!}
\prod_{\ell=1}^k
\frac{
\Gs
\widetilde{W}^\phi_{N_\ell}[y_3,\cdots,y_{N_\ell}]
}{ N_\ell ! },
\\
\wW^\phi_N[y_3,\cdots,y_N]
&=
y_N w_\phi^N
+
W^\phi_N[y_3,\cdots,y_{N-2}],
\nonumber
\end{align} 
where 
$k_N=\lfloor (N-3)/ 2 \rfloor \wedge \lfloor N/3 \rfloor$, 
which is the largest natural number $k$ 
such that  
$(3-k)_+ + 3k \le N$, 
and 
$\lfloor\,\cdot \,\rfloor$ is the floor function.
Remark that the above 
$W^\phi_N[y_3,\cdots,y_{N-2}]$
is actually defined 
only by $\phi,N$ and $y_3,\cdots,y_{N-2}$. 
Indeed, 
$N_\ell$ and $N_0+k$ in (\ref{def:W}) 
are not larger than $N-2$ since 
\begin{align*}
N-N_\ell
&=
N_0+N_1+\cdots+N_k-N_\ell
\ge 
(3-k)_+ + 3(k-1) \ge 2,
\\
N-(N_0+k)
&=
N_1+\cdots+N_k -k
\ge 
3k-k \ge 2.
\end{align*}

\begin{rem}\label{rem:def W}
\begin{enumerate}[(1)]
  \item 
We see from the proof 
of Proposition \ref{prop:N} below that 
all of functions 
$\widetilde{W}^\phi_N[y_3,\cdots,y_N]$ 
and 
$W^\phi_N[y_3,\cdots,y_{N-2}]$  
belong to $C^0 L^2 \cap \aY$.
  \item 
For example, 
we have
\begin{align*}
W^\phi_5[y_3]
=
10y_3^2 w_\phi^2 \Gs w_\phi^3,
\quad
W^\phi_6[y_3,y_4]
=
y_3y_4
\braa{
20 w_\phi^3 \Gs w_\phi^3
+
15 w_\phi^2 \Gs w_\phi^4
}
\end{align*}
and 
\begin{align*}
W^\phi_7[y_3,y_4,y_5]
&=
y_3^3 \braa{
210 w_\phi^2 \Gs \braa{ w_\phi^2 \Gs w_\phi^3}
+
70 w_\phi        \braa{ \Gs w_\phi^3}^2 
}
+
35 y_4^2 
w_\phi^3 \Gs w_\phi^4
\\
&\quad +
y_3y_5 \braa{
35 w_\phi^4 \Gs w_\phi^3 
+
21 w_\phi^2 \Gs w_\phi^5
}.
\end{align*} 
\end{enumerate}
\end{rem}

We are ready to state our main results.
\begin{theorem}\label{thm:main}
Put $N\in \N$ with $N\ge 3$.
Choose $\phi \in \mh$ so that
\begin{align*}
\int_{\R^{1+2}}
w_\phi^{N+1}
d(t,x)
\neq 0.
\end{align*} 
\begin{enumerate}[(1)]
  \item 
Assume $N=3,4$.
Then 
there exists some positive number 
$\Lambda_N$ such that  
for any 
$\lambda \in (0, \Lambda_N)$,
\begin{align*}
 \abs{
\n^{(N)}(0)
-
\frac{
\displaystyle{
\lambda^{-N} \varDelta_\lambda^N
K^{\phi,\lambda}
}
}{
\displaystyle{
\int_{\R^{1+2}}
w_\phi^{N+1}
d(t,x)
}
}
 }
\lesssim_{\phi}
\lambda.
\end{align*}
In particular, 
the unknown value of $\n^{(N)}(0)$ 
is uniquely determined by  
\begin{align*}
\n^{(N)}(0)
=
\frac{
\displaystyle{
\lim_{\lambda\to +0}
\lambda^{-N} \varDelta_\lambda^N
K^{\phi,\lambda}
}
}{
\displaystyle{
\int_{\R^{1+2}}
w_\phi^{N+1}
d(t,x)
}
}.
\end{align*} 
  \item 
Assume $N\ge 5$ and that 
we have already known the values of 
$\n^{(3)}(0),\cdots,\n^{(N-2)}(0)$. 
Then 
there exists some positive number 
$\Lambda_N$ such that  
for any 
$\lambda \in (0, \Lambda_N)$,
\begin{align*}
 \abs{
\n^{(N)}(0)
-
\frac{
\displaystyle{
\lambda^{-N} \varDelta_\lambda^N
K^{\phi,\lambda}
-
\int_{\R^{1+2}}
W^\phi_N
\brac{
\n^{(3)}(0),\cdots,\n^{(N-2)}(0)
}
w_\phi
d(t,x)
}
}{
\displaystyle{
\int_{\R^{1+2}}
w_\phi^{N+1}
d(t,x)
}
}
 }
\lesssim_{N,\phi}
\lambda.
\end{align*} 
In particular, 
the unknown value of $\n^{(N)}(0)$ 
is uniquely determined by  
\begin{align*}
&\n^{(N)}(0)
=
\frac{
\displaystyle{
\lim_{\lambda\to +0}
\lambda^{-N} \varDelta_\lambda^N
K^{\phi,\lambda}
-
\int_{\R^{1+2}}
W^\phi_N
\brac{
\n^{(3)}(0),\cdots,\n^{(N-2)}(0)
}
w_\phi
d(t,x)
}
}{
\displaystyle{
\int_{\R^{1+2}}
w_\phi^{N+1}
d(t,x)
}
}.
\end{align*}
\end{enumerate} 
\end{theorem}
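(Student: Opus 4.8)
The plan is to expand the solution $u$ to the integral equation in powers of the amplitude $\lambda$ and track the corresponding expansion of $\n(u)$, then test against the free solution $w_\phi$ using the pairing in \eqref{def:K}. First I would establish a Taylor-type expansion for $u = u^{\phi,\lambda}$: writing $u = \lambda w_\phi + \sum_{n\ge 3} \lambda^n U_n^\phi + (\text{remainder})$, one plugs this ansatz into $u = w_{\lambda\phi} + \Gs\n(u)$, uses \eqref{N1} to expand $\n(u)$ as a sum $\sum_{k\ge 3} \frac{\n^{(k)}(0)}{k!} u^k + (\text{higher-order remainder})$, and matches powers of $\lambda$. The combinatorics here are exactly what is encoded in the definition \eqref{def:W}: the functions $\widetilde W^\phi_N[\n^{(3)}(0),\dots,\n^{(N)}(0)]$ are precisely the coefficient of $\lambda^N$ in the expansion of $\n(u^{\phi,\lambda})$, with the nested $\Gs$'s arising from the recursive substitution of lower-order pieces of $u$ and the multinomial sum over $N_0 + N_1 + \cdots + N_k = N$ coming from the $k$-th Taylor coefficient $\n^{(N_0+k)}(0)$ acting on a product of one power $w_\phi^{N_0}$ and $k$ Duhamel terms. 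The constraints $N_0 \ge (3-k)_+$ and $N_j \ge 3$ reflect that each factor must itself be an admissible term in the expansion (the free part can be absent only when $k\ge 3$, since $\n$ vanishes to third order). I would prove this expansion by induction on $N$, using Proposition~\ref{prop:St} to bound each $\Gs$ in $Z$ and the algebra/multiplication properties of the Besov spaces $B^{1/2}_4$ and $B^{1/2}_{4/3}$ together with the exponential bound \eqref{N2} to control the remainder uniformly for $\lambda$ in a small interval; this is the content of Proposition~\ref{prop:N} referenced in Remark~\ref{rem:def W}.

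Next I would translate this into an expansion of $K^{\phi,\lambda}$. Using the formula for $\phi_+ = \s(\lambda\phi)$ in Proposition~\ref{prop:S}, a direct computation shows
\begin{align*}
K^{\phi,\lambda}
=
\int_{\R^{1+2}} \n(u^{\phi,\lambda})(t,x)\, w_\phi(t,x)\, d(t,x),
\end{align*}
since the operator $\bigl(\begin{smallmatrix} 0 & -\omega^{-2} \\ 1 & 0\end{smallmatrix}\bigr)$ applied to $\phi$ and paired in $\mh$ exactly reproduces the time-integral against $w_\phi$ (this is the rigorous version of \eqref{SAL}). Substituting the expansion $\n(u^{\phi,\lambda}) = \sum_{N\ge 3} \lambda^N \widetilde W^\phi_N[\n^{(3)}(0),\dots,\n^{(N)}(0)] + O(\lambda^{N+1})$ and integrating against $w_\phi$ gives
\begin{align*}
K^{\phi,\lambda}
=
\sum_{j=3}^{N} \lambda^j \int_{\R^{1+2}} \widetilde W^\phi_j\bigl[\n^{(3)}(0),\dots,\n^{(j)}(0)\bigr] w_\phi\, d(t,x)
+ O(\lambda^{N+1}).
\end{align*}
Splitting $\widetilde W^\phi_N = \n^{(N)}(0)\, w_\phi^N / N! \cdot$(appropriate constant; in fact $\widetilde W^\phi_N[y_3,\dots,y_N] = y_N w_\phi^N + W^\phi_N[y_3,\dots,y_{N-2}]$ by \eqref{def:W}, so the top coefficient is exactly $\n^{(N)}(0)$) isolates the unknown: the $\lambda^N$-coefficient of $K^{\phi,\lambda}$ is $\n^{(N)}(0)\int w_\phi^{N+1} + \int W^\phi_N[\n^{(3)}(0),\dots,\n^{(N-2)}(0)] w_\phi$, where all data in the second integral is either known or, for $N=3,4$, simply absent (the sum in \eqref{def:W} is empty when $N<5$).

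The final step is to extract the $\lambda^N$-coefficient by the finite-difference operator $\varDelta_\lambda^N$. The key elementary fact is that for a function of the form $h(\lambda) = \sum_{j=3}^{N} a_j \lambda^j + O(\lambda^{N+1})$ one has $\varDelta_\lambda^N h(\lambda) = N!\, a_N \lambda^N + O(\lambda^{N+1})$: indeed $\varDelta_\lambda^N$ annihilates any polynomial in $\lambda$ of degree $< N$, acts on $\lambda^N$ by multiplication by $N!$, and on the $O(\lambda^{N+1})$ remainder produces an $O(\lambda^{N+1})$ error (each of the $2^N$ terms $h((m+1)\lambda)$ contributes $O(\lambda^{N+1})$ with constants depending only on $N$). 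Applying this to $h(\lambda) = K^{\phi,\lambda}$, dividing by $\lambda^N \int w_\phi^{N+1}$, and rearranging yields exactly the claimed bounds in parts (1) and (2), with the remark $\varDelta_\lambda^n K^{\phi,\lambda} = \varDelta_\lambda^n \langle \s(\lambda\phi), \cdots\rangle_{\mh}$ for $n\ge 2$ handling the linear term $-\lambda\phi$ (which $\varDelta_\lambda^N$ kills for $N\ge 2$). Letting $\lambda\to+0$ gives the reconstruction formulas.

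I expect the main obstacle to be the rigorous justification of the expansion of $u^{\phi,\lambda}$ and the uniformity of the remainder estimate $O(\lambda^{N+1})$ in the relevant norms. Because the nonlinearity has exponential growth \eqref{N2} rather than polynomial, one cannot simply iterate polynomial estimates; instead one must exploit that $w_\phi \in L^4 B^{1/2}_4 \subset$ (Trudinger–Moser-type space) so that $e^{c w_\phi^2}$-type quantities are controlled on bounded balls, and carefully combine this with the contraction-mapping setup of Proposition~\ref{prop:S} to get quantitative control of $u^{\phi,\lambda} - (\text{truncated expansion})$ of order $\lambda^{N+1}$ in $\aY$ (so that pairing with $w_\phi \in L^4 B^{1/2}_4$, dual to $\aY = L^{4/3}B^{1/2}_{4/3}$, is legitimate). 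The bookkeeping in \eqref{def:W} — verifying that the inductive definition really reproduces the $\lambda^N$-Taylor coefficient of $\n(u^{\phi,\lambda})$, including all multinomial constants $N!/(k!\, N_0!\prod N_\ell!)$ — is the other technically delicate point, though it is mechanical once the right induction hypothesis is set up.
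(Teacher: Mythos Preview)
Your plan is on the right track and the overall architecture matches the paper's: expand $\n(u_{\lambda\phi})$ to order $\lambda^N$ in $\aY$, pair with $w_\phi$ to get an expansion of $K^{\phi,\lambda}$, and extract the top coefficient via $\varDelta_\lambda^N$. Two points deserve comment.

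First, a normalization slip: $\widetilde W^\phi_N[\n^{(3)}(0),\dots,\n^{(N)}(0)]$ is not the $\lambda^N$ Taylor \emph{coefficient} of $\n(u^{\phi,\lambda})$ but $N!$ times it (equivalently, it is the $N$-th $\lambda$-derivative at $\lambda=0$; cf.\ Proposition~\ref{prop:A:2}(D-iii)). For instance $\widetilde W^\phi_3 = \n^{(3)}(0)w_\phi^3$, whereas the $\lambda^3$-coefficient of $\n(\lambda w_\phi)$ is $\tfrac{1}{6}\n^{(3)}(0)w_\phi^3$. With the correct expansion $K^{\phi,\lambda}=\sum_{j=3}^N\frac{\lambda^j}{j!}\int \widetilde W^\phi_j\, w_\phi + O(\lambda^{N+1})$ the factor $N!$ produced by $\varDelta_\lambda^N$ on $\lambda^N$ cancels the $1/N!$ and the theorem's formula comes out as stated; your version as written would be off by a factor of $N!$.

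Second, the paper circumvents the obstacle you flag (rigorous $\lambda$-smoothness of the full solution) by a different device: rather than expanding $u^{\phi,\lambda}$ itself, it introduces explicit approximants $A_n(\phi)$ built solely from the free wave, via $A_0(\phi)=\n(w_\phi)$ and $A_n(\phi)=A_0(\phi)+\sum_{k=1}^n\frac{1}{k!}\n^{(k)}(w_\phi)\bigl(\Gs A_{n-k}(\phi)\bigr)^k$, and proves the pair of estimates $\nr{\n(u_\phi)-A_n(\phi)}_{\aY}\lesssim_n\nr{\phi}^{2n+5}$ and $\nr{\Dl^N A_n(\lambda\phi)-\widetilde W^\phi_N}_{\aY}\lesssim_{n,N,\nr{\phi}}|\lambda|$ (Propositions~\ref{prop:A:1}--\ref{prop:A:2}). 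Because $A_n(\lambda\phi)$ is a finite expression in $\n^{(k)}(\lambda w_\phi)$ and iterated $\Gs$'s, its $\lambda$-differentiability in $\aY$ is elementary (Proposition~\ref{prop:DN}), and the remainder $\n(u_{\lambda\phi})-A_n(\lambda\phi)$ is controlled by a direct Taylor-with-integral-remainder argument; no smoothness of the solution map is needed. Your direct-expansion route could also be made rigorous (e.g.\ via the implicit function theorem applied to the contraction map), but the $A_n$ trick keeps the induction entirely at the level of $w_\phi$. Finally, the paper passes from $\varDelta_\lambda^N$ to genuine $\lambda$-derivatives via a separate lemma (Lemma~\ref{lem:difference}) requiring $C^{N+1}$ regularity of $\lambda\mapsto\int A_n(\lambda\phi)w_\phi$, whereas your polynomial-annihilation argument on $K^{\phi,\lambda}=P_N(\lambda)+O(\lambda^{N+1})$ is more elementary and would also suffice once the expansion with controlled remainder is in hand.
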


Even if the values of  
$\n^{(3)}(0),\cdots,\n^{(N-1)}(0)$ 
($N\ge 4$)
are unknown,
we can directly identify $\n^{(N)}(0)$ 
by the following theorem:
\begin{theorem}\label{thm:main:2}
For $\lambda\in (0,1]$,  
we set  
$I^\lambda_n \in \R$ 
$(n=3,4,\cdots)$
inductively as follows:
\begin{itemize}
  \item 
For $n=3,4$,
we choose $\phi[n] \in B_{\eta_0}\mh$ so that 
\begin{align}\label{nonzero}
\int_{\R^{1+2}}
w_{\phi[n]}^{n+1}
d(t,x)
\neq 0
\end{align} 
and put 
\begin{align*}
I^{\lambda}_n
&=
\frac{
\displaystyle{
\lambda^{-n} \varDelta_\lambda^n
K^{\phi[n],\lambda}
}
}{
\displaystyle{
\int_{\R^{1+2}}
w_{\phi[n]}^{n+1}
d(t,x)
}
}.
\end{align*} 
  \item 
For $n=5,6,\cdots$,
we choose $\phi_n \in B_{\eta_0}\mh$ satisfying  
(\ref{nonzero}), 
and put 
\begin{align*}
I^{\lambda}_n
=
\frac{
\displaystyle{
\lambda^{-n} \varDelta_\lambda^n
K^{\phi[n],\lambda}
-
\int_{\R^{1+2}}
W^{\phi[n]}_n
\brac{
I^{\lambda}_3,\cdots,I^{\lambda}_{n-2}
}
w_{\phi[n]}
d(t,x)
}
}{
\displaystyle{
\int_{\R^{1+2}}
w_{\phi[n]}^{n+1}
d(t,x)
}
}
.
\end{align*} 
\end{itemize}
Then for any $N\in \N$ with $N\ge 3$, 
there exists some positive number 
$\Lambda_N^\prime$ such that  
\begin{align}\label{est:thm:main:2}
 \abs{
\n^{(n)}(0)
-
I^\lambda_n
 }
\lesssim_{N,\phi[3],\cdots, \phi[N] }
\lambda,
\quad
\lambda \in (0, \Lambda_N^\prime)
\quad
(3 \le n \le N).
\end{align} 
In particular, 
the unknown value of $\n^{(N)}(0)$ 
is uniquely determined by  
\begin{align*}
\n^{(N)}(0)
=
\lim_{\lambda\to +0}
I^{\lambda}_N.
\end{align*} 
\end{theorem}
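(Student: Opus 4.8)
The plan is to prove Theorem \ref{thm:main:2} by strong induction on $n$, using Theorem \ref{thm:main} as the single-step engine and controlling how the errors $I^\lambda_m - \n^{(m)}(0)$ propagate through the nonlinear expressions $W^{\phi[n]}_n[\,\cdot\,]$. The base cases $n=3,4$ are immediate: for these $n$ the quantity $I^\lambda_n$ coincides exactly with the fraction appearing in Theorem \ref{thm:main}(1) (with $\phi=\phi[n]$), so the estimate $|\n^{(n)}(0)-I^\lambda_n|\lesssim_{\phi[n]}\lambda$ holds for all $\lambda\in(0,\Lambda_n)$. Fix $N\ge 5$ and suppose, as the induction hypothesis, that for each $m$ with $3\le m\le N-1$ we have constants and a threshold so that $|\n^{(m)}(0)-I^\lambda_m|\lesssim_{m,\phi[3],\dots,\phi[m]}\lambda$ for $\lambda\in(0,\Lambda_{N-1}')$; in particular the vector $(I^\lambda_3,\dots,I^\lambda_{N-1})$ stays in a fixed bounded set and converges to $(\n^{(3)}(0),\dots,\n^{(N-1)}(0))$ as $\lambda\to+0$.

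The heart of the argument is the identity
\begin{align*}
\n^{(N)}(0)-I^\lambda_N
=
\frac{
\Bigl(\lambda^{-N}\varDelta_\lambda^N K^{\phi[N],\lambda}-\displaystyle\int_{\R^{1+2}}W^{\phi[N]}_N[\n^{(3)}(0),\dots,\n^{(N-2)}(0)]\,w_{\phi[N]}\,d(t,x)\Bigr)
}{
\displaystyle\int_{\R^{1+2}}w_{\phi[N]}^{N+1}\,d(t,x)
}
+
\frac{
\displaystyle\int_{\R^{1+2}}\bigl(W^{\phi[N]}_N[I^\lambda_3,\dots,I^\lambda_{N-2}]-W^{\phi[N]}_N[\n^{(3)}(0),\dots,\n^{(N-2)}(0)]\bigr)w_{\phi[N]}\,d(t,x)
}{
\displaystyle\int_{\R^{1+2}}w_{\phi[N]}^{N+1}\,d(t,x)
}.
\end{align*}
The first term on the right is exactly what Theorem \ref{thm:main}(2) bounds by $\lesssim_{N,\phi[N]}\lambda$ for $\lambda$ small, since the denominator is nonzero by \eqref{nonzero}. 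For the second term I would exploit that $W^\phi_N[y_3,\dots,y_{N-2}]$ is, by its inductive definition \eqref{def:W}, a polynomial in $y_3,\dots,y_{N-2}$ with coefficients that are fixed functions in $C^0L^2\cap\aY$ (Remark \ref{rem:def W}(1)); hence on any fixed bounded set of the $y$-variables it is Lipschitz in $(y_3,\dots,y_{N-2})$ with respect to, say, the $\aY$-norm, with a Lipschitz constant depending only on $N$, $\phi[N]$ and the size of that set. Pairing against $w_{\phi[N]}\in L^4B^{1/2}_4\subset$ (the dual of $\aY$) and using the induction hypothesis $|I^\lambda_m-\n^{(m)}(0)|\lesssim\lambda$ for $3\le m\le N-2$, the second term is therefore also $\lesssim_{N,\phi[3],\dots,\phi[N]}\lambda$. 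Adding the two bounds gives \eqref{est:thm:main:2} for $n=N$ with a suitable threshold $\Lambda_N'\le\min\{\Lambda_{N-1}',\Lambda_N\}$ (where $\Lambda_N$ is from Theorem \ref{thm:main}(2)), and shrinking once more if necessary we obtain it simultaneously for all $3\le n\le N$. The final assertion $\n^{(N)}(0)=\lim_{\lambda\to+0}I^\lambda_N$ is an immediate consequence.

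The main obstacle is the Lipschitz control of $y\mapsto W^\phi_N[y]$ and the verification that the pairing $\int W^\phi_N[y]\,w_\phi\,d(t,x)$ is well-defined and continuous in the relevant topology — one must check that every factor produced by the recursion \eqref{def:W}, in particular the iterated applications of $\Gs$ sandwiched between powers of $w_\phi$, lands in a space on which the duality with $w_\phi$ makes sense and depends continuously (indeed polynomially) on the parameters. This is essentially the content of (the proof of) Proposition \ref{prop:N}, so the work is to package those mapping properties as the quantitative Lipschitz estimate above; once that is in hand the induction closes mechanically. A secondary bookkeeping point is that the implied constants and thresholds must be taken uniform over $3\le n\le N$ for fixed $N$, which is why one proves the whole block $\{3,\dots,N\}$ of estimates together rather than one index at a time.
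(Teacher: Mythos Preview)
Your approach is essentially identical to the paper's proof: induction on $N$ with base cases $n=3,4$ from Theorem~\ref{thm:main}(1), and for the inductive step the same two-term splitting, bounding the first piece by Theorem~\ref{thm:main}(2) and the second by the polynomial (hence locally Lipschitz) dependence of $W^\phi_N[y_3,\dots,y_{N-2}]$ on the parameters, which the paper isolates as Lemma~\ref{lem:W}. (One slip in your displayed identity: the first fraction on the right should be preceded by ``$\n^{(N)}(0)-$'', so that the first term is exactly the quantity Theorem~\ref{thm:main}(2) bounds; as written the identity is false, though your surrounding text makes the intended decomposition clear.)
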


\begin{rem}\label{rem:1:thm:main:2}
For example, 
by the definition of $I^\lambda_N$ 
we obtain 
\begin{align*}
I^\lambda_5
&=
\frac{1}{
\displaystyle{
\int_{\R^{1+2}}
w_{\phi[5]}^6
d(t,x)
}}
  \brab{
\lambda^{-5} \varDelta_\lambda^5 K^{\phi[5],\lambda}
-
\braa{
\lambda^{-3} \varDelta_\lambda^3 K^{\phi[3],\lambda}
}^2
\frac{
\displaystyle{
10\int_{\R^{1+2}}
w_{\phi[5]}^3 \Gs w_{\phi[5]}^3
d(t,x)
}
}{
\displaystyle{
\braa{
\int_{\R^{1+2}}
w_{\phi[3]}^4
d(t,x)
}^2
}
}
  }.
\end{align*} 
\end{rem}

\begin{rem}\label{rem:2:thm:main:2}
Recall functions $\n_P$ and $\n_{c_1,c_2}$ 
mentioned at the beginning of this section.
\begin{enumerate}[(1)]
  \item 
In the case $\n=\n_P$, 
we can uniquely reconstruct $\n$ 
since 
\begin{align*}
\n_P^{(N)}(0)
=\frac{N!}{(N-3)!} P^{(N-3)}(0)
\quad (N\ge 3).
\end{align*}
  \item 
Since 
$\n_{c_1,c_2}^{(3)}(0)=6c_1c_2$
and 
$\n_{c_1,c_2}^{(5)}(0)=60c_1c_2^2$,
if $\n=\n_{c_1,c_2}$
then $c_1,c_2$ are uniquely determined. 
\end{enumerate}
\end{rem}

\subsection{G\^{a}teaux differentials of $\s$.}
Let us go back to the direct scattering problem.
The functions 
$\wW_N^\phi[y_3,\cdots,y_N]$ 
mentioned above can be used to 
express higher order G\^{a}teaux differentials 
of $\s$ at 0.
In this paper, 
we define 
higher order G\^{a}teaux differentiability 
as follows (see, e.g., 
\cite{Gateaux1919,HillePhillips1974}): 
For Banach spaces 
$\mathcal{X},\mathcal{Y}$,  
an open set $U \subset \mathcal{X}$, 
a mapping $F:U\to \mathcal{Y}$ 
and $N\in \N$, 
we say that 
$F$ is $N$-th order G\^{a}teaux differentiable  
as a mapping from $U$ to $\mathcal{Y}$ 
if for any 
$\varphi_0 \in U$
and 
$\varphi_1 \in \mathcal{X}$, 
the vector 
\begin{align*}
d^N F(\varphi_0;\varphi_1)
:=
\left.
\frac{d^N}{d \lambda^N}
F(\varphi_0+\lambda \varphi_1)
\right|_{\lambda=0}
\end{align*} 
exists in the $\mathcal{Y}$-sense.
We call the above $d^N F(\varphi;\phi)$ 
the $N$-th order G\^{a}teaux differential 
of $F$ at $\varphi_0$ in the direction $\varphi_1$.
We now introduce 
an expression of 
higher order G\^{a}teaux differentials 
of $\s$ at 0.

\begin{theorem}\label{thm:main:3}
Fix $N \in \N$.
Then for some open set 
$U_N \subset \mh$ containing 0, 
the scattering operator 
$\s$ is $N$-th order G\^{a}teaux differentiable 
as a mapping from $U_N$ to $\mh$.
Furthermore, 
it follows that for any $\phi\in \mh$,
\begin{align*}
d^N \s(0;\phi)
=
\left\{
  \begin{array}{cl}
\phi&(N=1),\\
0   &(N=2),\\
\displaystyle{
\int_{\R}
\binom{
-\oi\sin(t\omega) 
}{
 \cos(t\omega)   
}
\wW_N^\phi[\n^{(3)}(0),\cdots,\n^{(N)}(0)]
dt
}
&
     (N\ge 3).\\
  \end{array}
\right.
\end{align*} 
\end{theorem}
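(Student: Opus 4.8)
The plan is to realise $\s$ as a composition of smooth maps and then differentiate. By Proposition~\ref{prop:S}, for $\phi_-$ in a small ball of $\mh$ the solution $u^{\phi_-}\in Z$ is the unique fixed point of $u=w_{\phi_-}+\Gs\n(u)$, and
\begin{align*}
\s(\phi_-)=\phi_-+\mathcal{L}\big[\n(u^{\phi_-})\big],
\qquad
\mathcal{L}G:=\int_{\R}\binom{-\oi\sin(t\omega)}{\cos(t\omega)}G(t)\,dt,
\end{align*}
where $\Gs$ and $\mathcal{L}$ are bounded linear operators from $\aY=L^{4/3}B^{1/2}_{4/3}$ into $Z$ and into $\mh$, respectively (see Proposition~\ref{prop:St} and Proposition~\ref{prop:S}). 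Writing $\mathcal{G}\colon v\mapsto\n(v)$ for the superposition operator, we thus have $\s=\mathrm{Id}+\mathcal{L}\circ\mathcal{G}\circ\big(\phi_-\mapsto u^{\phi_-}\big)$ on a neighbourhood of $0$.

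First I would prove smooth dependence on the data. The key step — and the main obstacle — is to show that $\mathcal{G}$ is of class $C^\infty$ (it would suffice to have $C^{N+1}$) from a small ball $B_\eta Z$ into $\aY$, with $D^k\mathcal{G}(v)[h_1,\dots,h_k]=\n^{(k)}(v)\,h_1\cdots h_k$. Here assumptions~(\ref{N1})--(\ref{N2}) enter: since every derivative $\n^{(k)}$ again obeys the exponential bound~(\ref{N2}), the same Trudinger--Moser/Adams-type embeddings and fractional Leibniz (product) estimates in $B^{1/2}_{4/3}$ that underlie Proposition~\ref{prop:S} apply to each $\n^{(k)}(v)$ and its products, provided $\eta$ is small enough for the relevant exponential integrals to converge. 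Granting this, $\Psi(\phi_-,v):=w_{\phi_-}+\Gs\mathcal{G}(v)$ is $C^\infty$, and since $\n'(0)=0$ we have $\partial_v\Psi(0,0)=0$, so $\mathrm{Id}-\partial_v\Psi(0,0)$ is invertible; the implicit function theorem in Banach spaces then yields an open ball $U_N\subset\mh$ about $0$ on which $\phi_-\mapsto u^{\phi_-}\in Z$ is $C^\infty$, whence $\s$ is $C^\infty$ on $U_N$ and in particular $N$-th order G\^{a}teaux differentiable there. (Alternatively one may avoid the implicit function theorem and differentiate the fixed-point equation directly: $\frac{d^k}{d\lambda^k}u^{\lambda\phi}$ solves a linear equation whose associated operator is $h\mapsto h-\Gs\!\big(\n'(u^{\lambda\phi})h\big)$, and this is invertible because $\n'(u^{\lambda\phi})=O(\lambda^2)$.)

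Next I would compute the differential at $0$. Fix $\phi\in\mh$ and set $u(\lambda):=u^{\lambda\phi}$, which by the previous step is $C^\infty$ near $\lambda=0$ and solves $u(\lambda)=\lambda w_\phi+\Gs\n(u(\lambda))$. Differentiating and using $\n(0)=\n'(0)=\n''(0)=0$ gives $u(0)=0$, $\frac{d}{d\lambda}u(0)=w_\phi$, $\frac{d^2}{d\lambda^2}u(0)=0$, and, for $k\ge3$, $\frac{d^k}{d\lambda^k}u(0)=k!\,\Gs P_k$ where $k!\,P_k:=\frac{d^k}{d\lambda^k}\n(u(\lambda))\big|_{\lambda=0}$. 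Applying the Fa\`a di Bruno formula to $\lambda\mapsto\n(u(\lambda))$ — using that $D^j\mathcal{G}(0)=0$ for $j\le2$ and $\frac{d^2}{d\lambda^2}u(0)=0$, so that only the $w_\phi$'s and the $\Gs P_{N_\ell}$'s with $N_\ell\ge3$ survive — and regrouping the resulting sum by the number $j$ of factors $\Gs P_{N_\ell}$ and the number $N_0$ of factors $w_\phi$, one finds that $k!\,P_k$ satisfies precisely the recursion~(\ref{def:W}) with $y_i$ replaced by $\n^{(i)}(0)$ and $\widetilde W^\phi_{N_\ell}[\,\cdot\,]$ replaced by $N_\ell!\,P_{N_\ell}$; in particular the constraint $N_0\ge(3-j)_+$ is just $N_0+j\ge3$. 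An induction on $k$ then gives $k!\,P_k=\widetilde W^\phi_k[\n^{(3)}(0),\dots,\n^{(k)}(0)]$, so that $\frac{d^N}{d\lambda^N}\n(u(\lambda))\big|_{\lambda=0}=\widetilde W^\phi_N[\n^{(3)}(0),\dots,\n^{(N)}(0)]\in C^0L^2\cap\aY$ by Remark~\ref{rem:def W}(1). Since $\s(\lambda\phi)=\lambda\phi+\mathcal{L}\big[\n(u(\lambda))\big]$ and $\mathcal{L}$ is bounded linear, it follows that $d^N\s(0;\phi)=\frac{d^N}{d\lambda^N}(\lambda\phi)\big|_{0}+\mathcal{L}\big[\frac{d^N}{d\lambda^N}\n(u(\lambda))\big|_{0}\big]$, which equals $\phi+\mathcal{L}[0]=\phi$ for $N=1$ (as $\frac{d}{d\lambda}\n(u(\lambda))|_0=\n'(0)w_\phi=0$), equals $0+\mathcal{L}[0]=0$ for $N=2$, and equals $\mathcal{L}\big[\widetilde W^\phi_N[\n^{(3)}(0),\dots,\n^{(N)}(0)]\big]=\int_{\R}\binom{-\oi\sin(t\omega)}{\cos(t\omega)}\widetilde W^\phi_N[\n^{(3)}(0),\dots,\n^{(N)}(0)]\,dt$ for $N\ge3$, which is the asserted formula.

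The hard part is thus the $C^\infty$ (really $C^{N+1}$) smoothness of the superposition operator $v\mapsto\n(v)$ from a small ball of $Z$ into $\aY$ for an exponential-type nonlinearity in the two-dimensional critical setting: one must run, uniformly over the derivatives $\n^{(3)},\dots,\n^{(N+1)}$, the sharp Trudinger--Moser embeddings and the product estimates in $B^{1/2}_{4/3}$ on which Proposition~\ref{prop:S} itself rests, with the smallness of the ball ensuring convergence of the exponential integrals. A secondary, purely combinatorial difficulty is organising the multinomial regrouping in the Fa\`a di Bruno step so that it reproduces~(\ref{def:W}) verbatim.
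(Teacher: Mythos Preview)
Your proposal is correct, but the paper takes a genuinely different route. You differentiate the implicit solution $\lambda\mapsto u^{\lambda\phi}$ directly (via the implicit function theorem or a linearised fixed-point argument) and read off the recursion~(\ref{def:W}) from Fa\`a di Bruno. The paper never differentiates $u^{\lambda\phi}$ at all: after pairing $\s(\lambda\phi)$ against a test vector $\psi\in\Sc\oplus\Sc$ and rewriting via Proposition~\ref{prop:K} as $\int \n(u_{\lambda\phi})\,w_{\tilde\psi}$, it replaces $\n(u_{\lambda\phi})$ by the \emph{explicit} approximant $A_n(\lambda\phi)$ of Proposition~\ref{prop:A:1} (built by a finite recursion from $w_\phi$, with error $O(\lambda^{2n+5})$), differentiates $A_n(\lambda\phi)$ in $\lambda$ using Proposition~\ref{prop:A:2}/Lemma~\ref{lem:A:2}, and finally passes from finite differences $\lambda^{-N}\varDelta_\lambda^N$ to the G\^ateaux derivative via Lemma~\ref{lem:difference}. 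What each approach buys: your route is conceptually cleaner and self-contained, but its hard step---$C^{N+1}$ smoothness of $v\mapsto\n(v)$ from a ball of $Z$ into $\aY$---is not available in the paper (Proposition~\ref{prop:DN} only treats differentiation along the single direction $w_\phi$, not along the curve $u^{\lambda\phi}$), so you would have to extend those product/Besov estimates; the paper's route sidesteps this entirely because $A_n(\lambda\phi)$ is explicit and its $\lambda$-derivatives are handled by Proposition~\ref{prop:DN} as stated, at the cost of the weak-type identification through $\psi$ and of reusing the Section~3 machinery already built for Theorems~\ref{thm:main}--\ref{thm:main:2}. Note also that both proofs leave the first clause (existence of $U_N$ on which $\s$ is $N$-times G\^ateaux differentiable) essentially unproved: the paper dismisses it as ``the standard argument'', and you defer it to the same smoothness claim for $\mathcal{G}$.
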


\begin{rem}\label{rem:1:thm:main:3}
For example,
we set $a\in \R$ and $\n(y)=ay^3/6$.
Then we have 
\begin{align}\label{id:rem:1:thm:main:3}
d^N \s(0;\phi)
=
\left\{
  \begin{array}{cl}
\phi
&\text{if $N=1$,}\\
0
&\text{if $N$ is even,}\\
\displaystyle{
a^{(N-1)/2}
\int_{\R}
\binom{
-\oi\sin(t\omega) 
}{
 \cos(t\omega)   
}
\mathcal{W}_N^\phi
dt
}
&\text{if $N$ is odd and $N\ge 3$.}
  \end{array}
\right.
\end{align} 
Here, 
we have defined inductively as follows:
\begin{itemize}
  \item 
We put $\mathcal{W}_3^\phi=w_\phi^3$.
  \item 
For $n\in \N$, 
we put 
\begin{align*}
\mathcal{W}_{2n+3}^\phi
=
\sum_{k=1}^{n \wedge 3}
\frac{(2n+3)!}{k!(3-k)!}
\sum_{
 n_1+\cdots+n_k=n-k
}
w_\phi^{3-k}
\prod_{\ell=1}^k
\frac{
\Gs
\mathcal{W}_{2n_\ell+3}^\phi
}{ (2n_\ell +3) ! }.
\end{align*} 
\end{itemize} 
We also list some of 
$\mathcal{W}_{2n+1}^\phi$:
\begin{align*}
\mathcal{W}_5^\phi
&=
10 w_\phi^2 \Gs w_\phi^3,
\\
\mathcal{W}_7^\phi
&=
70 w_\phi (\Gs w_\phi^3)^2 
+
210 w_\phi^2 \Gs 
\left(w_\phi^2 \Gs w_\phi^3\right),
\\
\mathcal{W}_9^\phi
&=
280 (\Gs w_\phi^3)^3
+
5040 w_\phi 
 \left( \Gs w_\phi^3 \right)
 \Gs \left( w_\phi^2 \Gs w_\phi^3 \right) 
\\
&\quad +
2520 w_\phi^2 \Gs 
\left(
 w_\phi (\Gs w_\phi^3)^2
\right)
+
7560 w_\phi^2 \Gs 
\left(
 w_\phi^2 
 \Gs\left(w_\phi^2 \Gs w_\phi^3 \right)
\right)
.
\end{align*}
We show (\ref{id:rem:1:thm:main:3}) 
in Appendix below.
\end{rem}

We finally mention the contents of the rest of this paper.
In Section 2, 
we remark
some inequalities associated with 
solutions to linear Klein-Gordon equations 
and Besov spaces.
In Sections 3, 
we give estimates 
for higher-order expansions 
of the scattering operator.
In Section 4, 
we establish Theorems 
\ref{thm:main},
\ref{thm:main:2} and \ref{thm:main:3}.
Identity (\ref{id:rem:1:thm:main:3}) is proved 
in Appendix.

\section{Preliminaries}
We introduce some properties used in Section 3.
For this purpose,
we list some notation.
Put 
$Y=L^4 B^{1/2}_4$,
$\wZ=\li \cap Y$ 
and 
\begin{align*}
\nr{v}_{\wZ}=\nr{v}_{\li} \vee \nr{v}_{Y},
\quad
v\in \wZ.
\end{align*} 
For 
an interval $I$ of $\R$, 
$N\in \N$, 
and a mapping 
$V:I\ni \lambda \mapsto V(\lambda) \in \aY$ 
in $C^N(I;\aY)$,
by $\Dl^N V$ 
we denote the $N$-th strong derivative 
in $\aY$.
\\

Henceforth, 
we often use
the following type of Taylor's theorem:

\begin{prop}\label{prop:Taylor}
Put $n\in \Z$.
Let $I$ be an open interval of $\R$.
Assume $h\in C^{n+1}(I;\R)$.
Then 
it follows that 
for any $y_0,y_1 \in I$,
\begin{align*}
h(y_0+y_1)
=
\sum_{k=0}^n \frac{1}{k!}h^{(k)}(y_0) y_1^k
+
\int_0^1
\frac{(1-\theta)^n}{n!} 
h^{(n+1)}(y_0 + \theta y_1) 
y_1^{n+1}
d\theta.
\end{align*} 
\end{prop}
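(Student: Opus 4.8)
The plan is to reduce the two-variable-looking identity to the classical one-variable Taylor formula with integral remainder by freezing the direction $y_1$, and then to prove that one-variable formula by induction on $n$ using a single integration by parts at each step. Since $I$ is an interval, the segment joining $y_0$ to $y_0+y_1$ lies in $I$ (this is automatic in the applications, where $I=\R$), so the auxiliary function $g:[0,1]\to\R$ defined by $g(\theta)=h(y_0+\theta y_1)$ is well-defined and, by the chain rule, belongs to $C^{n+1}([0,1];\R)$ with $g^{(k)}(\theta)=h^{(k)}(y_0+\theta y_1)\,y_1^k$ for $0\le k\le n+1$. In terms of $g$, the asserted identity is exactly $g(1)=\sum_{k=0}^n g^{(k)}(0)/k! + \int_0^1 \frac{(1-\theta)^n}{n!}\,g^{(n+1)}(\theta)\,d\theta$, after substituting these derivative formulas and pulling the powers of $y_1$ outside the sum and the integral.

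First I would establish the base case $n=0$, which is simply the fundamental theorem of calculus: $g(1)-g(0)=\int_0^1 g'(\theta)\,d\theta$. This matches the claim, since the weight $(1-\theta)^0/0!$ is identically $1$.

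For the inductive step, assuming the formula with $n$ replaced by $n-1$, I would integrate the remainder term $\int_0^1 \frac{(1-\theta)^{n-1}}{(n-1)!}\,g^{(n)}(\theta)\,d\theta$ by parts, taking the antiderivative $-\frac{(1-\theta)^n}{n!}$ of the factor $\frac{(1-\theta)^{n-1}}{(n-1)!}$ and differentiating $g^{(n)}$. The boundary term vanishes at $\theta=1$ because $(1-\theta)^n=0$ there, while at $\theta=0$ it contributes $g^{(n)}(0)/n!$, which is precisely the missing $k=n$ term of the Taylor sum; the remaining integral is the desired order-$n$ remainder $\int_0^1 \frac{(1-\theta)^n}{n!}\,g^{(n+1)}(\theta)\,d\theta$. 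This closes the induction and proves the one-variable identity for every $n\in\Z$.

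Finally I would substitute $g^{(k)}(0)=h^{(k)}(y_0)\,y_1^k$ and $g^{(n+1)}(\theta)=h^{(n+1)}(y_0+\theta y_1)\,y_1^{n+1}$ back into the one-variable identity to recover the statement as written. There is no genuine analytic obstacle here; the only point deserving a word of care is that $g$ and all its relevant derivatives are defined on the whole of $[0,1]$, i.e. that the entire segment $y_0+\theta y_1$ remains in $I$. This follows from the convexity of the interval $I$ together with $y_0,\,y_0+y_1\in I$, and it is vacuous when $I=\R$, which is the case relevant to the subsequent sections.
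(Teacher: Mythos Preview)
Your proof is correct; the paper does not actually supply a proof of this proposition, treating it as a standard form of Taylor's theorem with integral remainder and simply invoking it where needed. Your remark about needing the segment $\{y_0+\theta y_1:\theta\in[0,1]\}\subset I$ (rather than the literal hypothesis $y_1\in I$) is well taken, and as you note it is immaterial in the applications since the paper only uses the case $I=\R$.
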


By Strichartz type estimates 
(see, e.g., 
\cite{NakamuraOzawa2001,Wang1999}), 
we obtain the following proposition:
\begin{prop}\label{prop:St}
The mapping 
$\phi \mapsto  w_\phi$
(resp. $G    \mapsto  \Gs G$) 
is a bounded linear operator 
from $\mh$ to $Z$ 
(resp. $\aY$ to $Z$). 
\end{prop}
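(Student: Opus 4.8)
\textbf{Proof proposal for Proposition \ref{prop:St}.}
The plan is to deduce both mapping properties from a suitable family of Strichartz-type estimates for the two-dimensional Klein-Gordon propagator, together with the energy estimate that controls the $\li$ and $C_b^1 L^2$ components of the $Z$-norm. Recall that $Z = C_b^0 H^1 \cap C_b^1 L^2 \cap L^4 B^{1/2}_4$ and that $\nr{v}_Z = \nr{v}_{\li} \vee \nr{v}_{L^4 B^{1/2}_4} \vee \nr{\Pt v}_{\Li L^2}$, while $\aY = L^{4/3} B^{1/2}_{4/3}$. Both $\phi \mapsto w_\phi$ and $G \mapsto \Gs G$ are manifestly linear, so the entire content is the boundedness of the three component norms.

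First I would treat the homogeneous map $\phi \mapsto w_\phi$. For the energy components, since $w_\phi$ solves the free Klein-Gordon equation, the quantity $\nr{w_\phi(t)}_{H^1}^2 + \nr{\Pt w_\phi(t)}_2^2$ is conserved in $t$; using the explicit formula $w_\phi(t) = \cos(t\omega)f + \oi \sin(t\omega)g$ and that $\cos(t\omega)$, $\sin(t\omega)$, $\omega^{\pm 1}\cos(t\omega)$, $\omega^{\pm 1}\sin(t\omega)$ are bounded Fourier multipliers (uniformly in $t$, recalling $\omega = \sqrt{1-\Delta} \ge 1$), one gets
\begin{align*}
\nr{w_\phi}_{\li} \vee \nr{\Pt w_\phi}_{\Li L^2}
\lesssim
\nr{f}_{H^1} \vee \nr{g}_2
=
\nr{\phi}.
\end{align*}
For the dispersive component $\nr{w_\phi}_{L^4 B^{1/2}_4}$ I would invoke the homogeneous Strichartz estimate for the two-dimensional Klein-Gordon equation in Besov-space form, as recorded in \cite{NakamuraOzawa2001,Wang1999}: the pair $(q,r)=(4,4)$ is admissible in two dimensions, and the estimate reads $\nr{w_\phi}_{L^4 B^{1/2}_4} \lesssim \nr{\phi}_{\mh}$, the $1/2$ derivative loss in the space exponent being exactly the one that the admissible pair carries. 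Combining the three bounds gives $\nr{w_\phi}_Z \lesssim \nr{\phi}$.

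For the inhomogeneous map $G \mapsto \Gs G$ I would proceed identically, replacing the homogeneous estimates by their Duhamel counterparts. The dispersive bound is the inhomogeneous (retarded) Strichartz estimate $\nr{\Gs G}_{L^4 B^{1/2}_4} \lesssim \nr{G}_{L^{4/3} B^{1/2}_{4/3}} = \nr{G}_{\aY}$, where $(4,4)$ and its dual $(4/3,4/3)$ are the admissible pair and its conjugate; this is the standard dual-admissible inhomogeneous estimate and is the form stated in \cite{NakamuraOzawa2001}. For the energy components, differentiating the Duhamel formula and applying the operator bound for $\omega^{-1}\sin((t-\tp)\omega)$ and $\cos((t-\tp)\omega)$ uniformly in $t,\tp$, followed by Minkowski's inequality in $\tp$ and the embedding $\aY \hookrightarrow L^1 L^2$ (which holds because $B^{1/2}_{4/3} \hookrightarrow L^2$ in two dimensions and $L^{4/3}(\R) \hookrightarrow L^1_{\mathrm{loc}}$, with the time integrability supplied by the admissible structure), yields $\nr{\Gs G}_{\li} \vee \nr{\Pt \Gs G}_{\Li L^2} \lesssim \nr{G}_{\aY}$. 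Taking the maximum of the three estimates gives $\nr{\Gs G}_Z \lesssim \nr{G}_{\aY}$, and in particular $\Gs G \in Z$, completing the proof.

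The main obstacle I anticipate is organizing the inhomogeneous energy bound cleanly: one must check that the retarded integral $\Gs G$ genuinely lands in $C_b^0 H^1 \cap C_b^1 L^2$ (continuity in $t$, not merely boundedness) and that the $\aY \hookrightarrow L^1 L^2$ control is legitimate, since that embedding is what converts the space-time norm on $G$ into an $L^1$-in-time estimate suitable for Minkowski's inequality. Once the correct admissible Strichartz pair $(4,4)$ with its $B^{1/2}$ Besov regularity is identified and the dual inhomogeneous estimate is quoted from \cite{NakamuraOzawa2001,Wang1999}, the remaining steps are routine multiplier bounds and the triangle inequality.
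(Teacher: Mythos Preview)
The paper does not give a detailed argument here; it simply invokes Strichartz-type estimates and points to \cite{NakamuraOzawa2001,Wang1999}. Your proposal is therefore more explicit than what the paper records, and for the homogeneous map and for the $L^4 B^{1/2}_4$ component of $\Gs G$ your outline is correct and matches the intended references.

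There is, however, a genuine gap in your treatment of the energy components of $\Gs G$. You write that the bound $\nr{\Gs G}_{\li} \vee \nr{\Pt \Gs G}_{\Li L^2} \lesssim \nr{G}_{\aY}$ follows from Minkowski's inequality together with an embedding $\aY \hookrightarrow L^1 L^2$. That embedding is false: while $B^{1/2}_{4/3}(\R^2) \hookrightarrow L^2(\R^2)$ is correct, $L^{4/3}(\R)$ does \emph{not} embed into $L^1(\R)$, and no appeal to ``the admissible structure'' repairs this, since Minkowski's inequality on the retarded integral genuinely needs an $L^1_t$ bound on $\nr{G(t)}_{L^2}$. The energy estimate for $\Gs G$ with $G$ in a dual Strichartz space is itself a Strichartz-type bound: it is obtained by duality from the homogeneous estimate $\nr{e^{\pm it\omega}f}_{L^4 B^{1/2}_4} \lesssim \nr{f}_{L^2}$ (after adjusting one power of $\omega$), and is part of the standard inhomogeneous Strichartz package quoted in \cite{NakamuraOzawa2001,Wang1999}. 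So the correct fix is simply to cite the full inhomogeneous Strichartz estimate, which already contains the $\Li H^1$ and $\Li L^2$ bounds, rather than attempting a separate Minkowski/embedding argument.
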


We next give two propositions 
associated with the function $\n$.

\begin{prop}\label{prop:N} 
Some $(\varepsilon_k)_{k=0}^\infty \subset (0,1]$ 
and $\varepsilon>0$  
satisfy the following properties:
\begin{enumerate}[(1)]
  \item
If 
$N\in \Z$, 
$v \in \wZ \cap B_{\varepsilon_N}\li$
and $\phi\in \mh$,
then 
\begin{align*}
\nr{\n^{(N)}(v) w_\phi^N}_{\aY} 
\lesssim_N 
\braa{
\nr{v}_{\wZ}^{(3-N)_+}
+
\nr{v}_{\wZ}^{1+(2-N)_+}
}
\nr{\phi}^N.
\end{align*} 
  \item 
If $k\in \N$, 
$N\in \Z$, 
$v \in \wZ \cap B_{\varepsilon_{k+N}}\li$ 
and 
$G_1,\cdots,G_k \in \aY$, 
then 
\begin{align*}
\nr{
\n^{(k+N)}(v) w_\phi^N 
\prod_{\ell=1}^k \Gamma G_\ell
}_{\aY} 
\lesssim_{k,N}
\braa{
\nr{v}_{\wZ}^{(3-k-N)_+}
+
\nr{v}_{\wZ}^{1+(2-k-N)_+}
}
\nr{\phi}^N
\prod_{\ell=1}^k
\nr{G_\ell}_{\aY}.
\end{align*} 
  \item 
If 
$v \in C^0 H^1 \cap B_{\varepsilon}\li$,
then 
$\n(v) \in C^0 L^2$.
\end{enumerate}
\end{prop}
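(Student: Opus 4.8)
\emph{Proof plan.} The statement will follow by combining the Strichartz-type bounds of Proposition~\ref{prop:St}, a Taylor expansion (Proposition~\ref{prop:Taylor}) isolating the polynomial part of $\n^{(m)}$ from an exponentially growing remainder, a fractional Leibniz rule in the Besov scale, and the two-dimensional Trudinger--Moser inequality
\begin{align*}
\int_{\R^2}\bigl(e^{\alpha f^2}-1\bigr)\,dx\ \lesssim_\alpha\ \nr{f}_{L^2}^2
\qquad\text{whenever }\ \nr{\nabla f}_{L^2}\le\delta(\alpha),
\end{align*}
which compensates for the failure of $H^1(\R^2)\hookrightarrow L^\infty$. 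First I would fix the constants: choose $\varepsilon$ and each $\varepsilon_k$ small enough (depending on $c$, on $k$, and on the finitely many exponents occurring below) so that whenever $\nr{v}_{\li}\le\varepsilon_k$ every exponential factor $e^{\alpha v(t)^2}$ that appears stays in the Trudinger--Moser range, uniformly in $t$. I would then prove (1) and (2) simultaneously, reading (1) as the case $k=0$ of (2), and set $m=k+N$ for the order of the derivative of $\n$ that occurs.

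\emph{Polynomial part.} By Proposition~\ref{prop:Taylor} applied to $\n^{(m)}$ at $0$, together with~(\ref{N1}), write $\n^{(m)}(y)=P_m(y)+R_m(y)$ with $P_m$ a polynomial all of whose monomials $y^j$ satisfy $j\ge(3-m)_+$ (the lower Taylor coefficients vanish by~(\ref{N1})) and $R_m(y)=O\bigl(|y|^{J+1}e^{cy^2}\bigr)$ for $J$ as large as convenient. Then $P_m(v)\,w_\phi^N\prod_{\ell=1}^k\Gs G_\ell$ is a finite sum of terms $v^j w_\phi^N\prod_{\ell=1}^k\Gs G_\ell$ with $j\ge(3-m)_+$. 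For each such term I would distribute the $B^{1/2}$-derivative over the $N+k+j$ factors by the fractional Leibniz rule, estimate in $x$ by H\"older using the two-dimensional embeddings $H^1\hookrightarrow L^q$ ($q<\infty$), $B^{1/2}_4\hookrightarrow L^4$ and $H^1\hookrightarrow B^{1/2}_4$, and estimate in $t$ by H\"older; since $w_\phi$ and each $\Gs G_\ell$ lie in $L^4B^{1/2}_4\cap\li$ by Proposition~\ref{prop:St} while $v\in Y\cap\li=\wZ$, and since $j\ge(3-m)_+$ forces $N+k+j\ge3$, there are always three $L^4_t$-factors available to reach $L^{4/3}_t$. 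This yields $\lesssim_{k,N}\nr{v}_{\wZ}^{\,j}\nr{\phi}^N\prod_{\ell}\nr{G_\ell}_{\aY}$; the minimal value $j=(3-m)_+$ gives the first term on the right-hand side, while the subcase in which the half-derivative falls on $\n^{(m)}(v)$ forces one extra factor of $v$ to be measured in $B^{1/2}_4$, producing the exponent $1+(2-m)_+$. Summing the finitely many contributions yields the claim for the polynomial part.

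\emph{Exponential remainder.} For $R_m(v)\,w_\phi^N\prod_{\ell}\Gs G_\ell$ I would use $|R_m(v)|\lesssim_J|v|^{J+1}e^{cv^2}=|v|^{J+1}+|v|^{J+1}\bigl(e^{cv^2}-1\bigr)$. The first summand is polynomial (with $J+1\ge3$) and is handled exactly as above. For the second, taking $J$ large and using H\"older in $x$, peel off a small fractional power $\bigl(e^{\alpha v^2}-1\bigr)^{1/r}$, which Trudinger--Moser bounds in $L^{rq}_x$ uniformly in $t$ by $\lesssim\nr{v}_{\li}^{\,\gamma}$ for some $\gamma>0$; the remaining powers of $v$ and the factors $w_\phi,\Gs G_\ell$ go into the $L^4B^{1/2}_4$ and $\li$ slots as before, and the $B^{1/2}$-derivative is spread by fractional Leibniz, the term with $|D|^{1/2}$ hitting $e^{\alpha v^2}-1$ being controlled by the chain rule and a further Trudinger--Moser bound for the companion function $v\,e^{\alpha v^2}$ (again of type $O(|y|e^{c'y^2})$). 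Since $\nr{v}_{\li}\le\varepsilon_m$ is small, this summand contributes only powers of $\nr{v}_{\wZ}$ no smaller than those already present and is absorbed into the implied constant. This step is essentially the nonlinear estimate of Nakamura--Ozawa~\cite{NakamuraOzawa2001}, carried with the extra $w_\phi$ and $\Gs G_\ell$ factors.

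\emph{Part (3) and the main obstacle.} For the continuity statement one has $|\n(v)|\lesssim|v|^3e^{cv^2}$, and the argument of the previous paragraph with no $w_\phi$-factors and no time integration gives $\n(v(t))\in L^2$ with $\nr{\n(v(t))}_{L^2}\lesssim Q\bigl(\nr{v(t)}_{H^1}\bigr)\nr{v(t)}_{H^1}^2$ for a polynomial $Q$, uniformly in $t$ because $v\in B_\varepsilon\li$ keeps $\nr{v(t)}_{H^1}$ under the Trudinger--Moser threshold along the whole trajectory; continuity of $t\mapsto\n(v(t))\in L^2$ then follows from continuity of $t\mapsto v(t)\in H^1$ by applying the same bounds to $\n(v(t))-\n(v(s))=(v(t)-v(s))\int_0^1\n'\bigl(v(s)+\theta(v(t)-v(s))\bigr)\,d\theta$ and controlling the integral factor in a suitable $L^p$ by Trudinger--Moser. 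The main obstacle throughout is the interplay between the exponential growth~(\ref{N2}) and the criticality of $\mh$ in two dimensions: no $L^\infty$-based composition or multiplier estimate is available, so the argument must keep $\nr{v}_{\li}$ strictly below the Trudinger--Moser threshold---this is what dictates the choice of the $\varepsilon_k$---and must pass the exponential remainder through the $B^{1/2}_{4/3}$-norm via a fractional Trudinger--Moser estimate; that Besov estimate is the technical heart of the proof.
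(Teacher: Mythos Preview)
Your plan is essentially correct and in the same spirit as the paper's proof (both are variants of the Nakamura--Ozawa exponential nonlinearity estimate), but the technical implementation is different. The paper does \emph{not} use a fractional Leibniz rule or the Trudinger--Moser inequality directly. Instead it works with the difference characterization of $B^{1/2}_{4/3}$ (Lemma~\ref{lem:prop:N}(3)): one expands $\varDelta_z\bigl(\n^{(k)}(v)\prod_\ell v_\ell\bigr)$ by the exact Leibniz rule for finite differences, and the ``chain rule'' for $\n^{(k)}(v)$ is just the mean value inequality $|\varDelta_z\n^{(k)}(v)|\le\int_0^1|\n^{(k+1)}([T_zv,v]_\theta)\,\varDelta_z v|\,d\theta$. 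Rather than splitting into a polynomial part and an exponential remainder handled by Trudinger--Moser, the paper expands the full exponential bound $|\n^{(k)}(y)|\le c_k|y|^{(3-k)_+}e^{c_ky^2}$ as a power series $\sum_m c_k^{m+1}|y|^{2m+(3-k)_+}/m!$, estimates each monomial in $L^{r(m,k)}$ by the sharp embedding $\|f\|_r\le C_E\sqrt{r}\,\|f\|_{B^{1/2}_4}$ (Lemma~\ref{lem:prop:N}(1)), and checks convergence of the resulting series via a Stirling-type bound $r(m,k)^{r(m,k)/4}\lesssim_k(8e)^m m!$; the smallness of $\nr{v}_{\li}$ is what makes the series geometric. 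This is more elementary and entirely self-contained: no paraproduct machinery, no fractional chain rule, and no appeal to a separate Trudinger--Moser theorem. Your route would also work, but the ``fractional Trudinger--Moser'' step you flag as the technical heart is precisely what the paper's power-series-plus-$\sqrt{r}$ argument accomplishes by hand; when unpacked, a rigorous version of your fractional chain rule for $e^{\alpha v^2}$ would likely reduce to something very close to the paper's computation. Part~(3) in the paper is handled the same way (power series and mean value, no Trudinger--Moser), giving a direct bound $\nr{\n(v(t))-\n(v(t_0))}_2\lesssim\nr{v(t)-v(t_0)}_{H^1}$ times a convergent series in $\nr{v}_{\li}$.
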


In order to show the above proposition,
we list some notation and lemma.
For $z\in \R^2$ and for a function $f:\R^2 \to \R$, 
we set 
$(T_z f)(x)=f(x+z)$ ($x\in \R^2$)
and 
$\varDelta_z f=T_z f-f$.
For $\theta\in [0,1]$ and $a,b\in \R$, 
we put 
$[a,b]_\theta=\theta a +(1-\theta)b$.
The following lemma is easily proved 
by standard properties of Besov spaces
(see, e.g., \cite{BL}):

\begin{lem}\label{lem:prop:N}
There exists a constant $C_E \ge 1$ 
satisfying the following properties:
\begin{enumerate}[(1)]
  \item 
$\nr{f}_r \le C_E \sqrt{r} \nr{f}_{B_4^{1/2}}$
$(r\ge 4, \ f\in B_4^{1/2})$.
  \item 
$\nr{f}_{B_4^{1/2}} \le C_E \nr{f}_{H^1}$
$(f\in H^1)$.
  \item 
It follows that 
for any $r\in [1,\infty]$ and $f\in B_r^{1/2}$,
\begin{align*}
\nr{f}_{B_r^{1/2}}
\le 
C_E\nr{f}_r
+
C_E \sqrt{
 \int_0^\infty
 \braa{
 \tau^{-1/2} \sup_{|z|\le \tau} \nr{\varDelta_z f}_r
 }^2
 \frac{d\tau}{\tau}
}
\le 
C_E^2 \nr{f}_{B_r^{1/2}}.
\end{align*} 
\end{enumerate}
\end{lem}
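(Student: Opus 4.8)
The plan is to deduce all three estimates from the Littlewood--Paley characterization of the Besov spaces, the entire subtlety being the bookkeeping of the dependence on $r$. Write $f=\sum_{j\ge0}P_j f$ for the inhomogeneous Littlewood--Paley decomposition on $\R^2$, where $P_0$ localizes to frequencies $\lesssim1$ and $P_j$ ($j\ge1$) to the annulus $|\xi|\sim2^j$, and recall the norm equivalence $\nr{f}_{B_p^{1/2}}\approx\braa{\sum_{j\ge0}2^j\nr{P_jf}_p^2}^{1/2}$. The two tools I would use throughout are the Bernstein inequalities in dimension two (for $1\le p\le q\le\infty$ one has $\nr{P_jg}_q\lesssim2^{2j(1/p-1/q)}\nr{P_jg}_p$ and $\nr{\nabla P_jg}_r\lesssim2^j\nr{P_jg}_r$) and the elementary fact that each translation $T_z$, each projector $P_j$, and each reproducing kernel below acts on $L^r$ with operator norm controlled by the $L^1$-norm of a fixed kernel, hence \emph{independently of} $r\in[1,\infty]$ (Young's and Minkowski's inequalities). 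The latter is what forces $C_E$ in (3) to be uniform in $r$, and the former produces the explicit $\sqrt r$ in (1).

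For (1) I would apply Bernstein with the pair $(p,q)=(4,r)$ to get $\nr{P_jf}_r\lesssim2^{2j(1/4-1/r)}\nr{P_jf}_4=2^{-2j/r}\,\braa{2^{j/2}\nr{P_jf}_4}$, sum over $j$, and apply Cauchy--Schwarz:
\begin{align*}
\nr{f}_r
\le\sum_{j\ge0}\nr{P_jf}_r
\lesssim\braa{\sum_{j\ge0}2^{-4j/r}}^{1/2}\nr{f}_{B_4^{1/2}}.
\end{align*}
Since $\sum_{j\ge0}2^{-4j/r}=(1-2^{-4/r})^{-1}$ and $r\mapsto r(1-2^{-4/r})$ is continuous on $[4,\infty)$ with positive limit $4\log2$ as $r\to\infty$, one has $(1-2^{-4/r})^{-1}\lesssim r$ uniformly for $r\ge4$, giving the claimed $\sqrt r$. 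Estimate (2) is the embedding $B_2^1=H^1\hookrightarrow B_4^{1/2}$: Bernstein with $(p,q)=(2,4)$ yields $2^{j/2}\nr{P_jf}_4\lesssim2^j\nr{P_jf}_2$, and taking the $\ell^2$-norm over $j$ gives $\nr{f}_{B_4^{1/2}}\lesssim\braa{\sum_{j\ge0}2^{2j}\nr{P_jf}_2^2}^{1/2}\approx\nr{f}_{H^1}$.

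For (3) I would prove the two-sided comparison of $\nr{f}_{B_r^{1/2}}$ with the modulus of smoothness $\mu(f,\tau)_r:=\sup_{|z|\le\tau}\nr{\varDelta_zf}_r$. For the first inequality I would use that for $j\ge1$ the convolution kernel $\psi_j(y)=2^{2j}\psi(2^jy)$ of $P_j$ has zero mean, so that $P_jf=\int\psi_j(y)\,\varDelta_{-y}f\,dy$; Minkowski's integral inequality and the Schwartz decay of $\psi$ then give $\nr{P_jf}_r\lesssim\mu(f,2^{-j})_r$ with a constant depending only on $\int(1+|w|)|\psi(w)|\,dw$, hence independent of $r$. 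Comparing $\sum_{j\ge1}\braa{2^{j/2}\mu(f,2^{-j})_r}^2$ from below with the integral (valid since $\mu$ is nondecreasing) and bounding the block $\nr{P_0f}_r\lesssim\nr{f}_r$ yields $\nr{f}_{B_r^{1/2}}\lesssim\nr{f}_r+\braa{\int_0^\infty(\tau^{-1/2}\mu(f,\tau)_r)^2\tfrac{d\tau}{\tau}}^{1/2}$. For the second inequality I would combine the two elementary bounds $\nr{\varDelta_zP_jf}_r\le2\nr{P_jf}_r$ and, from $\varDelta_zg=\int_0^1 z\cdot\nabla g(\cdot+sz)\,ds$ together with the gradient Bernstein inequality, $\nr{\varDelta_zP_jf}_r\lesssim2^j|z|\,\nr{P_jf}_r$, so that $\nr{\varDelta_zP_jf}_r\lesssim\min(1,2^j|z|)\nr{P_jf}_r$. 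Setting $\tau=2^{-k}$ and $a_j=2^{j/2}\nr{P_jf}_r$, this gives $2^{k/2}\mu(f,2^{-k})_r\lesssim\sum_{j\ge0}2^{-|k-j|/2}a_j$, a discrete convolution against the $\ell^1$-kernel $2^{-|k-j|/2}$; discrete Young's inequality then produces $\braa{\sum_k(2^{k/2}\mu(f,2^{-k})_r)^2}^{1/2}\lesssim\nr{(a_j)}_{\ell^2}\approx\nr{f}_{B_r^{1/2}}$, while the same Cauchy--Schwarz argument gives $\nr{f}_r\lesssim\nr{f}_{B_r^{1/2}}$; passing back to the continuous integral closes the second inequality.

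The only genuine obstacle is exactly this control of constants in $r$: every step must be arranged so that the surviving $r$-dependence is either the explicit geometric factor $(1-2^{-4/r})^{-1}\sim r$ in (1) or altogether absent in (3). This is secured by the fact that the translations $T_z$, the projectors $P_j$, and the kernels $\psi_j$ all act on $L^r$ with norms bounded by $L^1$-norms of fixed kernels; in particular the exponential decay of the convolution kernel $2^{-|k-j|/2}$ simultaneously guarantees convergence of the modulus integral at both ends $\tau\to0$ and $\tau\to\infty$ and the uniformity of $C_E$ over the whole range $r\in[1,\infty]$, including the endpoint $r=\infty$, where $T_z$ is an isometry and both Young's and Minkowski's inequalities persist.
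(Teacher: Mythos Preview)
Your argument is correct; the paper itself does not prove this lemma but merely records that the three estimates follow from standard Besov-space theory, citing Bergh--L\"ofstr\"om. Your Littlewood--Paley treatment is precisely the kind of proof one extracts from that reference, and you have correctly handled the only delicate point, namely tracking constants so that (1) carries the explicit factor $\sqrt{r}$ and (3) holds with a single $C_E$ uniform over $r\in[1,\infty]$.
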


\begin{proof}[Proof of Proposition \ref{prop:N}]
Since (1) can be shown more easily than (2), 
we prove only (2) and (3).
We see from the hypothesis of $\n$ that 
some $(c_k)_{k=0}^\infty \subset (0,\infty)$ satisfies 
\begin{align*}
\abs{\n^{(k)}(y)}
\le
c_k |y|^{(3-k)_+} e^{c_k y^2}
=
\sum_{m=0}^\infty
\frac{c_k^{m+1}}{m!} |y|^{2m+(3-k)_+},
\quad
k\in \Z,\ y\in \R.
\end{align*} 
By Proposition \ref{prop:Taylor},
we obtain for any $v\in \aY$,
\begin{align}\label{ineq:1:prf:prop:N}
\abs{
\vD \n^{(k)}(v)
}
&\le 
\int_0^1
\abs{
\n^{(k+1)}([T_z v,v]_\theta)
\vD v
}
d\theta
\nonumber
\\
&=
\int_0^1
\sum_{m=0}^\infty
\frac{c_{k+1}^{m+1}}{m!} 
\abs{ [T_z v,v]_\theta }^{2m+(2-k)_+}
\abs{ \vD v}
d\theta,
\quad
k\in \Z,\ z\in \R^2.
\end{align}

We now prove (2).
Fix $k\in \N$.
Put 
$\varepsilon_k>0$, 
$v \in \wZ \cap B_{\varepsilon_k}\li$
and 
$v_1,\cdots,v_k \in \wZ$.
We see from Proposition \ref{prop:St} 
that it is sufficient to establish 
\begin{align}\label{ineq:2:prf:prop:N}
\nr{
\n^{(k)}(v) 
\prod_{\ell=1}^k v_\ell
}_{\aY} 
\lesssim_k 
\braa{
\nr{v}_{\wZ}^{(3-k)_+}
+
\nr{v}_{\wZ}^{1+(2-k)_+}
}
\prod_{\ell=1}^k
\nr{v_\ell}_{\wZ}.
\end{align} 
Put $z\in \R^2$.
Then we have
\begin{align}\label{id:1:prf:prop:N}
\vD \braa{ 
\n^{(k)}(v) 
\prod_{\ell=1}^k v_\ell
}
=
\braa{
\vD \n^{(k)}(v) 
}
\prod_{\ell=1}^k v_\ell
+
\braa{
T_z \n^{(k)}(v)
}
\vD \prod_{\ell=1}^k v_\ell
=:
I_1+I_2.
\end{align} 
For $m\in \Z$, 
set $r(m,k)=4m+2k+2(2-k)_+$.
It follows from 
(\ref{ineq:1:prf:prop:N}), 
the H\"older inequality 
and Lemma \ref{lem:prop:N}
that 
\begin{align*}
\nr{ I_1 }_{4/3}
&\le 
\sum_{m=0}^\infty
\frac{c_{k+1}^{m+1}}{m!} 
\nr{ v }^{2m+(2-k)_+}_{r(m,k)}
\nr{ \vD v}_4
\prod_{\ell=1}^k 
\nr{v_\ell}_{r(m,k)}
\\
&\le
\sum_{m=0}^\infty
\frac{c_{k+1}^{m+1}}{m!} 
\braa{C_E \sqrt{r(m,k)} }^{2m+k+(2-k)_+}
\nr{ v }^{2m+(2-k)_+}_{B_4^{1/2}}
\nr{ \vD v}_4
\prod_{\ell=1}^k 
\nr{v_\ell}_{B_4^{1/2}}
\\
&\le
C_E^{k-2+(2-k)_+}
\nr{ \vD v}_4
\nr{ v }^{(2-k)_+}_{B_4^{1/2}}
\braa{
\prod_{\ell=1}^{2-(2-k)_+}
\nr{v_\ell}_{B_4^{1/2}}
}
\braa{
\prod_{\ell=3-(2-k)_+}^k 
\nr{v_\ell}_{H^1}
}
\\
&\quad\times
\sum_{m=0}^\infty
\frac{c_{k+1}^{m+1}}{m!} 
C_E^{4m+k+(2-k)_+}
r(m,k)^{r(m,k)/4}
\nr{ v }^{2m}_{H^1}.
\end{align*} 
Since 
\begin{align*}
r(m,k)^{r(m,k)/4} 
\le
(4m+4k)^{m+k}
\lesssim_k
(4e)^m (m+k)!
\lesssim_k
(8e)^m m!,
\end{align*} 
we obtain
\begin{align*}
&\nr{ I_1 }_{4/3}
\\
&\lesssim_k
\nr{ \vD v}_4
\nr{ v }^{(2-k)_+}_{B_4^{1/2}}
\braa{
\prod_{\ell=1}^{2-(2-k)_+}
\nr{v_\ell}_{B_4^{1/2}}
}
\braa{
\prod_{\ell=3-(2-k)_+}^k 
\nr{v_\ell}_{H^1}
}
\sum_{m=0}^\infty
\braa{
8c_{k+1}C_E^4 e
}^m 
\nr{ v }^{2m}_{H^1}.
\end{align*} 
Similarly,
we have 
\begin{align*}
\nr{
\n^{(k)}(v) 
\prod_{\ell=1}^k v_\ell
}_{4/3}
\lesssim_k
\nr{ v_1 }_4
\nr{ v }^{(3-k)_+}_{B_4^{1/2}}
\braa{
\prod_{\ell=2}^{k}
\nr{v_\ell}_{\mathcal{X}_{k,1,\ell}}
}
\sum_{m=0}^\infty
\braa{
8c_k C_E^4 e
}^m 
\nr{ v }^{2m}_{H^1}
\end{align*} 
and 
\begin{align*}
\nr{I_2}_{4/3}
\lesssim_k
\sum_{j=1}^k
\nr{ \vD v_j }_4
\nr{ v }^{(3-k)_+}_{B_4^{1/2}}
\braa{
\prod_{
\substack{
1\le \ell\le k
\\
\ell \neq j
}
}
\nr{v_\ell}_{\mathcal{X}_{k,j,\ell}}
}
\sum_{m=0}^\infty
\braa{
8c_k C_E^4 e
}^m 
\nr{ v }^{2m}_{H^1}.
\end{align*} 
Here, 
for $k,j,\ell \in \N$ 
we have defined 
\begin{align*}
\mathcal{X}_{k,j,\ell}
=
\left\{
  \begin{array}{cl}
B_4^{1/2}&(k=2, \, j=1,2, \, \ell\neq j),    \\
B_4^{1/2}&(k\ge 3, \, j=1, \, \ell=2,3),\\
B_4^{1/2}&(k\ge 3, \, j=2, \, \ell=1,3),\\
B_4^{1/2}&(k\ge 3, \, 3\le j \le k, \, \ell=1,2),\\
H^1      &(otherwise).\\
  \end{array}
\right.
\end{align*} 
We see from 
(\ref{id:1:prf:prop:N}) and Lemma \ref{lem:prop:N} that 
(\ref{ineq:2:prf:prop:N}) holds true 
if $0<\varepsilon_k\ll 1$.

We next show (3). 
Fix $t,t_0\in I$.
Then by 
Proposition \ref{prop:Taylor},
we obtain 
\begin{align*}
\abs{
\n(v(t))-\n(v(t_0))
}
&\le 
\int_0^1 
\abs{
\n^{(1)}([v(t),v(t_0)]_\theta) 
}
\abs{
v(t)-v(t_0)
}
d\theta
\\
&\le
\int_0^1 
\sum_{m=0}^\infty
\frac{c_1^{m+1}}{m!}
\abs{
[v(t),v(t_0)]_\theta
}^{2m+2}
\abs{
v(t)-v(t_0)
}
d\theta.
\end{align*} 
As in the proof of (2),
we have 
\begin{align*}
\nr{
\n(v(t))-\n(v(t_0))
}_2
\le
\nr{
v(t)-v(t_0)
}_{H^1}
\sum_{m=0}^\infty
\frac{c_1^{m+1}C_E^{4m+6}}{m!}
(4m+6)^{m+2}
\nr{
v}_{\li}^{2m+2}.
\end{align*} 
Therefore,
if $\nr{v}_{\li}$ is sufficiently small, 
then $\n(v)\in C^0 L^2$.
\end{proof}

\begin{prop}\label{prop:DN} 
Put $\phi\in \mh$ and $N\in \Z$.
Let $\varepsilon_k$ $(k\in \Z)$
be the number appearing in Proposition \ref{prop:N}.
\begin{enumerate}[(1)]
  \item 
If $\delta>0$ and  
$\sqrt{2}\delta \nr{\phi} \le \varepsilon_{N+2}$,
then 
$
\n^{(N)}(\lambda w_\phi)w_\phi^N 
\in C^1((-\delta,\delta);\aY)
$
and 
\begin{align*}
\Dl 
\braa{
\n^{(N)}(\lambda w_\phi)w_\phi^N
}=
\n^{(N+1)}(\lambda w_\phi)w_\phi^{N+1}
\ 
\text{in $\aY$ $(\lambda\in (-\delta,\delta))$.}
\end{align*}
  \item 
Put $k\in \N$. 
Assume that 
$
G_1(\lambda),\cdots,G_k(\lambda)\in 
C^1((-\delta^\prime,\delta^\prime);\aY)
$
for some $\delta^\prime>0$.
\end{enumerate}
If 
$0<\delta \le \delta^\prime$
and 
$
\sqrt{2}\delta \nr{\phi} \le 
\varepsilon_{k+N} 
\wedge \varepsilon_{k+N+1}
\wedge \varepsilon_{k+N+2}
$,
then
\begin{align*}
\n^{(k+N)}(\lambda w_\phi)w_\phi^N 
\prod_{\ell=1}^k \Gs G_\ell(\lambda)
\in C^1((-\delta,\delta);\aY)
\end{align*} 
and 
\begin{align}
&
\Dl 
\braa{
\n^{(k+N)}(\lambda w_\phi)w_\phi^N 
\prod_{\ell=1}^k \Gs G_\ell(\lambda)
}
=
\n^{(k+N+1)}(\lambda w_\phi)w_\phi^{N+1}
\prod_{\ell=1}^k \Gs G_\ell(\lambda)
\nonumber
\\
&\quad +
\n^{(k+N)}(\lambda w_\phi)w_\phi^N 
\sum_{\ell=1}^k \Gs \Dl G_\ell(\lambda)
\prod_{\substack{1\le m \le k \\ m\neq \ell}} 
\Gs G_m(\lambda)
\ 
\text{in $\aY$ $(\lambda\in (-\delta,\delta))$.}
\label{id:prop:DN}
\end{align} 
\end{prop}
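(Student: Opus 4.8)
The statement is a differentiation-under-the-composition result: I want to show that $\lambda\mapsto \n^{(k+N)}(\lambda w_\phi)w_\phi^N\prod_{\ell=1}^k\Gs G_\ell(\lambda)$ is $C^1$ into $\aY$, with the derivative given by the expected product/chain rule. The plan is to first dispose of part (1) as the base case, then reduce part (2) to part (1) plus the product rule for strongly differentiable $\aY$-valued maps.

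For part (1), fix $\lambda_0\in(-\delta,\delta)$ and a small increment $h$. Using Proposition \ref{prop:Taylor} with $n=0$ applied to $\n^{(N)}$ at the point $\lambda_0 w_\phi(x)$ with increment $hw_\phi(x)$, I write
\begin{align*}
\n^{(N)}((\lambda_0+h)w_\phi)w_\phi^N-\n^{(N)}(\lambda_0 w_\phi)w_\phi^N
=h\int_0^1 \n^{(N+1)}((\lambda_0+\theta h)w_\phi)w_\phi^{N+1}\,d\theta.
\end{align*}
Subtracting $h\,\n^{(N+1)}(\lambda_0 w_\phi)w_\phi^{N+1}$ and dividing by $h$, the remainder is $\int_0^1[\n^{(N+1)}((\lambda_0+\theta h)w_\phi)-\n^{(N+1)}(\lambda_0 w_\phi)]w_\phi^{N+1}\,d\theta$; I then need its $\aY$-norm to tend to $0$ as $h\to0$. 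Applying Proposition \ref{prop:Taylor} once more to $\n^{(N+1)}$ (increment $\theta h w_\phi$) turns this into an expression controlled by $\n^{(N+2)}$ evaluated along $w_\phi$, whose $\aY$-norm I bound via Proposition \ref{prop:N}(1) — this is exactly where the hypothesis $\sqrt2\,\delta\nr\phi\le\varepsilon_{N+2}$ is used, since along the whole segment $\nr{(\lambda_0+\theta h)w_\phi}_{\li}\le \delta\nr\phi$ stays in $B_{\varepsilon_{N+2}}\li$ (with a little room, as $\vert\lambda_0\vert<\delta$ and $\vert h\vert$ small). The resulting bound is $O(\vert h\vert)\nr\phi^{N+2}$, giving differentiability; continuity of $\lambda\mapsto\n^{(N+1)}(\lambda w_\phi)w_\phi^{N+1}$ in $\aY$ follows by the same Taylor-plus-Proposition \ref{prop:N}(1) estimate. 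This establishes $\n^{(N)}(\lambda w_\phi)w_\phi^N\in C^1$ with the stated derivative.

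For part (2), I use the Leibniz rule in $\aY$: if $A(\lambda)\in C^1((-\delta,\delta);\aY)$ — here $A(\lambda)=\n^{(k+N)}(\lambda w_\phi)w_\phi^N$, which is $C^1$ by part (1) with $N$ replaced by $k+N$ — and each $\Gs G_\ell(\lambda)\in C^1$ (because $G_\ell\in C^1((-\delta',\delta');\aY)$ and $\Gs$ is a bounded linear operator on $\aY$ by Proposition \ref{prop:St}, so it commutes with $\Dl$), then the product is differentiable provided the pointwise product $\aY\times(\wZ)^{\otimes}\to\aY$ is continuous bilinear. That continuity is supplied by Proposition \ref{prop:N}(2): it gives $\nr{\n^{(k+N)}(v)w_\phi^N\prod_\ell\Gs G_\ell}_{\aY}\lesssim_{k,N}(\dots)\nr\phi^N\prod_\ell\nr{G_\ell}_{\aY}$, i.e. the map is jointly bounded, hence the telescoping difference quotient
\begin{align*}
\frac{A(\lambda+h)\prod\Gs G_\ell(\lambda+h)-A(\lambda)\prod\Gs G_\ell(\lambda)}{h}
\end{align*}
splits into $k+1$ terms, each converging in $\aY$ to the corresponding term of \eqref{id:prop:DN}; the error terms are handled by feeding the difference quotients' remainders through the multilinear estimate of Proposition \ref{prop:N}(2). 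The three conditions $\sqrt2\,\delta\nr\phi\le\varepsilon_{k+N}\wedge\varepsilon_{k+N+1}\wedge\varepsilon_{k+N+2}$ appear because the computation needs Proposition \ref{prop:N}(2) applied at orders $k+N$ (for the product itself), $k+N+1$ (for the $\Dl A$ term, via part (1)), and $k+N+2$ (for the continuity/remainder estimate in part (1) at order $k+N$).

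The main obstacle is purely bookkeeping: making sure all the argument points $(\lambda_0+\theta h)w_\phi$ stay inside the balls $B_{\varepsilon_{\bullet}}\li$ uniformly for small $h$, and organizing the telescoping of the $(k+1)$-fold product so that each error term is genuinely $o(1)$ in $\aY$ rather than merely bounded. No single step is deep — the content is entirely in the already-established estimates of Propositions \ref{prop:N}, \ref{prop:St} and \ref{prop:Taylor}; the work is in assembling them carefully.
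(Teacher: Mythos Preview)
Your overall strategy --- Taylor expansion plus the estimates of Proposition \ref{prop:N} --- matches the paper's, but your reduction of part (2) to part (1) has a real gap. You claim that $A(\lambda)=\n^{(k+N)}(\lambda w_\phi)w_\phi^N$ lies in $C^1((-\delta,\delta);\aY)$ ``by part (1) with $N$ replaced by $k+N$''. This fails on two counts. First, replacing $N$ by $k+N$ in part (1) produces $\n^{(k+N)}(\lambda w_\phi)w_\phi^{k+N}$, not $w_\phi^N$. Second, and more seriously, $A(\lambda)$ need not lie in $\aY$ at all: when $N$ is small (e.g.\ $N=0$) the factor $\n^{(k)}(\lambda w_\phi)$ is merely bounded and carries no space--time integrability, so it does not belong to $L^{4/3}B^{1/2}_{4/3}$. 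The full product lands in $\aY$ only because the factors $\Gs G_\ell$ contribute the needed decay through Proposition \ref{prop:N}(2); you cannot peel them off. Relatedly, Proposition \ref{prop:N}(2) is not a bilinear estimate $\aY\times\wZ\to\aY$ as you frame it --- it controls only expressions of the specific form $\n^{(k+N)}(v)w_\phi^N\prod_\ell\Gs G_\ell$.

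The fix, which is exactly what the paper does (writing out the case $k=1$ explicitly as $F_1+F_2+F_3$), is to keep the whole product together throughout. Decompose the difference quotient minus the claimed derivative directly into $k+1$ pieces, each already of the shape $\n^{(\cdot)}(v)\,w_\phi^{\cdot}\prod\Gs(\cdot)$ to which Proposition \ref{prop:N}(2) applies. For the piece coming from differentiating the $\n^{(k+N)}$ factor, insert the second-order Taylor remainder so that $\n^{(k+N+2)}$ appears \emph{with the $\Gs G_\ell$'s still attached}, and bound via Proposition \ref{prop:N}(2) at order $k+N+2$; for the pieces coming from differentiating a $G_\ell$, leave $\n^{(k+N)}$ in place and use the $C^1$ hypothesis on $G_\ell$ inside Proposition \ref{prop:N}(2) at order $k+N$. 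Your final sentence already points in this direction; just drop the attempt to view $A(\lambda)$ as an autonomous $\aY$-valued map.
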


\begin{proof}[Outline of proof]
We show only (\ref{id:prop:DN}) with $k=1$ 
since other cases are proved similarly.
Assume 
$0<\delta\le \delta^\prime$ and 
$
\sqrt{2} \delta \nr{\phi} \le 
\varepsilon_{N+1} 
\wedge \varepsilon_{N+2}
\wedge \varepsilon_{N+3}
$.
By direct calculation, 
we have 
for any $\lambda,\lambda_0 \in (-\delta,\delta)$
with $\lambda\neq \lambda_0$,
\begin{align*}
&
\frac{
\n^{(N+1)}(\lambda w_\phi)w_\phi^N 
\Gs G_1(\lambda)
-
\n^{(N+1)}(\lambda_0 w_\phi)w_\phi^N 
\Gs G_1(\lambda_0)
}{\lambda-\lambda_0}
\\
&\quad -
\n^{(N+2)}(\lambda_0 w_\phi)w_\phi^{N+1}
\Gs G_1(\lambda_0)
-
\n^{(N+1)}(\lambda_0 w_\phi)w_\phi^N 
\Gs \Dl G_1(\lambda_0)
\\
&=
\braa{
\frac{
\n^{(N+1)}(\lambda w_\phi)w_\phi^N 
-
\n^{(N+1)}(\lambda_0 w_\phi)w_\phi^N 
}{\lambda-\lambda_0}
-
\n^{(N+2)}(\lambda_0 w_\phi)w_\phi^{N+1}
}
\Gs G_1(\lambda_0)
\\
&\quad +
\n^{(N+1)}(\lambda_0 w_\phi)w_\phi^N 
\Gs
\braa{
\frac{
G_1(\lambda)-G_1(\lambda_0)
}{
\lambda-\lambda_0
}
-\Dl G_1(\lambda_0)
}
\\
& \quad +
\braa{
\n^{(N+1)}(\lambda w_\phi)w_\phi^N 
-
\n^{(N+1)}(\lambda_0 w_\phi)w_\phi^N 
}
\Gs
\frac{
G_1(\lambda)-G_1(\lambda_0)
}{
\lambda-\lambda_0
}
\\
&=F_1+F_2+F_3.
\end{align*} 
We see from Proposition \ref{prop:Taylor} 
that 
\begin{align*}
\nr{ F_1 }_{\aY}
\le 
|\lambda-\lambda_0|
\int_0^1 
\nr{
\n^{(N+3)}\braa{
[
\lambda,\lambda_0
]_\theta w_\phi
}
w_\phi^{N+2} 
\Gs G_1(\lambda_0)
}_{\aY}
d\theta.
\end{align*} 
Since 
\begin{align*}
\nr{
[
\lambda,\lambda_0
]_\theta w_\phi
}_{\li}
\le 
\delta \nr{ w_\phi }_{\li}
\le
\sqrt{2} \delta \nr{\phi}
\le
\varepsilon_{N+3},
\end{align*} 
it follows from Proposition 
\ref{prop:N} that 
\begin{align*}
\nr{ F_1 }_{\aY}
\lesssim_N
|\lambda-\lambda_0|
\braa{
1+\nr{\phi}
}
\nr{\phi}^{N+2} \nr{ G_1(\lambda_0) }_{\aY}.
\end{align*} 
Similarly, 
we obtain 
\begin{align*}
\nr{ F_2 }_{\aY}
\lesssim_N
\braa{
\nr{\phi}^{(2-N)_+}+\nr{\phi}^{1+(1-N)_+}
}
\nr{\phi}^N 
\nr{ 
\frac{
G_1(\lambda)-G_1(\lambda_0)
}{
\lambda-\lambda_0
}
-\Dl G_1(\lambda_0)
}_{\aY}.
\end{align*} 
Hence we have 
$
\lim_{\lambda\to\lambda_0}F_1
=\lim_{\lambda\to\lambda_0}F_2
=0
$
in $\aY$. 
We can show 
$\lim_{\lambda\to\lambda_0}F_3=0$ in $\aY$ 
more easily,
which completes the proof.
\end{proof}

\section{Expansion of ${\mbox{\boldmath $S$}}$}
Our goal in the present section is 
to show a higher-order expansion of 
the scattering operator $\s$, 
which is essential to establish our main results.
We see from Proposition \ref{prop:S} that 
for any $\phi\in \mh$ and $\lambda \in \R$ 
with 
$\lambda \phi \in B_{\eta_0}\mh$, 
there exists a unique time-global solution 
$u_{\lambda \phi} \in Z$ 
to the integral equation of the form 
\begin{align*}
u_{\lambda \phi} 
= 
w_{\lambda \phi} +\Gs \n(u_{\lambda \phi}). 
\end{align*}
Furthermore, 
we have $\n(u_{\lambda \phi}) \in \aY$,
\begin{align}\label{est:u}
\nr{\n(u_{\lambda \phi})}_{\aY} 
\lesssim \nr{\lambda \phi}^3
\end{align} 
and 
\begin{align}\label{id:S}
\s(\lambda\phi)-\lambda\phi
=
\int_{\R}
\binom{
-\oi\sin(t\omega) 
}{
 \cos(t\omega)   
}
\n(u_{\lambda \phi}(t))
dt.
\end{align} 
Then we obtain  
\begin{align}\label{id:K}
K^{\phi,\lambda}
=
\int_{\R} 
\bra{\n(u_{\lambda\phi}(t)),w_\phi(t)}_{L^2}dt, 
\quad 
\phi\in \mh, \
0<\lambda \ll 1 
\end{align} 
by using 
Propositions \ref{prop:St} and \ref{prop:N}
and 
the following proposition:

\begin{prop}\label{prop:K}
Set $\psi\in \mh$ and $G\in \aY \cap C^0 L^2$.
Then it follows that 
\begin{align*}
\bra{
\int_{\R}
\binom{
-\oi\sin(t\omega) 
}{
 \cos(t\omega)   
}
G(t)
dt,
\Miijj{0}{-\omega^{-2}}{1}{0}\psi
}_{\mh}
=
\int_{\R} 
\bra{G(t),w_\psi(t)}_{L^2}dt.
\end{align*} 
\end{prop}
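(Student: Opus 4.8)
The plan is to reduce the identity to elementary bookkeeping with the self-adjoint Fourier multipliers $\omega^s$, $\cos(t\omega)$ and $\oi\sin(t\omega)$, together with an interchange of the $t$-integral with continuous linear functionals on $\mh$. Write $\psi=\binom{f}{g}$, so that $\Miijj{0}{-\omega^{-2}}{1}{0}\psi=\binom{-\omega^{-2}g}{f}$, and denote by $\Psi=\binom{\Psi_1}{\Psi_2}\in\mh$ the left-hand vector $\int_{\R}\binom{-\oi\sin(t\omega)}{\cos(t\omega)}G(t)\,dt$. First I would record the two facts that legitimize every subsequent step. (i) The scalar function $t\mapsto\bra{G(t),w_\psi(t)}_{L^2}$ lies in $L^1(\R)$: the pairing is a genuine $L^2$ inner product for every $t$ (as $G\in C^0L^2$ and $w_\psi\in Z$ give $G(t),w_\psi(t)\in L^2$), and Hölder in $x$ and $t$ combined with $B^{1/2}_{4/3}\hookrightarrow L^{4/3}$ and $\nr{\cdot}_4\lesssim\nr{\cdot}_{B^{1/2}_4}$ (Lemma \ref{lem:prop:N}(1)) yields $\int_{\R}\abs{\bra{G(t),w_\psi(t)}_{L^2}}\,dt\lesssim\nr{G}_{\aY}\nr{w_\psi}_{L^4B^{1/2}_4}<\infty$, since $G\in\aY$ and $w_\psi\in L^4B^{1/2}_4$ by Proposition \ref{prop:St}. (ii) The integral defining $\Psi$ converges in $\mh$ --- this is part of the scattering framework of \cite{NakamuraOzawa2001} (used already in Proposition \ref{prop:S} for $G=\n(u)$, and valid for every $G\in\aY$ by the same Strichartz-based argument) --- so $\Psi=\lim_{T\to\infty}\int_{-T}^{T}\binom{-\oi\sin(t\omega)}{\cos(t\omega)}G(t)\,dt$ in $\mh$, the truncated integrals being genuine Bochner integrals because the integrand is bounded in $\mh$-norm by $\nr{G(t)}_2$ and $\mh$-continuous on $[-T,T]$.

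The computation then runs as follows. Expanding the $\mh$-inner product by its definition and using $\bra{\omega a,\omega^{-1}b}_{L^2}=\bra{a,b}_{L^2}$ gives $\bra{\Psi,\binom{-\omega^{-2}g}{f}}_{\mh}=\bra{\omega\Psi_1,-\omega^{-1}g}_{L^2}+\bra{\Psi_2,f}_{L^2}=-\bra{\Psi_1,g}_{L^2}+\bra{\Psi_2,f}_{L^2}$. The functionals $\bra{\cdot,g}_{L^2}$ and $\bra{\cdot,f}_{L^2}$ are continuous on $L^2$, hence commute with the Bochner integrals over $[-T,T]$ and, after $T\to\infty$, with $\Psi_1$ and $\Psi_2$ by step (ii); recalling the definitions of $\Psi_1,\Psi_2$ this produces $-\bra{\Psi_1,g}_{L^2}=\int_{\R}\bra{\oi\sin(t\omega)G(t),g}_{L^2}\,dt$ and $\bra{\Psi_2,f}_{L^2}=\int_{\R}\bra{\cos(t\omega)G(t),f}_{L^2}\,dt$. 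Finally, transferring the multipliers to the second slot --- each of $\cos(t\omega)$ and $\oi\sin(t\omega)$ is a real, hence self-adjoint, $L^2$-Fourier multiplier --- turns these into $\int_{\R}\bra{G(t),\oi\sin(t\omega)g}_{L^2}\,dt$ and $\int_{\R}\bra{G(t),\cos(t\omega)f}_{L^2}\,dt$; adding them and recognizing $\cos(t\omega)f+\oi\sin(t\omega)g=w_\psi(t)$ gives exactly $\int_{\R}\bra{G(t),w_\psi(t)}_{L^2}\,dt$.

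The only point requiring genuine care is the interchange of the improper $t$-integral with the inner products on $\mh$; I expect this to be the main (and mild) obstacle, and it is dispatched precisely by combining the finite-interval Bochner theory with the $L^1(\R)$-bound of step (i) and the convergence of step (ii). Everything else --- the passage from the $\mh$-pairing to $L^2$-pairings and the transfer of the Fourier multipliers --- is routine once those two facts are in hand.
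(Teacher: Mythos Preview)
Your proof is correct and takes a genuinely different route from the paper's. The paper first invokes a time-decay estimate (Proposition 3.3 of \cite{Sasaki2024}) to obtain the weighted-$L^1$ bound
\[
\bra{x}^{-1}\omega^{-1/2}\binom{-\sin(t\omega)}{\cos(t\omega)}G(t)\in L^1(\R_t;L^2\oplus L^2),
\]
then reduces by density to $\psi\in\Sc\oplus\Sc$ so that the pairing with $\bra{x}\omega^{1/2}\Miijj{0}{-\omega^{-1}}{1}{0}\psi$ becomes an honest $L^2\oplus L^2$ inner product, and finally moves the integral inside and computes via a rotation-matrix identity. By contrast, you avoid both the external decay estimate and the density reduction: your step (i) is the elementary H\"older bound $\nr{G w_\psi}_{L^1L^1}\lesssim\nr{G}_{\aY}\nr{w_\psi}_{L^4B^{1/2}_4}$, your step (ii) is just the Strichartz-based $\aY\to\mh$ mapping already used in Proposition~\ref{prop:S}, and the interchange is handled by truncation plus Bochner theory on compact intervals. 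This is more self-contained, since it uses only ingredients already present in Sections~1--2 rather than importing a decay estimate from \cite{Sasaki2024}; the paper's version, on the other hand, makes the $t$-integrand genuinely absolutely integrable in a single space, which may be conceptually cleaner if one wants to manipulate the integral further. One small point worth making explicit in your write-up: to pass each of $\int_{-T}^{T}\bra{G(t),\cos(t\omega)f}_{L^2}\,dt$ and $\int_{-T}^{T}\bra{G(t),\oi\sin(t\omega)g}_{L^2}\,dt$ to $T=\infty$ separately, apply your step~(i) with $\psi$ replaced by $\binom{f}{0}$ and $\binom{0}{g}$, which shows each integrand lies in $L^1(\R)$ individually.
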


\begin{proof}
Fix $G\in \aY \cap C^0L^2$.
Then time decay estimates 
and 
Strichartz type estimates 
imply 
\begin{align}\label{L1:prf:prop:K}
\bra{x}^{-1}\omega^{-1/2}
\binom{
-\sin(t\omega)  
}{
 \cos(t\omega)  
}
G(t)
\in L^1(\R_t; L^2 \oplus L^2)
\end{align} 
and 
\begin{align*}
\int_{\R}
\binom{
-\oi\sin(t\omega) 
}{
 \cos(t\omega)   
}
G(t)
dt
\in \mh,
\end{align*}
respectively. 
For the proof of (\ref{L1:prf:prop:K}), 
see Proposition 3.3 in \cite{Sasaki2024}.

We have only to prove the case 
$\psi\in \Sc \oplus \Sc$ 
since 
$\Sc\oplus\Sc$ is dense in $\mh$ 
and we have the estimate 
\begin{align*}
\nr{
G w_{\psi_1}
-
G w_{\psi_2} 
}_{L^1L^1}
&\le 
\nr{
G
}_{L^{4/3} L^{4/3}}
\nr{
w_{\psi_1} - w_{\psi_2} 
}_{L^4 L^4}
\\
& \lesssim
\nr{
G
}_{\aY}
\nr{\psi_1 - \psi_2},
\quad
\psi_1,\psi_2\in \mh,
\end{align*} 
which is given by 
Proposition \ref{prop:St}.

Henceforth, 
we put $\psi\in \Sc \oplus \Sc$.
By (\ref{L1:prf:prop:K}),  
we obtain 
\begin{align*}
&
\bra{
\int_{\R}
\binom{
-\oi \sin(t\omega)
}{
 \cos(t\omega)   
}
G(t)
dt,
\Miijj{0}{-\omega^{-2}}{1}{0}\psi
}_{\mh}
\\
&=
\bra{
\int_{\R}
\bra{x}^{-1}\omega^{-1/2}
\binom{
-\sin(t\omega)  
}{
 \cos(t\omega)  
}
G(t)
dt
,
\bra{x}\omega^{1/2}
\Miijj{0}{-\omega^{-1}}{1}{0}\psi
}_{L^2 \oplus L^2}
\\
&=
\int_{\R}
\bra{
\bra{x}^{-1}\omega^{-1/2}
\binom{
-\sin(t\omega)  
}{
 \cos(t\omega) 
}
G(t)
,
\bra{x}\omega^{1/2}
\Miijj{0}{-\omega^{-1}}{1}{0}\psi
}_{L^2 \oplus L^2}
dt.
\end{align*} 
We see from 
the assumption $G\in C^0L^2$ that 
\begin{align*}
&
\bra{
\bra{x}^{-1}\omega^{-1/2}
\binom{
-\sin(t\omega)  
}{
 \cos(t\omega) 
}
G(t)
,
\bra{x}\omega^{1/2}
\Miijj{0}{-\omega^{-1}}{1}{0}\psi
}_{L^2 \oplus L^2}
\\
&=
\bra{
\binom{
-\sin(t\omega) 
}{
 \cos(t\omega)
}
G(t)
,
\Miijj{0}{-\omega^{-1}}{1}{0}\psi
}_{L^2 \oplus L^2}
\\
&=
\bra{
\Miijj{\cos(t\omega)}{-\sin(t\omega)
}{\sin(t\omega)}{\cos(t\omega)}
\binom{
0
}{
G(t)
}
,
\Miijj{0}{-\omega^{-1}}{1}{0}\psi
}_{L^2 \oplus L^2}
\\
&=
\bra{
\binom{
0
}{
G(t)
}
,
\Miijj{\cos(t\omega)}{\sin(t\omega)
}{-\sin(t\omega)}{\cos(t\omega)}
\Miijj{0}{-\omega^{-1}}{1}{0}\psi
}_{L^2 \oplus L^2}
\\
&=
\bra{ G(t), w_\psi(t) }_{L^2},
\quad
t\in \R.
\end{align*} 
Hence we have the desired identity.
\end{proof}

Expansion (\ref{SAL}) 
follows from (\ref{id:K}) and 
the following approximation of $\n(u_\phi)$:
\begin{align*}
\n(u_\phi)
=
\n(w_\phi)
+
O(\nr{\phi}^5)
\quad\text{as $\nr{\phi}\to 0$ in $\aY$.}
\end{align*}  
In order to obtain higher-order expansions,
we establish 
a revised approximation of $\n(u_\phi)$.
For this purpose, 
for $\phi\in \mh$ 
we define functions 
$A_0(\phi)=\n(w_\phi)$,
and 
\begin{align*}
A_n(\phi)
=
A_0(\phi)
+
\sum_{k=1}^n \frac{1}{k!} 
\n^{(k)}(w_\phi) 
\braa{ \Gs A_{n-k}(\phi) }^k
\end{align*}
if $n\in \N$ and 
$A_0(\phi),\cdots,A_{n-1}(\phi) \in \aY$.
We now show if $\nr{\phi}\ll 1$ 
then $A_n(\phi)$ is well-defined 
and 
we obtain a revised approximation of $\n(u_\phi)$.

\begin{prop}\label{prop:A:1}  
Fix $n\in \Z$.
Then there exists 
some $\delta_n \in (0,\eta_0]$ such that 
if $\phi \in B_{\delta_n}\mh$ then 
$A_n(\phi)$ is defined as a function in $\aY$, 
and  
\begin{align}
\nr{A_n(\phi)}_{\aY} 
&\lesssim_n
\nr{\phi}^3,
\label{est:1:prop:A:1}
\\
\nr{\n(u_\phi)-A_n(\phi)}_{\aY}
&\lesssim_n
\nr{\phi}^{2n+5}.
\label{est:2:prop:A:1}
\end{align}  
\end{prop}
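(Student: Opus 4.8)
The plan is to induct on $n$. The base case $n=0$ is immediate: $A_0(\phi)=\n(w_\phi)=\n^{(0)}(w_\phi)w_\phi^0$, so Proposition \ref{prop:N}(1) with $N=0$ gives $\nr{A_0(\phi)}_{\aY}\lesssim(\nr{w_\phi}_{\wZ}^3+\nr{w_\phi}_{\wZ})\nr{\phi}^0$, and since $\nr{w_\phi}_{\wZ}\lesssim\nr{\phi}$ by Proposition \ref{prop:St}, shrinking $\delta_0$ so that $\nr{w_\phi}_{\wZ}\le\varepsilon_0\wedge 1$ yields $\nr{A_0(\phi)}_{\aY}\lesssim\nr{\phi}^3$. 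For \eqref{est:2:prop:A:1} with $n=0$, write $\n(u_\phi)-\n(w_\phi)$ using Proposition \ref{prop:Taylor} with $n=0$ and $y_0=w_\phi$, $y_1=\Gs A_0(\phi)+(\text{higher order})$; more cleanly, $u_\phi-w_\phi=\Gs\n(u_\phi)$ and $\nr{\Gs\n(u_\phi)}_Z\lesssim\nr{\n(u_\phi)}_{\aY}\lesssim\nr{\phi}^3$ by \eqref{est:u} and Proposition \ref{prop:St}, so a first-order Taylor expansion of $\n$ together with Proposition \ref{prop:N}(1) (bounding $\n^{(1)}(\cdot)$ times the difference) gives $\nr{\n(u_\phi)-\n(w_\phi)}_{\aY}\lesssim\nr{\phi}^2\cdot\nr{u_\phi-w_\phi}_{\wZ}\lesssim\nr{\phi}^5$, which is $\nr{\phi}^{2\cdot 0+5}$.

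For the inductive step, assume the statement holds for $0,1,\dots,n-1$ and pick $\delta_n\le\delta_{n-1}$. First, \eqref{est:1:prop:A:1}: each summand of $A_n(\phi)-A_0(\phi)$ has the form $\tfrac1{k!}\n^{(k)}(w_\phi)(\Gs A_{n-k}(\phi))^k$ with $1\le k\le n$; applying Proposition \ref{prop:N}(2) with $N=0$ and $G_1=\dots=G_k=A_{n-k}(\phi)$ bounds it by $\lesssim_k(\nr{w_\phi}_{\wZ}^{(3-k)_+}+\nr{w_\phi}_{\wZ}^{1+(2-k)_+})\prod\nr{A_{n-k}(\phi)}_{\aY}$, and the inductive bound $\nr{A_{n-k}(\phi)}_{\aY}\lesssim_k\nr{\phi}^3$ makes this $\lesssim_n\nr{\phi}^{3k}\lesssim_n\nr{\phi}^3$ for $\nr{\phi}\le 1$, so summing over $k$ gives \eqref{est:1:prop:A:1}. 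Note this simultaneously shows $A_n(\phi)\in\aY$, so the definition is legitimate.

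The main work is \eqref{est:2:prop:A:1}. Starting from $u_\phi=w_\phi+\Gs\n(u_\phi)$, apply Proposition \ref{prop:Taylor} to expand $\n(u_\phi)=\n(w_\phi+\Gs\n(u_\phi))$ around $w_\phi$ to order $n$:
\begin{align*}
\n(u_\phi)
=
\sum_{k=0}^{n}\frac{1}{k!}\n^{(k)}(w_\phi)\braa{\Gs\n(u_\phi)}^k
+
R_n,
\end{align*}
where $R_n=\int_0^1\tfrac{(1-\theta)^n}{n!}\n^{(n+1)}(w_\phi+\theta\Gs\n(u_\phi))\braa{\Gs\n(u_\phi)}^{n+1}d\theta$. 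By \eqref{est:u}, $\nr{\Gs\n(u_\phi)}_{\aY}\lesssim\nr{\phi}^3$ (and similarly in $\wZ$ via Proposition \ref{prop:St}), so Proposition \ref{prop:N}(2) with $N=0$, $k=n+1$ bounds the remainder by $\nr{R_n}_{\aY}\lesssim_n\nr{\phi}^{3(n+1)}\le\nr{\phi}^{2n+5}$ (since $3(n+1)\ge 2n+5$ for $n\ge2$; for $n=0,1$ one checks directly). Now subtract the definition of $A_n(\phi)$: the $k=0$ terms cancel, and the difference becomes $\sum_{k=1}^n\tfrac1{k!}\n^{(k)}(w_\phi)\brac{\braa{\Gs\n(u_\phi)}^k-\braa{\Gs A_{n-k}(\phi)}^k}+R_n$. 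Factor each bracket as a telescoping sum $a^k-b^k=\sum_{j}a^{j}(a-b)b^{k-1-j}$ with $a=\Gs\n(u_\phi)$, $b=\Gs A_{n-k}(\phi)$, so $a-b=\Gs\braa{\n(u_\phi)-A_{n-k}(\phi)}$ and by the inductive hypothesis \eqref{est:2:prop:A:1} at level $n-k$ together with Proposition \ref{prop:St}, $\nr{a-b}_{\wZ}\lesssim_n\nr{\phi}^{2(n-k)+5}$. Each $a$ or $b$ factor contributes $\lesssim\nr{\phi}^3$, so the $k$-th term is bounded — again via Proposition \ref{prop:N}(2) with $N=0$ to absorb the $\n^{(k)}(w_\phi)$ and $(k-1)$ of the Besov/Sobolev factors — by $\lesssim_n\nr{\phi}^{(3-k)_+}\cdot\nr{\phi}^{3(k-1)}\cdot\nr{\phi}^{2(n-k)+5}$, and one checks the exponent is $\ge 2n+5$ for every $k\in\{1,\dots,n\}$. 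Summing over $k$ and adding $\nr{R_n}_{\aY}$ yields \eqref{est:2:prop:A:1}. The one delicate point, and the place to be careful, is the bookkeeping of exponents in this last display: one must verify $(3-k)_+ + 3(k-1) + 2(n-k) + 5 \ge 2n+5$ uniformly in $1\le k\le n$ (equivalently $(3-k)_+ + k - 2\ge 0$, which holds), and treat the small cases $n=0,1$ where the remainder exponent $3(n+1)$ is the binding constraint separately; once the index arithmetic is pinned down the estimates are routine applications of Propositions \ref{prop:St} and \ref{prop:N}.
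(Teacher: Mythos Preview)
Your argument is correct and follows essentially the same route as the paper: induct on $n$, Taylor-expand $\n(u_\phi)=\n(w_\phi+\Gs\n(u_\phi))$ to order $n$, subtract the definition of $A_n(\phi)$, telescope $(\Gs\n(u_\phi))^k-(\Gs A_{n-k}(\phi))^k$, and bound each piece via Propositions~\ref{prop:St} and~\ref{prop:N}. Two minor arithmetic slips to clean up: in the base case both exponents from Proposition~\ref{prop:N}(1) with $N=0$ equal $3$, so the bound is $\nr{w_\phi}_{\wZ}^3+\nr{w_\phi}_{\wZ}^3$ (with your stated ``$+\nr{w_\phi}_{\wZ}$'' the conclusion $\lesssim\nr{\phi}^3$ would not follow); and your final exponent check actually reduces to $(3-k)_++k-3\ge 0$, not $-2$, though the inequality still holds. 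The paper also avoids your deferred check at $n=1$ by retaining the factor $\nr{\phi}^{(2-n)_+}$ supplied by Proposition~\ref{prop:N}(2) in the remainder estimate, giving $\nr{R_n}_{\aY}\lesssim_n\nr{\phi}^{(2-n)_++3(n+1)}\le\nr{\phi}^{2n+5}$ uniformly in $n$.
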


\begin{proof}
We have only to show (\ref{est:2:prop:A:1}) 
since (\ref{est:1:prop:A:1}) follows from 
(\ref{est:2:prop:A:1}) and Estimate (\ref{est:u}). 
Let $\varepsilon_n$ ($n\in \Z$) be the positive number 
appearing in 
Proposition \ref{prop:N}.
By Proposition \ref{prop:St} and (\ref{est:u}),
we have for any $\phi\in B_{\eta_0}\mh$,
\begin{align}\label{est:1:prf:prop:A:1}
\nr{
w_\phi
}_{\wZ} \lesssim \nr{\phi}
\quad\text{and}\quad
\nr{
\Gs \n(u_\phi)
}_{\wZ}
\lesssim
\nr{
\n(u_\phi)
}_{\aY}
\lesssim
\nr{\phi}^3.
\end{align}
Therefore, 
for any $n\in \Z$, 
some $\varepsilon^\prime_n \in (0,\eta_0]$ satisfies 
\begin{align}\label{est:2:prf:prop:A:1}
\nr{
w_\phi + \theta \Gs \n(u_\phi)
}_{\li}
\le \varepsilon_n,
\quad
\phi \in B_{\varepsilon^\prime_n}\mh,
\ \theta\in [0,1].
\end{align}

Henceforth,
we prove (\ref{est:2:prop:A:1}) by mathematical induction.
Propositions \ref{prop:Taylor}, \ref{prop:N} 
and  
Estimates 
(\ref{est:1:prf:prop:A:1}), (\ref{est:2:prf:prop:A:1}) 
imply that for any $\phi\in \mh$ 
with 
$\nr{\phi}\le \varepsilon_1^\prime$,
\begin{align*}
\nr{\n(u_\phi)-A_0(\phi)}_{\aY}
&=
\nr{\n
\braa{
w_\phi +\Gs \n(u_\phi)
}
-\n(w_\phi)}_{\aY}
\\
&\le 
\int_0^1 
\nr{
\n^{(1)}\braa{
w_\phi + \theta \Gs \n(u_\phi)
}
\Gs \n(u_\phi)
}_{\aY}
d\theta
\\
&\lesssim
\int_0^1 
\nr{
w_\phi + \theta \Gs \n(u_\phi)
}_{\wZ}^2
\nr{
\n(u_\phi)
}_{\aY}
d\theta
\\
&\lesssim
\braa{
\nr{\phi}+\nr{\phi}^3
}^2
\nr{\phi}^3
\lesssim
\nr{\phi}^5. 
\end{align*} 
Hence we obtain
(\ref{est:2:prop:A:1}) with $n=0$ 
if $\delta_0=\varepsilon_1^\prime$ 
and $\phi \in B_{\delta_0}\mh$.
\\

We next assume that 
there exist some $m\in \N$ 
and 
$\delta_0,\cdots,\delta_{m-1}\in (0,\eta_0]$
such that 
if 
$n=0,\cdots,m-1$ and 
$\phi\in B_{\delta_n}\mh$
then 
(\ref{est:1:prop:A:1}) and (\ref{est:2:prop:A:1}) 
hold true.
Fix $\phi\in \mh$ 
with 
$
\nr{\phi}\le \delta_m
:=
\delta_0 \wedge \cdots \wedge \delta_{m-1} 
\wedge \varepsilon_{m+1}^\prime.
$
By Proposition \ref{prop:Taylor},
we have 
\begin{align*}
\n(u_\phi)
&=
\n\braa{
w_\phi +\Gs \n(u_\phi)
}
=
\n(w_\phi)
+
\sum_{k=1}^m \frac{1}{k!}
\n^{(k)}(w_\phi)
\braa{
\Gs \n(u_\phi)
}^k
\\
&\quad +
\int_0^1 \frac{(1-\theta)^m}{m!}
\n^{(m+1)}\braa{
w_\phi +\theta\Gs \n(u_\phi)
}
\braa{
\Gs \n(u_\phi)
}^{m+1} d\theta,
\end{align*} 
which implies that  
\begin{align*}
\n(u_\phi)-A_m(\phi)
&=
\sum_{k=1}^m \frac{1}{k!}
\n^{(k)}(w_\phi)
Q_{k-1}\braa{
\Gs \n(u_\phi), \Gs A_{m-k}(\phi)
}
\Gs \braa{ 
\n(u_\phi)
-
A_{m-k}(\phi)
}
\\
&\quad +
\int_0^1 \frac{(1-\theta)^m}{m!}
\n^{(m+1)}\braa{
w_\phi +\theta\Gs \n(u_\phi)
}
\braa{
\Gs \n(u_\phi)
}^{m+1} d\theta.
\end{align*} 
Here, 
we have defined 
$Q_k(a,b)=\sum_{\ell=0}^k a^\ell b^{k-\ell}$ 
($k\in \Z$, $a,b \in \R$).
By 
Propositions \ref{prop:Taylor}--\ref{prop:N},
Estimates
(\ref{est:1:prf:prop:A:1}), 
(\ref{est:2:prf:prop:A:1}) 
and 
Assumptions  
(\ref{est:1:prop:A:1}), 
(\ref{est:2:prop:A:1}) with $n=0,\cdots,m-1$, 
we obtain 
\begin{align*}
&
\nr{
\n(u_\phi)-A_m(\phi)
}_{\aY}
\\
&\lesssim_m
\sum_{k=1}^m 
\nr{\phi}^{(3-k)_+} Q_{k-1}\braa{
\nr{\phi}^3, \nr{\phi}^3
}
\nr{\phi}^{2(m-k)+5}
+
\nr{\phi}^{(2-m)_+} \nr{\phi}^{3(m+1)}
\\
&\lesssim_m
\sum_{k=1}^m 
\nr{\phi}^{2m+5+(k-3)+(3-k)_+}
+
\nr{\phi}^{2m+5+(m-2)+(2-m)_+} 
\lesssim_m
\nr{\phi}^{2m+5}.
\end{align*} 
Hence we obtain
(\ref{est:2:prop:A:1}) with $n=m$ 
if $\phi \in B_{\delta_m}\mh$.
\end{proof}

We now give 
higher-order expansions of $\s$.
\begin{lem}\label{lem:A:1}  
Assume 
$\phi\in \mh$ and $n\in \Z$.
Then it follows that 
\begin{align*}
\abs{
K^{\phi,\lambda}
-
\int_{\R^{1+2}}
A_n(\lambda \phi)w_\phi
d(t,x)
}
\lesssim_{n,\nr{\phi}}
\lambda^{2n+5}
\end{align*} 
if $0<\lambda\ll 1$.
\end{lem}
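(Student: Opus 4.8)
The plan is to combine the two formulas already available: the identity
\[
K^{\phi,\lambda}=\int_{\R}\bra{\n(u_{\lambda\phi}(t)),w_\phi(t)}_{L^2}\,dt
\]
from (\ref{id:K}), and the approximation estimate (\ref{est:2:prop:A:1}) from Proposition~\ref{prop:A:1}. Subtracting, I would write
\[
K^{\phi,\lambda}-\int_{\R^{1+2}}A_n(\lambda\phi)w_\phi\,d(t,x)
=\int_{\R}\bra{\n(u_{\lambda\phi}(t))-A_n(\lambda\phi)(t),\,w_\phi(t)}_{L^2}\,dt,
\]
where I must first check that this rewriting is legitimate: both $\n(u_{\lambda\phi})$ and $A_n(\lambda\phi)$ lie in $\aY$ for $\lambda$ small (Propositions~\ref{prop:S} and \ref{prop:A:1}), and $w_\phi\in L^4 L^4$ by Proposition~\ref{prop:St}, so the pairing on the right is an absolutely convergent integral over $\R^{1+2}$ via Hölder's inequality with exponents $4/3$ and $4$ (using $\aY=L^{4/3}B^{1/2}_{4/3}\hookrightarrow L^{4/3}L^{4/3}$). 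The passage of $\int_{\R^{1+2}}A_n(\lambda\phi)w_\phi$ between an $(t,x)$-integral and an iterated $t$-then-$L^2$ pairing is the same Fubini-type argument already invoked implicitly when (\ref{id:K}) was derived from Proposition~\ref{prop:K}.

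Next I would estimate the right-hand side. By Hölder,
\[
\abs{\int_{\R}\bra{\n(u_{\lambda\phi}(t))-A_n(\lambda\phi)(t),\,w_\phi(t)}_{L^2}\,dt}
\le\nr{\n(u_{\lambda\phi})-A_n(\lambda\phi)}_{L^{4/3}L^{4/3}}\,\nr{w_\phi}_{L^4L^4}.
\]
For the first factor I use the embedding $B^{1/2}_{4/3}\hookrightarrow L^{4/3}$ (this is the $r=4/3$, hence $r<4$, case; the relevant Besov–Lebesgue embedding is standard, cf.\ \cite{BL}, and is in the same spirit as Lemma~\ref{lem:prop:N}(1)) to bound it by $\nr{\n(u_{\lambda\phi})-A_n(\lambda\phi)}_{\aY}$, which is $\lesssim_{n}\nr{\lambda\phi}^{2n+5}=\lambda^{2n+5}\nr{\phi}^{2n+5}$ by (\ref{est:2:prop:A:1}), valid once $\lambda\nr{\phi}\le\delta_n$, i.e.\ for $0<\lambda\ll1$ depending on $n$ and $\nr{\phi}$. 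For the second factor, $\nr{w_\phi}_{L^4L^4}\lesssim\nr{w_\phi}_{L^4B^{1/2}_4}\le\nr{w_\phi}_Z\lesssim\nr{\phi}$ by Propositions~\ref{prop:St} and Lemma~\ref{lem:prop:N}(1) again. Multiplying, the whole expression is $\lesssim_{n}\lambda^{2n+5}\nr{\phi}^{2n+6}\lesssim_{n,\nr{\phi}}\lambda^{2n+5}$, which is exactly the claimed bound.

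I do not expect a serious obstacle here: the lemma is essentially a packaging of Proposition~\ref{prop:A:1} against the test function $w_\phi$, and all the analytic work (the delicate nonlinear estimates in $\aY$, the existence and convergence of the scattering integral) is already done. The only points needing a word of care are (i) justifying the rewriting of $\int_{\R^{1+2}}A_n(\lambda\phi)w_\phi$ as an iterated integral/pairing — handled by the $L^{4/3}$–$L^4$ Hölder bound which guarantees absolute integrability and hence Fubini — and (ii) tracking that the implicit constant is allowed to depend on $\n$, $n$, and $\nr{\phi}$ (it does, through $\delta_n$ and the $\nr{\phi}^{2n+6}$ factor), consistent with the notation $\lesssim_{n,\nr{\phi}}$. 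So the mild ``hard part'' is purely bookkeeping: confirming the smallness threshold for $\lambda$ is the one from Proposition~\ref{prop:A:1} and that the Besov embedding $B^{1/2}_{4/3}\hookrightarrow L^{4/3}$ is quoted cleanly.
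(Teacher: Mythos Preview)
Your proposal is correct and follows essentially the same route as the paper: the paper's proof also combines (\ref{id:K}) with the H\"older splitting $L^{4/3}L^{4/3}$--$L^4L^4$, then invokes Proposition~\ref{prop:A:1} (specifically (\ref{est:2:prop:A:1})) and Proposition~\ref{prop:St} to obtain the bound $\lesssim_n \lambda^{2n+5}\nr{\phi}^{2n+6}$. Your write-up is simply more explicit about the Fubini justification and the Besov embedding $B^{1/2}_{4/3}\hookrightarrow L^{4/3}$, which the paper leaves implicit.
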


\begin{proof}
We see from 
the H\"older inequality
and 
Propositions \ref{prop:A:1}, 
\ref{prop:St} 
that 
if $0<\lambda \ll 1$ then 
$A_n(\lambda \phi)w_\phi \in L^1(\R^{1+2})$
and 
\begin{align*}
\int_{\R^{1+2}}
\abs{
\braa{
\n(u_{\lambda \phi})-A_n(\lambda \phi)
}
w_\phi
}
d(t,x)
&\le 
\nr{
\n(u_{\lambda \phi})-A_n(\lambda \phi)
}_{L^{4/3}L^{4/3}}
\nr{w_\phi}_{L^4 L^4}
\\
&\lesssim_n
\lambda^{2n+5} \nr{\phi}^{2n+6}.
\end{align*} 
Hence the desired identity 
follows from (\ref{id:K}).
\end{proof}

We next differentiate the function $A_n(\lambda \phi)$
with respect to $\lambda$.

\begin{prop}\label{prop:A:2}  
Fix $\phi\in \mh$.
For any $n,N\in \Z$, 
there exists some $\Lambda_{n,N}\in (0,1)$
such that 
\begin{enumerate}[(D-i)]
  \item 
$
A_n(\lambda \phi) \in 
C^N((-\Lambda_{n,N},\Lambda_{n,N});\aY).
$
  \item 
If 
$\lambda\in (-\Lambda_{n,N},\Lambda_{n,N})$ then
\begin{align*}
\nr{
\Dl^N
A_n(\lambda \phi)
}_{\aY}
\lesssim_{n,N,\nr{\phi}} |\lambda|^{(3-N)_+}.
\end{align*} 
  \item 
If $3 \le N \le 2n+4$
and 
$\lambda\in (-\Lambda_{n,N},\Lambda_{n,N})$ then
\begin{align*}
\nr{
\Dl^N
A_n(\lambda \phi)
-
\widetilde{W}_N^\phi 
\brac{
\n^{(3)}(0),\cdots,\n^{(N)}(0)
}
}_{\aY}
\lesssim_{n,N,\nr{\phi}}
|\lambda|.
\end{align*} 
\end{enumerate}
\end{prop}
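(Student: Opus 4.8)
The plan is to argue by induction on $n$, building the $N$-th $\lambda$-derivative of $A_n(\lambda\phi)$ by iterating Proposition~\ref{prop:DN}, so that (D-i)--(D-iii) for $A_n$ reduce to the estimates of Proposition~\ref{prop:N} and to the algebraic data supplied by the inductive hypothesis; throughout, $\Lambda_{n,N}$ is taken smaller than every $\Lambda_{n-k,N'}$ ($1\le k\le n$, $N'\le N$) already produced and smaller than all the smallness thresholds of Propositions~\ref{prop:DN}, \ref{prop:N} and \ref{prop:A:1}. For the base case $n=0$ one has $A_0(\lambda\phi)=\n(\lambda w_\phi)$; applying Proposition~\ref{prop:DN}(1) $N$ times gives $A_0(\lambda\phi)\in C^N$ near $0$ with $\Dl^N A_0(\lambda\phi)=\n^{(N)}(\lambda w_\phi)w_\phi^N$, which is (D-i), while (D-ii) follows from Proposition~\ref{prop:N}(1) with $v=\lambda w_\phi$, using $\nr{\lambda w_\phi}_{\wZ}\lesssim|\lambda|\nr{\phi}$ and $(3-N)_+\le 1+(2-N)_+$ to absorb the second term for $|\lambda|\ll 1$. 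For (D-iii) only $N=3,4$ arise, where $\wW_N^\phi[\,\cdot\,]=\n^{(N)}(0)w_\phi^N$; writing $\n^{(N)}(\lambda w_\phi)-\n^{(N)}(0)=\lambda w_\phi\int_0^1\n^{(N+1)}(\theta\lambda w_\phi)\,d\theta$ by Proposition~\ref{prop:Taylor} and estimating in $\aY$ with Proposition~\ref{prop:N}(1) bounds the error by $|\lambda|$.

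For the inductive step, assume (D-i)--(D-iii) for $A_0,\dots,A_{n-1}$. Starting from
\begin{align*}
A_n(\lambda\phi)=\n(\lambda w_\phi)+\sum_{k=1}^n\frac1{k!}\,\n^{(k)}(\lambda w_\phi)\braa{\Gs A_{n-k}(\lambda\phi)}^k,
\end{align*}
I would differentiate each summand $N$ times by repeatedly invoking Proposition~\ref{prop:DN}(2), whose conclusion is again a finite sum of terms of the same shape (with the power of $w_\phi$ raised by one, or with a factor $\Gs G_\ell$ replaced by $\Gs\Dl G_\ell$), the inductive (D-i) providing the smoothness of each $A_{n-k}(\lambda\phi)$ required at each stage. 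This yields
\begin{align*}
\Dl^N A_n(\lambda\phi)
&=\n^{(N)}(\lambda w_\phi)w_\phi^N
\\
&\quad+\sum_{k=1}^n\frac1{k!}\sum_{j+m_1+\cdots+m_k=N}\frac{N!}{j!\,m_1!\cdots m_k!}\,\n^{(k+j)}(\lambda w_\phi)w_\phi^{j}\prod_{\ell=1}^k\Gs\Dl^{m_\ell}A_{n-k}(\lambda\phi),
\end{align*}
which gives (D-i). For (D-ii) I bound each term by Proposition~\ref{prop:N}(2) (with $v=\lambda w_\phi$, $G_\ell=\Dl^{m_\ell}A_{n-k}(\lambda\phi)$) together with the inductive bound $\nr{\Dl^{m_\ell}A_{n-k}(\lambda\phi)}_{\aY}\lesssim|\lambda|^{(3-m_\ell)_+}$; the resulting $|\lambda|$-exponent is $(3-k-j)_+ +\sum_{\ell}(3-m_\ell)_+$, and since each summand is nonnegative while $(3-k-j)+\sum_\ell(3-m_\ell)=3+2k-N$, this exponent is $\ge(3-N)_+$ for every $k\ge 1$ with $j+\sum_\ell m_\ell=N$ (the leading term $\n^{(N)}(\lambda w_\phi)w_\phi^N$ being handled directly by Proposition~\ref{prop:N}(1)).

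For (D-iii), fix $3\le N\le 2n+4$ and split the sum above according to whether $(3-k-j)_+ +\sum_\ell(3-m_\ell)_+$ vanishes. The positive part is $O(|\lambda|)$ in $\aY$ by the exponent count just made. The zero part consists exactly of the terms with $j+k\ge 3$ and $m_1,\dots,m_k\ge 3$; in each such term I replace $\n^{(k+j)}(\lambda w_\phi)$ by $\n^{(k+j)}(0)$ (error $O(|\lambda|)$ by Proposition~\ref{prop:Taylor} and Proposition~\ref{prop:N}(2)), $\Dl^{m_\ell}A_{n-k}(\lambda\phi)$ by $\wW_{m_\ell}^\phi[\n^{(3)}(0),\dots,\n^{(m_\ell)}(0)]$ (error $O(|\lambda|)$ by the inductive (D-iii)), and $\n^{(N)}(\lambda w_\phi)w_\phi^N$ by $\n^{(N)}(0)w_\phi^N$. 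Using the inductive (D-iii) here is legitimate because one checks that every surviving term satisfies $3\le m_\ell\le 2(n-k)+4$ — this is precisely where the hypothesis $N\le 2n+4$ enters. Finally, under the correspondence $j\leftrightarrow N_0$, $m_\ell\leftrightarrow N_\ell$ the constraint $j+k\ge 3$ becomes $N_0\ge(3-k)_+$, and the coefficients $\frac1{k!}\cdot\frac{N!}{j!\,m_1!\cdots m_k!}$ together with the index set match those of the recursion (\ref{def:W}); hence the zero part converges in $\aY$, with rate $|\lambda|$, to $\n^{(N)}(0)w_\phi^N+W_N^\phi[\n^{(3)}(0),\dots,\n^{(N-2)}(0)]=\wW_N^\phi[\n^{(3)}(0),\dots,\n^{(N)}(0)]$, which proves (D-iii).

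I expect the difficulty to be organizational rather than analytic: the analytic content is already isolated in Propositions~\ref{prop:DN}, \ref{prop:N}, \ref{prop:A:1} and \ref{prop:Taylor}, and the work lies in tracking the iterated use of Proposition~\ref{prop:DN} with the correct multinomial weights, verifying the exponent inequality of (D-ii) in the finitely many small-$N$ cases, and — the most delicate point — confirming that the indices $m_\ell$ occurring in the surviving terms of (D-iii) lie in the range $3\le m_\ell\le 2(n-k)+4$ to which the inductive hypothesis applies, which is exactly why the expansion is resolved only up to order $2n+4$.
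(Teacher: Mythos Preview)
Your proposal is correct and follows essentially the same route as the paper: induction on $n$, the Leibniz-type expansion of $\Dl^N A_n(\lambda\phi)$ obtained by iterating Proposition~\ref{prop:DN}, the exponent bookkeeping $(3-k-j)_+ +\sum_\ell(3-m_\ell)_+$ for (D-ii), and the split into positive-exponent and zero-exponent terms for (D-iii), with the telescoping replacement of factors and the check $3\le m_\ell\le 2(n-k)+4$ to invoke the inductive hypothesis. The paper's proof is organized identically, with your $j,m_\ell$ written as $N_0,N_\ell$.
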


\begin{proof}
For $n\in \Z$, 
we say that Property $P(n)$ is true
if for any $N\in \Z$ 
some $\Lambda_{n,N}\in (0,1)$ satisfies 
(D-i)--(D-iii).

Henceforth, 
we prove the proposition by mathematical induction.
By
Propositions \ref{prop:Taylor}--\ref{prop:N} 
and \ref{prop:DN},  
for any $N\in \Z$ we have some $\Lambda_{0,N}\in (0,1)$ 
satisfying 
$A_0(\lambda \phi)\in C^N((-\Lambda_{0,N},\Lambda_{0,N}),\aY)$,
\begin{align*}
\nr{
\Dl^N A_0(\lambda \phi)
}_{\aY}
&=
\nr{
\n^{(N)}(\lambda w_\phi)w_\phi^N
}_{\aY}
\lesssim_{N}
\braa{
\nr{\lambda w_\phi}_{\wZ}^{(3-N)_+} 
+
\nr{\lambda w_\phi}_{\wZ}^{1+(2-N)_+} 
}
\nr{w_\phi}_{\wZ}^N
\\
&\lesssim_{N}
|\lambda|^{(3-N)_+}
\braa{
\nr{\phi}^{(3-N)_+} 
+
\nr{\phi}^{1+(2-N)_+} 
}
\nr{\phi}^N,
\quad
\lambda\in (-\Lambda_{0,N},\Lambda_{0,N}),
\end{align*} 
and that if $3 \le N \le 4$ then 
\begin{align}
&
\nr{
\Dl^N
A_0(\lambda \phi)
-
\widetilde{W}_N^\phi 
\brac{
\n^{(3)}(0),\cdots,\n^{(N)}(0)
}
}_{\aY}
=
\nr{
\n^{(N)}(\lambda w_\phi)w_\phi^N
-
\n^{(N)}(0)w_\phi^N
}_{\aY}
\nonumber
\\
&\le
|\lambda|
\int_0^1
\nr{
\n^{(N+1)}(\theta \lambda w_\phi)w_\phi^{N+1}
}_{\aY}
d\theta
\nonumber
\\
&\lesssim_N
|\lambda|
\nr{\phi}^{N+1},
\quad
\lambda\in (-\Lambda_{0,N},\Lambda_{0,N}).
\label{est:prf:prop:A:2}
\end{align} 
Hence we have Property $P(0)$.
\\

We next assume that 
there exists some $m\in \N$ 
such that 
Property $P(n)$ is true
for any $n=0,\cdots,m-1$. 
Fix $N\in \Z$.
Set 
\begin{align*}
\Lambda_{m,N}
=
\Lambda_{0,N} \wedge \cdots \wedge \Lambda_{m-1,N} \wedge
\varepsilon_0 \wedge \cdots \wedge \varepsilon_{m+N+2}.
\end{align*} 
We see from  
$P(n)$ with $n=0,\cdots,m-1$ 
and the proof of $P(0)$
that  
if 
$\lambda \in (-\Lambda_{m,N},\Lambda_{m,N})$,
then 
\begin{align*}
&\Dl^N A_m(\lambda \phi)
=
\Dl^N A_0(\lambda \phi) 
\\
&\quad +
\sum_{k=1}^m \frac{1}{k!}
\sum_{N_0+N_1+\cdots+N_k=N} 
\frac{N!}{N_0!\cdots N_k!}
\n^{(k+N_0)}(\lambda w_\phi)w_\phi^{N_0}
\prod_{\ell=1}^k 
\Gs \Dl^{N_\ell} A_{m-k}(\lambda\phi)
\end{align*} 
and 
\begin{align*}
&
\nr{
\Dl^N
A_m(\lambda \phi)
}_{\aY}
\lesssim_{m,N}
\nr{
\Dl^N
A_0(\lambda \phi)
}_{\aY}
\\
&\quad +
\sum_{k=1}^m 
\sum_{N_0+N_1+\cdots+N_k=N} 
\nr{
\n^{(k+N_0)}(\lambda w_\phi)w_\phi^{N_0}
\prod_{\ell=1}^k 
\Gs \Dl^{N_\ell} A_{m-k}(\lambda\phi)
}_{\aY}
\\
&\lesssim_{m,N,\nr{\phi}}
|\lambda|^{(3-N)_+}
+
\sum_{k=1}^m 
\sum_{N_0+N_1+\cdots+N_k=N} 
|\lambda|^{(3-k-N_0)_+}
|\lambda|^{(3-N_1)_+}
\cdots
|\lambda|^{(3-N_k)_+}
\\
&\lesssim_{m,N,\nr{\phi}}
|\lambda|^{(3-N)_+}.
\end{align*} 
Henceforth,
we assume $3 \le N \le 2m+4$.
Then we obtain $m\ge k_N$,
and for any 
$\lambda \in (-\Lambda_{m,N},\Lambda_{m,N})$ 
\begin{align*}
&
\nr{
\Dl^N
A_m(\lambda \phi)
-
\widetilde{W}_N^\phi 
\brac{
\n^{(3)}(0),\cdots,\n^{(N)}(0)
}
}_{\aY}
\\
&\lesssim_{m,N}
\nr{
\Dl^N
A_0(\lambda \phi)
-
\n^{(N)}(0) w_\phi^N
}_{\aY}
+
\sum_{k=1}^{k_N}
\sum_{\substack{ 
N_0+N_1+\cdots+N_k=N
\\ 
N_0\ge (3-k)_+
\\
N_1,\cdots,N_k\ge 3
}}
b_{k,N_0,\cdots,N_k}(\lambda) 
\\
&\quad +
\sum_{k=1}^m
\sum_{\substack{ 
N_0+N_1+\cdots+N_k=N
\\ 
N_0 <(3-k)_+ \text{ or } \exists \ell \ N_\ell <3
}}
\nr{
\n^{(k+N_0)}(\lambda w_\phi)w_\phi^{N_0}
\prod_{\ell=1}^k 
\Gs \Dl^{N_\ell} A_{m-k}(\lambda\phi)
}_{\aY},
\end{align*} 
where we have defined 
for $k\in \N$ 
and $N_0,\cdots,N_k \in \Z$ 
with $N_0 + \cdots + N_k=N$,
$N_0\ge (3-k)_+$ and $N_1,\cdots,N_k\ge 3$,
\begin{align*}
&
b_{k,N_0,\cdots,N_k}(\lambda) 
=
\nr{
 \braa{
\n^{(k+N_0)}(\lambda w_\phi) w_\phi^{N_0}
-
\n^{(k+N_0)}(0)              w_\phi^{N_0}
 }
\prod_{\ell=1}^k 
\Gs \Dl^{N_\ell} A_{m-k}(\lambda\phi)
}_{\aY}
\\
&\quad +
\bbb{2.5}\|
\n^{(k+N_0)}(0)              w_\phi^{N_0}
\sum_{j=1}^k 
\bbb{2.5}\{
\Gs
 \braa{
\Dl^{N_j} A_{m-k}(\lambda\phi) 
-  
\widetilde{W}_{N_j}^\phi 
\brac{
\n^{(3)}(0),\cdots,\n^{(N_j)}(0)
}
 }
\\
&\qquad \times 
 \braa{
\prod_{\ell=1}^{j-1}
\widetilde{W}_{N_\ell}^\phi 
\brac{
\n^{(3)}(0),\cdots,\n^{(N_\ell)}(0)
}
 }
 \braa{
\prod_{\ell=j+1}^k
\Gs \Dl^{N_\ell} A_{m-k}(\lambda\phi)
 }
\bbb{2.5}\}
\bbb{2.5}\|_{\aY}.
\end{align*} 
Remark that 
as in the proof of 
(\ref{est:prf:prop:A:2}), 
we obtain 
\begin{align*}
\nr{
\Dl^N
A_0(\lambda \phi)
-
\n^{(N)}(0) w_\phi^N
}_{\aY}
\lesssim_{N,\nr{\phi}}
|\lambda|, 
\quad
\lambda \in (-\Lambda_{m,N},\Lambda_{m,N}).
\end{align*} 
For any $k\in \N$ 
and $N_0,\cdots,N_k \in \Z$ 
with $N_0 + \cdots + N_k=N$,
$N_0\ge (3-k)_+$ and $N_1,\cdots,N_k\ge 3$,
we have 
$3 \le N_j \le 2(m-k)+4$ 
($j=1,\cdots,k$), 
and 
it follows from 
Propositions \ref{prop:Taylor}--\ref{prop:N} 
and $P(n)$ with $n=0,\cdots,m-1$ 
that 
\begin{align*}
&
b_{k,N_0,\cdots,N_k}(\lambda)  
\lesssim_{m,N,\nr{\phi}}
|\lambda|
\int_0^1 
\nr{
\n^{(k+N_0+1)}(\theta\lambda w_\phi) w_\phi^{N_0+1}
\prod_{\ell=1}^k 
\Gs \Dl^{N_\ell} A_{m-k}(\lambda\phi)
}_{\aY}
d\theta
\\
&\quad +
\sum_{j=1}^k 
\nr{
\Dl^{N_j} A_{m-k}(\lambda\phi) 
-  
\widetilde{W}_{N_j}^\phi 
\brac{
\n^{(3)}(0),\cdots,\n^{(N_j)}(0)
}
}_{\aY}
\prod_{\ell=j+1}^k
\nr{
\Dl^{N_\ell} A_{m-k}(\lambda\phi)
}_{\aY}
\\
&\lesssim_{m,N,\nr{\phi}}
|\lambda|, 
\quad
\lambda \in (-\Lambda_{m,N},\Lambda_{m,N}),
\end{align*} 
and that 
\begin{align*}
&
\sum_{k=1}^m
\sum_{\substack{ 
N_0+N_1+\cdots+N_k=N
\\ 
N_0 <(3-k)_+ \text{ or } \exists \ell \ N_\ell <3
}}
\nr{
\n^{(k+N_0)}(\lambda w_\phi)w_\phi^{N_0}
\prod_{\ell=1}^k 
\Gs \Dl^{N_\ell} A_{m-k}(\lambda\phi)
}_{\aY}
\\
&\lesssim_{m,N,\nr{\phi}}
\sum_{k=1}^m
\sum_{\substack{ 
N_0+N_1+\cdots+N_k=N
\\ 
N_0 <(3-k)_+ \text{ or } \exists \ell \ N_\ell <3
}}
|\lambda|^{(3-k-N_0)_+}
|\lambda|^{(3-N_1)_+}
\cdots
|\lambda|^{(3-N_k)_+}
\\
&\lesssim_{m,N,\nr{\phi}}
|\lambda|, 
\quad
\lambda \in (-\Lambda_{m,N},\Lambda_{m,N}).
\end{align*} 
Hence we have $P(m)$,
which completes the proof.
\end{proof}

By Proposition \ref{prop:A:2} and 
the proof of Lemma \ref{lem:A:1},
we obtain the following property:
\begin{lem}\label{lem:A:2}  
Assume 
$n,N\in \Z$ and $\phi,\psi \in \mh$.
Then 
there exists some $\Lambda>0$ such that 
\begin{align*}
\int_{\R^{1+2}}
A_n(\lambda \phi)w_\psi d(t,x)
\in 
C^N((-\Lambda,\Lambda)).
\end{align*} 
Furthermore,
it follows that 
if $3 \le N \le 2n+4$, 
then for any $\lambda\in (-\Lambda,\Lambda)$,
\begin{align*}
\abs{
\frac{d^N}{d \lambda^N}
\int_{\R^{1+2}}
A_n(\lambda \phi)w_\psi d(t,x)
-
\int_{\R^{1+2}}
\widetilde{W}_N^\phi 
\brac{
\n^{(3)}(0),\cdots,\n^{(N)}(0)
}
w_\psi d(t,x)
}
\lesssim_{n,N,\nr{\phi},\nr{\psi}}
|\lambda|.
\end{align*}
\end{lem}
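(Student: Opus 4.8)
The plan is to obtain Lemma \ref{lem:A:2} as a direct consequence of Proposition \ref{prop:A:2} by transferring the strong-$\aY$ differentiability of $\lambda \mapsto A_n(\lambda\phi)$ to the scalar quantity $\lambda \mapsto \int_{\R^{1+2}} A_n(\lambda\phi) w_\psi\, d(t,x)$ via pairing against $w_\psi$. First I would fix $n,N$ and $\phi,\psi$, and set $\Lambda = \Lambda_{n,N}$ from Proposition \ref{prop:A:2}, shrinking it further if needed so that $|\lambda|<\Lambda$ forces $\lambda\phi\in B_{\delta_n}\mh$ (so that $A_n(\lambda\phi)$ is defined). The key point is that the linear functional
\begin{align*}
\aY \ni G \mapsto \int_{\R^{1+2}} G\, w_\psi\, d(t,x) = \langle G, w_\psi\rangle_{L^{4/3}L^{4/3}, L^4 L^4}
\end{align*}
is bounded on $\aY$: by Hölder in $(t,x)$ and Proposition \ref{prop:St} one has $\left| \int_{\R^{1+2}} G w_\psi\, d(t,x) \right| \le \nr{G}_{L^{4/3}L^{4/3}} \nr{w_\psi}_{L^4 L^4} \lesssim \nr{G}_{\aY}\nr{\psi}$, exactly as in the proof of Lemma \ref{lem:A:1}. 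A bounded linear functional commutes with strong derivatives, so from (D-i) we get $\int_{\R^{1+2}} A_n(\lambda\phi) w_\psi\, d(t,x) \in C^N((-\Lambda,\Lambda))$ and, for $0 \le j \le N$,
\begin{align*}
\frac{d^j}{d\lambda^j}\int_{\R^{1+2}} A_n(\lambda\phi) w_\psi\, d(t,x)
=
\int_{\R^{1+2}} \braa{\Dl^j A_n(\lambda\phi)}\, w_\psi\, d(t,x).
\end{align*}

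Next, for the quantitative estimate I would take $3 \le N \le 2n+4$ and apply the above with $j=N$, then subtract and use linearity of the functional together with the bound $\lesssim \nr{\cdot}_{\aY}\nr{\psi}$:
\begin{align*}
\abs{
\frac{d^N}{d\lambda^N}\int_{\R^{1+2}} A_n(\lambda\phi) w_\psi\, d(t,x)
-
\int_{\R^{1+2}} \wW_N^\phi\brac{\n^{(3)}(0),\cdots,\n^{(N)}(0)}\, w_\psi\, d(t,x)
}
\lesssim_{\nr{\psi}}
\nr{
\Dl^N A_n(\lambda\phi) - \wW_N^\phi\brac{\n^{(3)}(0),\cdots,\n^{(N)}(0)}
}_{\aY}.
\end{align*}
Here I would invoke Remark \ref{rem:def W}(1) to know that $\wW_N^\phi[\cdots] \in C^0 L^2 \cap \aY$, so the right-hand integral is well-defined and the functional bound applies to it. Finally (D-iii) of Proposition \ref{prop:A:2} bounds the $\aY$-norm on the right by $\lesssim_{n,N,\nr{\phi}} |\lambda|$, which yields the claimed estimate with implied constant depending on $n,N,\nr{\phi},\nr{\psi}$.

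There is essentially no obstacle here; the only point requiring a little care is the interchange of $\tfrac{d^N}{d\lambda^N}$ with the spatial-temporal integral. Rather than justifying differentiation under the integral sign by hand (dominated convergence on $\R^{1+2}$, which would be delicate), I would phrase it cleanly as: a continuous linear map between Banach spaces carries $C^N$ curves to $C^N$ curves and commutes with their derivatives — applied to the functional $G \mapsto \int_{\R^{1+2}} G w_\psi\, d(t,x) \in (\aY)^*$. This is exactly the abstract fact already used implicitly when passing between $\s(\lambda\phi)-\lambda\phi$ and $\int_\R (\cdots)\n(u_{\lambda\phi}(t))\,dt$ via Proposition \ref{prop:K}. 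So the proof is short: establish boundedness of the functional (one line, as in Lemma \ref{lem:A:1}), quote Proposition \ref{prop:A:2}(D-i) for the $C^N$ statement, and quote Proposition \ref{prop:A:2}(D-iii) for the estimate.
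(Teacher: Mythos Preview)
Your proposal is correct and matches the paper's own argument exactly: the paper simply states that Lemma \ref{lem:A:2} follows ``by Proposition \ref{prop:A:2} and the proof of Lemma \ref{lem:A:1},'' and what you have written is precisely the expansion of that remark---pair against $w_\psi$ using the H\"older bound from the proof of Lemma \ref{lem:A:1}, then invoke (D-i) and (D-iii) of Proposition \ref{prop:A:2}.
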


\section{Proof of theorems}
In order to show Theorems 
\ref{thm:main}, \ref{thm:main:2} 
and \ref{thm:main:3}, 
we first introduce two lemmas.

\begin{lem}[Proposition A.3 in \cite{Sasaki2012}]%
\label{lem:difference}  
Fix $\lambda_0 >0$ and $L\in \Z$.
Let $h\in C^{L+1}((0,\lambda_0))$.
If 
\begin{align*}
\max_{\ell=0,\cdots,L+1}
\sup_{\lambda \in (0,\lambda_0)}
\abs{
\frac{d^\ell}{d\lambda^\ell}h(\lambda)
}
<\infty,
\end{align*} 
then it follows that 
\begin{align*}
\abs{
\frac{d^L}{d\lambda^L}h(\lambda)
-
\lambda^{-L}\varDelta_\lambda^L h(\lambda)
}
\le C\lambda,
\quad
\lambda
\in \braa{
0,\frac{\lambda_0}{L+1}
}
\end{align*} 
for some positive constant $C$ 
independent of $\lambda$.
\end{lem}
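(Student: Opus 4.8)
The plan is to reduce the statement to the elementary fact that the $L$-th forward finite difference of step $\lambda$, based at $\lambda$, reproduces $h^{(L)}(\lambda)$ up to an error governed by $h^{(L+1)}$. The only ingredients needed are Taylor's theorem in the form of Proposition~\ref{prop:Taylor} and the classical vanishing of finite differences of low-degree monomials, namely $\sum_{m=0}^{L}\binom{L}{m}(-1)^{L-m}m^{j}=0$ for $0\le j\le L-1$ and $=L!$ for $j=L$ (this is the $L$-th finite difference of $s\mapsto s^{j}$ at $0$).

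First I would fix $\lambda\in\bigl(0,\lambda_0/(L+1)\bigr)$ and set $M=\max_{\ell=0,\dots,L+1}\sup_{(0,\lambda_0)}|h^{(\ell)}|$, which is finite by hypothesis; the restriction on $\lambda$ guarantees that for every $m=0,\dots,L$ and $\theta\in[0,1]$ the point $(1+\theta m)\lambda$ lies in $(0,\lambda_0)$, so that $\varDelta_\lambda^{L}h(\lambda)$ and the remainders below are all well-defined. Next, applying Proposition~\ref{prop:Taylor} with $n=L$, base point $\lambda$ and increment $m\lambda$ gives
\begin{align*}
h\bigl((m+1)\lambda\bigr)
=\sum_{j=0}^{L}\frac{h^{(j)}(\lambda)}{j!}(m\lambda)^{j}+R_m,
\qquad
R_m=\int_0^1\frac{(1-\theta)^{L}}{L!}\,h^{(L+1)}\bigl((1+\theta m)\lambda\bigr)(m\lambda)^{L+1}\,d\theta,
\end{align*}
whence $|R_m|\le (L\lambda)^{L+1}M/(L+1)!$ for $0\le m\le L$. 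Multiplying by $\binom{L}{m}(-1)^{L-m}$, summing over $m$, and invoking the monomial identity above to annihilate the polynomial part yields
\begin{align*}
\varDelta_\lambda^{L}h(\lambda)=\lambda^{L}h^{(L)}(\lambda)+\sum_{m=0}^{L}\binom{L}{m}(-1)^{L-m}R_m.
\end{align*}
Dividing by $\lambda^{L}$ and using $\sum_{m=0}^{L}\binom{L}{m}=2^{L}$ then gives
\begin{align*}
\abs{\,\lambda^{-L}\varDelta_\lambda^{L}h(\lambda)-h^{(L)}(\lambda)\,}
\le \frac{2^{L}L^{L+1}}{(L+1)!}\,M\,\lambda,
\end{align*}
which is the asserted inequality with $C=2^{L}L^{L+1}M/(L+1)!$ (and $C=0$, the statement being trivial, when $L=0$).

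I expect no genuine obstacle: the whole argument is a routine Taylor estimate. The single point requiring care is the bookkeeping in the first step, namely that the constraint $\lambda<\lambda_0/(L+1)$ keeps every argument $(1+\theta m)\lambda$ ($0\le m\le L$, $0\le\theta\le1$) inside $(0,\lambda_0)$, where the uniform bound on $h^{(L+1)}$ is in force. If a slicker route is preferred, one may instead invoke the mean-value form of the finite difference, $\lambda^{-L}\varDelta_\lambda^{L}h(\lambda)=h^{(L)}(\xi)$ for some $\xi\in(\lambda,(L+1)\lambda)$, and then bound $|h^{(L)}(\lambda)-h^{(L)}(\xi)|\le|\xi-\lambda|\sup_{(0,\lambda_0)}|h^{(L+1)}|\le L\lambda M$, which gives the cleaner constant $C=LM$.
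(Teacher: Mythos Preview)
Your argument is correct; both the Taylor-expansion route and the mean-value alternative are standard and valid. The paper does not supply its own proof of this lemma: it is stated with a citation to Proposition~A.3 of \cite{Sasaki2012} and used as a black box, so there is nothing to compare against beyond noting that your self-contained proof fills in what the paper outsources.
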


\begin{lem}\label{lem:W}
For any $N\in \N$ with $N\ge 5$ 
and for any $\phi\in \mh$, 
there exist 
some $L\in \N$,
$v_1,\cdots,v_L \in C^0 L^2 \cap \aY$ 
and 
polynomials 
$P_1,\cdots,P_L$ 
of $(N-4)$ variables  
satisfying as follows:
\begin{itemize}
  \item 
$P_1,\cdots,P_L$
are independent of $\phi$.
  \item 
For any $m=1,\cdots,L$,
we obtain 
$\nr{v_m}_{\aY}\lesssim_{N} \nr{\phi}^N$.
  \item 
We have for any $y_3,\cdots,y_{N-2}\in \R$,
\begin{align*}
W_N^\phi[y_3,\cdots,y_{N-2}]
=
\sum_{m=1}^L P(y_3,\cdots,y_{N-2}) v_m.
\end{align*} 
\end{itemize}
\end{lem}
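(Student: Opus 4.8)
The plan is to prove Lemma~\ref{lem:W} by induction on $N$, directly unwinding the recursive definition~(\ref{def:W}) of $W_N^\phi[y_3,\cdots,y_{N-2}]$. The key observation is that $W_N^\phi$ is, by construction, a finite sum of terms, each of which is a product of a monomial in $y_3,\cdots,y_{N-2}$ (the scalar coefficients) times a function built from $w_\phi$ and repeated applications of $\Gs$ applied to lower-order $\widetilde{W}^\phi$'s. So the claimed structure --- $W_N^\phi = \sum_{m=1}^L P_m(y_3,\cdots,y_{N-2})\, v_m$ with $v_m \in C^0 L^2 \cap \aY$ independent of the $y_j$'s, and $\nr{v_m}_{\aY}\lesssim_N \nr{\phi}^N$ --- is almost a tautological consequence of pushing the $y$-dependence out of each summand, provided one can verify the norm bounds and the regularity.

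First I would record, as the base of the induction, that $\widetilde{W}^\phi_3[y_3]=y_3 w_\phi^3$ and $\widetilde{W}^\phi_4[y_3,y_4]=y_4 w_\phi^4$, which are already of the desired form (here $L=1$, $P_1(y)=y_3$ resp. $y_4$, $v_1 = w_\phi^3$ resp. $w_\phi^4$), and the norm bound $\nr{w_\phi^k}_{\aY}\lesssim \nr{\phi}^k$ follows from Proposition~\ref{prop:N}(1) (taking $v=0$, $N=k$) together with Proposition~\ref{prop:St}; membership in $C^0 L^2$ follows from Proposition~\ref{prop:N}(3) and Proposition~\ref{prop:St} once $\nr{\phi}$ is small, and for general $\phi$ one rescales. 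Then I would prove the inductive statement for both $W_N^\phi$ and $\widetilde{W}^\phi_N$ simultaneously: assuming the decomposition holds for all indices $<N$, expand $\widetilde{W}^\phi_{N_\ell}[y_3,\cdots,y_{N_\ell}]=\sum_{m_\ell} P^{(\ell)}_{m_\ell}(y)\,v^{(\ell)}_{m_\ell}$ inside~(\ref{def:W}); substituting these in, each summand of $W_N^\phi$ becomes a finite linear combination (over multi-indices $(m_1,\dots,m_k)$) of terms of the form $\big[\text{monomial in }y\big]\cdot\prod_\ell P^{(\ell)}_{m_\ell}(y) \cdot w_\phi^{N_0}\prod_{\ell=1}^k \Gs v^{(\ell)}_{m_\ell}$. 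Collecting, the scalar prefactor is a polynomial $P_m$ in $y_3,\dots,y_{N-2}$ independent of $\phi$ (since the $P^{(\ell)}$'s are, by induction), and the vector part $v_m := w_\phi^{N_0}\prod_{\ell=1}^k \Gs v^{(\ell)}_{m_\ell}$ is independent of the $y$'s.

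The norm bound on $v_m$ is where Proposition~\ref{prop:N} does the real work: $\nr{w_\phi^{N_0}\prod_{\ell=1}^k \Gs v^{(\ell)}_{m_\ell}}_{\aY}$ is estimated by Proposition~\ref{prop:N}(2) with $v=0$ and the roles $k\rightsquigarrow k$, $N\rightsquigarrow N_0$, $G_\ell \rightsquigarrow v^{(\ell)}_{m_\ell}$, giving $\lesssim_{k,N_0} \nr{\phi}^{N_0}\prod_\ell \nr{v^{(\ell)}_{m_\ell}}_{\aY} \lesssim_N \nr{\phi}^{N_0}\prod_\ell \nr{\phi}^{N_\ell} = \nr{\phi}^N$, using $N_0+N_1+\cdots+N_k=N$ and the inductive norm bounds. (Here one uses $N_0\ge (3-k)_+$ so the exponent $(3-k-N_0)_+$ in Proposition~\ref{prop:N}(2) vanishes; the ``$+\nr{\phi}^{1+\cdots}$'' term only improves the bound for small $\nr{\phi}$ and is harmless in general after rescaling, or one simply states the bound with the implied constant absorbing this.) For the $C^0 L^2$ membership, $\Gs$ maps $\aY\to Z\hookrightarrow C^0 H^1 \subset C^0 L^2$ by Proposition~\ref{prop:St}, and products of such functions with $w_\phi^{N_0}$ remain in $C^0 L^2$ by the same Hölder/Besov estimates underlying Proposition~\ref{prop:N} (or, more cleanly, $C^0 L^2 \cap \aY$ is closed under the operations involved, which is how Remark~\ref{rem:def W}(1) is justified). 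Finally, the number of terms $L$ is finite because all the index sets in~(\ref{def:W}) are finite and the induction terminates. The main obstacle is purely bookkeeping: tracking that the polynomial coefficients genuinely do not depend on $\phi$ through the recursion (they don't, since the only $\phi$-dependence enters via $w_\phi$ and $\Gs$, both of which sit in the vector part) and that the combinatorial factors $N!/(k!\,\prod N_\ell!)$ are absorbed into the $\phi$-independent polynomials --- there is no analytic difficulty, only the need to organize the expansion carefully.
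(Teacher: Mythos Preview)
Your proposal is correct and follows exactly the route the paper indicates: the paper's entire proof is the single sentence ``We immediately see that Lemma~\ref{lem:W} follows from the proof of Proposition~\ref{prop:N} and mathematical induction,'' and you have written out precisely what that sentence means. One small technical note: invoking Proposition~\ref{prop:N}(2) \emph{as stated} with $v=0$ is slightly awkward (the factor $\n^{(k+N_0)}(0)$ may vanish, and the right-hand side involves $\nr{0}_{\wZ}^{(3-k-N_0)_+}$), so what you really need --- and what the paper has in mind --- is the multilinear H\"older/Besov estimate established \emph{in the proof} of Proposition~\ref{prop:N} (your inequality~(\ref{ineq:2:prf:prop:N}) with the $\n^{(k)}$ factor stripped away), which bounds $\nr{w_\phi^{N_0}\prod_{\ell}\Gs G_\ell}_{\aY}$ directly; you acknowledge this later in your write-up, so this is only a matter of phrasing.
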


We immediately see that 
Lemma \ref{lem:W} follows from 
the proof of Proposition \ref{prop:N} 
and mathematical induction.

\begin{proof}[Proof of Theorem \ref{thm:main}]
We show only (2) since (1) can be proved more easily.
Assume $N\in \N$ with $N\ge 5$ and that 
we have already known the values of 
$\n^{(3)}(0),\cdots,\n^{(N-2)}(0)$. 
Choose $n\in \N$ and $\phi \in \mh$
so that $2n+4 \ge N$ 
and 
\begin{align*}
\int_{\R^{1+2}}
w_\phi^{N+1}
d(t,x)
\neq 0.
\end{align*} 
Let $\lambda>0$ be sufficiently small.
Using Lemma \ref{lem:A:1},
we have 
\begin{align*}
\abs{
K^{\phi,\lambda}
-
\int_{\R^{1+2}}
A_n(\lambda \phi)w_\phi
d(t,x)
}
\lesssim_{n,\nr{\phi}}
\lambda^{2n+5}.
\end{align*}
Hence we obtain
\begin{align*}
\abs{
\lambda^{-N}\varDelta_\lambda^N
K^{\phi,\lambda}
-
\lambda^{-N}\varDelta_\lambda^N
\int_{\R^{1+2}}
A_n(\lambda \phi)w_\phi
d(t,x)
}
\lesssim_{N,n,\nr{\phi}}
\lambda^{2n+5-N}
\lesssim_{N,n,\nr{\phi}}
\lambda.
\end{align*} 
We see from 
Lemmas \ref{lem:A:2} and \ref{lem:difference} that 
\begin{align*}
&
\left|
\lambda^{-N}\varDelta_\lambda^N K^{\phi,\lambda}
-
\n^{(N)}(0)
\int_{\R^{1+2}}
w_\phi^{N+1}
d(t,x)
\right.
\\
&\qquad
\left.
-
\int_{\R^{1+2}}
W_N^\phi
\brac{
\n^{(3)}(0),\cdots,\n^{(N-2)}(0)
}
w_\phi
d(t,x)
\right|
\\
& =
 \abs{
\lambda^{-N}\varDelta_\lambda^N K^{\phi,\lambda}
-
\int_{\R^{1+2}}
\widetilde{W}_N^\phi 
\brac{
\n^{(3)}(0),\cdots,\n^{(N)}(0)
}
w_\phi d(t,x)
 }
\\
& \le
\abs{
\lambda^{-N}\varDelta_\lambda^N K^{\phi,\lambda}
-
\lambda^{-N}\varDelta_\lambda^N
\int_{\R^{1+2}}
A_n(\lambda \phi)w_\phi
d(t,x)
}
\\
&\quad +
\abs{
\lambda^{-N}\varDelta_\lambda^N
\int_{\R^{1+2}}
A_n(\lambda \phi)w_\phi
d(t,x)
-
\frac{d^N}{d \lambda^N}
\int_{\R^{1+2}}
A_n(\lambda \phi)w_\phi
d(t,x)
}
\\
&\quad +
\abs{
\frac{d^N}{d \lambda^N}
\int_{\R^{1+2}}
A_n(\lambda \phi)w_\phi
d(t,x)
-
\int_{\R^{1+2}}
\widetilde{W}_N^\phi 
\brac{
\n^{(3)}(0),\cdots,\n^{(N)}(0)
}
w_\phi d(t,x)
}
\\
& \lesssim_{N,n,\nr{\phi}} \lambda,
\end{align*}  
which completes the proof of (2).
\end{proof}

\begin{proof}[Proof of Theorem \ref{thm:main:2}]
We prove (\ref{est:thm:main:2})
by mathematical induction.
If $N=3,4$ and $\Lambda_N^\prime>0$ is sufficiently small, 
then (\ref{est:thm:main:2}) follows from 
Theorem \ref{thm:main}(1).

Henceforth,
we assume that 
some $N\ge 4$ and $\Lambda_N^\prime>0$
satisfy (\ref{est:thm:main:2}).
We abbreviate 
$
K^\lambda
=
\lambda^{-N-1} \varDelta_\lambda^{N+1}
K^{\phi[N+1],\lambda},
$
$w=w_{\phi_{N+1}}$ 
and 
$
W[y_3,\cdots,y_{N-1}]
=
W^{\phi[N+1]}_{N+1}[y_3,\cdots,y_{N-1}]
$
($y_3,\cdots,y_{N-1} \in \R$).
Then 
we have for any $\lambda\in (0,1]$,
\begin{align*}
&
\abs{
\n^{(N+1)}(0)-I^\lambda_{N+1}
}
\\
&=
\abs{
\n^{(N+1)}(0)
-
\frac{
\displaystyle{
K^\lambda
-
\int_{\R^{1+2}}
W
\brac{
I^{\lambda}_3,\cdots,I^{\lambda}_{N-1}
}
w
d(t,x)
}
}{
\displaystyle{
\int_{\R^{1+2}}
w^{N+2}
d(t,x)
}
}
}
\\
&\le 
\abs{
\n^{(N+1)}(0)
-
\frac{
\displaystyle{
K^\lambda
-
\int_{\R^{1+2}}
W
\brac{
\n^{(3)}(0),\cdots,\n^{(N-1)}(0)
}
w
d(t,x)
}
}{
\displaystyle{
\int_{\R^{1+2}}
w^{N+2}
d(t,x)
}
}
}
\\
&\quad +
\abs{
\frac{
\displaystyle{
\int_{\R^{1+2}}
W
\brac{
\n^{(3)}(0),\cdots,\n^{(N-1)}(0)
}
w
d(t,x)
-
\int_{\R^{1+2}}
W
\brac{
I^{\lambda}_3,\cdots,I^{\lambda}_{N-1}
}
w
d(t,x)
}
}{
\displaystyle{
\int_{\R^{1+2}}
w^{N+2}
d(t,x)
}
}
}
\\
&=:a_1+a_2.
\end{align*} 
We see from 
Theorem \ref{thm:main}(2) that 
$a_1 \lesssim_{N+1,\phi_{N+1}}\lambda$ 
if $\lambda\in (0,\Lambda_{N+1})$.
By Lemma \ref{lem:W},
we obtain for some 
$\Lambda_{N+1}^{\prime\prime}>0$,
\begin{align*}
a_2
&\lesssim_{N+1,\phi_{N+1}} 
\abs{
\int_{\R^{1+2}}
w
\braa{
W
\brac{
\n^{(3)}(0),\cdots,\n^{(N-1)}(0)
}
-
W
\brac{
I^{\lambda}_3,\cdots,I^{\lambda}_{N-1}
}
}
d(t,x)
}
\\
&\lesssim_{N+1,\phi_3,\cdots,\phi_{N+1}}
\sum_{n=3}^{N+1}
\abs{
\n^{(n)}(0)
-
I^\lambda_n
}
\\
&\lesssim_{N+1,\phi_3,\cdots,\phi_{N+1}}
\lambda,
\quad
\lambda \in (0,\Lambda_{N+1}^{\prime\prime}).
\end{align*} 
Therefore, 
it follows that 
for any $n=3,\cdots,N+1$,
\begin{align*}
\abs{
\n^{(n)}(0)
-
I^\lambda_n
}
\lesssim_{N,\phi_3,\cdots,\phi_{N+1}} \lambda,
\quad
\lambda \in 
(0,
\Lambda_N^\prime \wedge
\Lambda_{N+1} \wedge \Lambda_{N+1}^{\prime\prime}),
\end{align*} 
which completes the proof.
\end{proof}

\begin{proof}[Proof of Theorem \ref{thm:main:3}]
Fix $N\in \N$.
By the standard argument, 
we have an open set $U\subset \mh$ 
containing 0 
such that $\s$ is $(N+2)$-th order 
G\^{a}teaux differentiable  
as a mapping from $U$ to $\mh$.

Put $\phi\in \mh$ and $\psi \in \Sc \oplus \Sc$ 
arbitrarily.
Since $\Sc\oplus\Sc$ is dense in $\mh$, 
we have only to show  
\begin{align}\label{id:1:prf:thm:main:3}
\bra{
d^1 \s(0;\phi)
,
\psi
}_{\mh}
=
\bra{
\phi,
\psi
}_{\mh}
\quad\text{and}\quad
\bra{
d^2 \s(0;\phi)
,
\psi
}_{\mh}
=
0
\end{align} 
and 
\begin{align}\label{id:2:prf:thm:main:3}
\bra{
d^N \s(0;\phi)
,
\psi
}_{\mh}
=
\bra{
\int_{\R}
\binom{
-\oi\sin(t\omega) 
}{
 \cos(t\omega)   
}
\wW_N^\phi[\n^{(3)}(0),\cdots,\n^{(N)}(0)]
dt
,
\psi
}_{\mh}
\end{align} 
if $N\ge 3$.

The G\^{a}teaux differentiability mentioned above 
and 
Lemma \ref{lem:difference} 
imply that 
\begin{align}\label{id:3:prf:thm:main:3}
\bra{
d^N \s(0;\phi)
,
\psi
}_{\mh}
=
\lim_{\lambda\to 0}\frac{d^N}{d\lambda^N}
\bra{
\s(\lambda \phi)
,
\psi}_{\mh}
=
\lim_{\lambda\to +0}
\lambda^{-N}\varDelta_\lambda^N
\bra{
\s(\lambda \phi)
,
\psi}_{\mh}.
\end{align} 
Assume $0< \lambda \ll 1$ 
and 
let $\widetilde\psi \in \Sc \oplus \Sc$ 
and $n\in \Z$ 
satisfy 
\begin{align*}
\psi
=
\Miijj{0}{-\omega^{-2}}{1}{0}
\widetilde\psi
\quad\text{and}\quad
2n+4\ge N.
\end{align*} 
By (\ref{id:S}) 
and the proof of (\ref{id:K}), 
we obtain 
\begin{align*}
\bra{
\s(\lambda \phi)
,
\psi}_{\mh}
&=
\bra{
\lambda \phi
+
\int_{\R}
\binom{
-\oi\sin(t\omega) 
}{
 \cos(t\omega)   
}
\n(u_{\lambda \phi}(t))
dt
,
\psi}_{\mh}
\\
&=
\lambda 
\bra{
\phi
,
\psi}_{\mh}
+
\int_{\R^{1+2}} 
\n(u_{\lambda \phi})
w_{\widetilde{\psi}}d(t,x)
\\
&=
\lambda 
\bra{
\phi
,
\psi}_{\mh}
+
\int_{\R^{1+2}} 
A_n(\lambda \phi)
w_{\widetilde{\psi}}d(t,x)
+
\int_{\R^{1+2}} 
\braa{
\n(u_{\lambda \phi})
-
A_n(\lambda \phi)
}
w_{\widetilde{\psi}}d(t,x).
\end{align*} 
It follows from 
the H\"older inequailty,
(\ref{est:u})  
and Propositions 
\ref{prop:St}, \ref{prop:A:1}
that 
\begin{align*}
\abs{
\int_{\R^{1+2}} 
\n(u_{\lambda \phi})
w_{\widetilde{\psi}}d(t,x)
}
&\le 
\nr{
\n(u_{\lambda \phi})
}_{L^{4/3}L^{4/3}} 
\nr{
w_{\widetilde\psi}
}_{L^4 L^4}
\lesssim_{
\nr{\phi},\nr{\psi}
}
\lambda^3
\end{align*} 
and 
\begin{align*}
\abs{
\int_{\R^{1+2}} 
\braa{
\n(u_{\lambda \phi})
-
A_n(\lambda \phi)
}
w_{\widetilde{\psi}}d(t,x)
}
&\le 
\nr{
\n(u_{\lambda \phi})
-
A_n(\lambda \phi)
}_{L^{4/3}L^{4/3}} 
\nr{
w_{\widetilde\psi}
}_{L^4 L^4}
\\
&\lesssim_{
n,\nr{\phi},\nr{\psi}
}
\lambda^{2n+5},
\end{align*} 
which implies that 
\begin{align*}
&
\lim_{\lambda\to +0}
\lambda^{-1}\varDelta_\lambda^1
\bra{
\s(\lambda \phi)
,
\psi}_{\mh}
=
\bra{\phi,\psi}_{\mh}
\quad\text{and}\quad
\lim_{\lambda\to +0}
\lambda^{-2}\varDelta_\lambda^2
\bra{
\s(\lambda \phi)
,
\psi}_{\mh}
=
0,
\\
&
\lim_{\lambda\to +0}
\lambda^{-N}\varDelta_\lambda^N
\bra{
\s(\lambda \phi)
,
\psi}_{\mh}
=
\lim_{\lambda\to +0}
\lambda^{-N}\varDelta_\lambda^N
\int_{\R^{1+2}} 
A_n(\lambda \phi)
w_{\widetilde{\psi}}d(t,x),
\quad
N\ge 3.
\end{align*} 
Using (\ref{id:3:prf:thm:main:3}) 
and  
Lemmas \ref{lem:A:2}, 
\ref{lem:difference},
we have (\ref{id:1:prf:thm:main:3})
and 
\begin{align*}
\bra{
d^N \s(0;\phi)
,
\psi
}_{\mh}
&=
\lim_{\lambda\to 0}\frac{d^N}{d\lambda^N}
\int_{\R^{1+2}} 
A_n(\lambda \phi)
w_{\widetilde{\psi}}d(t,x)
\\
&=
\int_{\R^{1+2}}
\widetilde{W}_N^\phi 
\brac{
\n^{(3)}(0),\cdots,\n^{(N)}(0)
}
w_{\widetilde{\psi}}d(t,x),
\quad
N\ge 3.
\end{align*} 
Henceforth we assume $N\ge 3$.
We see from 
Remark \ref{rem:def W}(1) and 
Proposition \ref{prop:K} that 
\begin{align*}
\int_{\R^{1+2}}
&
\widetilde{W}_N^\phi 
\brac{
\n^{(3)}(0),\cdots,\n^{(N)}(0)
}
w_{\widetilde{\psi}}d(t,x)
\\
&=
\bra{
\int_{\R}
\binom{
-\oi\sin(t\omega) 
}{
 \cos(t\omega)   
}
\wW_N^\phi[\n^{(3)}(0),\cdots,\n^{(N)}(0)]
dt
,
\psi
}_{\mh}.
\end{align*} 
Hence (\ref{id:2:prf:thm:main:3})  holds true.
\end{proof}

\appendix
\section{Proof of (\ref{id:rem:1:thm:main:3})}
Set $\phi\in \mh$ and $N\in \N$ with $N\ge 3$.
By Theorem \ref{thm:main:3}, 
we have only to show that 
\begin{align}\label{id:1:prf:rem:1:thm:main:3}
\wW_N^\phi[a,0,\cdots,0]
=
\left\{
  \begin{array}{cl}
\displaystyle{
a^{(N-1)/2}
\mathcal{W}_N^\phi
}
&\text{if $N$ is odd and $N\ge 3$,}\\
0
&\text{if $N$ is even and $N\ge 4$.}
  \end{array}
\right.
\end{align} 
It is clear that the above identity with $N=3,4$ is true.
It follows from (\ref{def:W}) that 
\begin{align}\label{id:2:prf:rem:1:thm:main:3}
\wW^\phi_N[a,0,\cdots,0]
&=
\sum_{k=1}^{3 \wedge k_N}
\frac{N!}{k!}
\sum_{
 \substack{ 
 N_1+\cdots+N_k=N-3+k
 \\
 N_1,\cdots,N_k\ge 3
 }
}
\frac{a w_\phi^{3-k} }{(3-k)!}
\prod_{\ell=1}^k
\frac{
\Gs
\widetilde{W}^\phi_{N_\ell}[a,0,\cdots,0]
}{ N_\ell ! },
\quad
N\ge 5.
\end{align}
We see from 
(\ref{id:2:prf:rem:1:thm:main:3}) and 
mathematical induction that 
(\ref{id:1:prf:rem:1:thm:main:3})
is true when $N$ is even.
Henceforth, 
we put $n\in \N$.
Then we have 
$3 \wedge k_{2n+3}=n \wedge 3$ and 
\begin{align*}
\wW^\phi_{2n+3}[a,0,\cdots,0]
&=
\sum_{k=1}^{n \wedge 3}
\frac{(2n+3)!}{k!}
\sum_{
 \substack{ 
 N_1+\cdots+N_k=2n+k
 \\
 N_1,\cdots,N_k\ge 3
 \\
 N_1,\cdots,N_k \in \mathbb{O}
 }
}
\frac{a w_\phi^{3-k} }{(3-k)!}
\prod_{\ell=1}^k
\frac{
\Gs
\widetilde{W}^\phi_{N_\ell}[a,0,\cdots,0]
}{ N_\ell ! }
\\
&=
\sum_{k=1}^{n \wedge 3}
\frac{(2n+3)!}{k!(3-k)!}
\sum_{
 n_1+\cdots+n_k=n-k
}
a w_\phi^{3-k}
\prod_{\ell=1}^k
\frac{
\Gs
\widetilde{W}^\phi_{2n_\ell +3}[a,0,\cdots,0]
}{ (2n_\ell +3) ! }
,
\end{align*}
where $\mathbb{O}$ is the set of all odd integers.
If 
$
\wW^\phi_{2m+3}[a,0,\cdots,0]
=a^{m+1}\mathcal{W}_{2m+3}^\phi
$
for any $m\in \Z$ with $m<n$,
then 
we obtain
\begin{align*}
\wW^\phi_{2n+3}[a,0,\cdots,0]
&=
\sum_{k=1}^{n \wedge 3}
\frac{(2n+3)!}{k!(3-k)!}
\sum_{
 n_1+\cdots+n_k=n-k
}
a w_\phi^{3-k}
\prod_{\ell=1}^k
\frac{
a^{n_\ell+1}
\Gs
\mathcal{W}_{2n_\ell +3}^\phi
}{ (2n_\ell +3) ! }
\\
&=
a^{n+1}
\sum_{k=1}^{n \wedge 3}
\frac{(2n+3)!}{k!(3-k)!}
\sum_{
 n_1+\cdots+n_k=n-k
}
w_\phi^{3-k}
\prod_{\ell=1}^k
\frac{
\Gs
\mathcal{W}_{2n_\ell +3}^\phi
}{ (2n_\ell +3) ! }
\\
&=
a^{n+1} 
\mathcal{W}_{2n +3}^\phi.
\end{align*} 
It follows from mathematical induction 
that (\ref{id:1:prf:rem:1:thm:main:3}) holds true 
for any $N$.

\bib
\end{document}